\documentclass{article}

\usepackage[xetex]{geometry}
\RequirePackage[hidelinks]{hyperref}

\title{Regularity of fractional Schrödinger equations and sub-Laplacian Fourier multipliers on the Heisenberg group}
\author{Aksel Bergfeldt}
\date{\today}

\RequirePackage{polyglossia}
\setmainlanguage{british}

\usepackage{biblatex}
\renewbibmacro{in:}{}
\DeclareFieldFormat{labelnumberwidth}{#1.}

\usepackage[math-style=ISO]{unicode-math}
\usepackage{fontspec}
\def\usemathfont{\fontspec{latinmodern-math.otf}}

\renewenvironment{abstract}{\small\noindent\textbf{Abstract:}\ }{\par\vspace{0em}}

\usepackage{sectsty}
\sectionfont{\mdseries}
\subsectionfont{\mdseries}
\subsubsectionfont{\mdseries\itshape}
\paragraphfont{\mdseries\itshape}

\removenolimits\int 
\usepackage{newunicodechar}
\newunicodechar{≤}{\leqslant}
\newunicodechar{≥}{\geqslant}
\def\capfigures{}

\def\dd{\symup d}
\def\de{\,\dd}
\def\eg{e.\,g.\@ }

\def\jap#1{\langle#1\rangle}
\def\BMO{\mathup{BMO}}
\def\tr{\mathup{tr}\mathop{}}
\def\ud#1{#1\hspace{-.1em}\text{\usemathfont\symbol{"0323}}\hspace{.1em}}
\def\argd{\hspace{.1em}{·}\hspace{.1em}}
\def\Schwartz{\mathscr S}
\def\Continuous{\mathscr C}
\def\Smooth{\Continuous^∞}

\def\supp{\mathop{\mathup{supp}}}

\def\combin#1#2{\bigg(\begin{array}{@{}c@{}}#1\\[-.2em]#2\end{array}\bigg)}
\def\sfrac#1#2{{\textstyle\frac#1#2}}
\def\floor#1{\lfloor#1\rfloor}
\def\ceil#1{\lceil#1\rceil}
\def\pref#1{(\ref{#1})}

\def\go{\boxplus}
\def\inpr#1{\left\langle#1\right\rangle}
\def\Rinpr#1{\inpr{#1}_{L^2(ℝ^d)}}
\def\Hinpr#1{\inpr{#1}_{L^2(ℍ^d)}}
\def\Hnorm#1{\left\|#1\right\|_{L^2(ℍ^d)}}
\def\FOERSTA{\hspace{-1.5em}}
\def\ANDRA{\phantom{=}\,\,}
\def\TERM{\hspace{1.5em}}
\def\db#1{\frac{\dd#1}#1}

\def\F{\mathscr F}
\def\G{\mathscr G}

\allowdisplaybreaks
\counterwithin{equation}{section}

\RequirePackage{amsthm}
\newtheoremstyle{uppraett}
  {\topsep}   % Space above
  {\topsep}   % Space below
  {\upshape}  % Body font
  {}       % Indent amount (empty value is the same as 0pt)
  {\bfseries} % Theorem head font
  {}          % Punctuation after theorem head
  {5pt plus 1pt minus 1pt} % Space after theorem head
  {\thmname{#1}\thmnumber{ #2}.\thmnote{ (#3)}} % Theorem head spec

\theoremstyle{uppraett} % För Arxiv
\newtheorem{theorem}{Theorem}[section]
\newtheorem{lemma}[theorem]{Lemma}
\newtheorem{corollary}[theorem]{Corollary}
\newtheorem{definition}[theorem]{Definition}

\begin{document}
\def\Re{\mathrm{Re}\,}
\def\Im{\mathrm{Im}\,}

\thispagestyle{empty}
\vspace*{-5mm}
\noindent
\begin{flushleft}
  \makeatletter%
  \fontsize{20}{24}\selectfont\@title%
  \makeatother%
\end{flushleft}
\vspace{.5em}\par%
\noindent\@author\ 2026-04-09%
\vspace{2.5em}\par%

\begin{abstract}
  We define functions of the sub-Laplacian $∆$ on the Heisenberg group $ℍ^d$ as Fourier multipliers. In this setting, we show that the solution $u$ of the free fractional Schrödinger equation $i∂_tu + (-∆)^νu = 0, u|_{t=0} = u_0$, for any $ν > 0$, satisfies the Hardy space estimate that 
  \[
    \|u(t,\argd)\|_{H^p(ℍ^d)} ≤ C_p (1 + t)^{Q|1/p-1/2|}\|(1-∆)^{νQ|1/p-1/2|}u_0\|_{H^p(ℍ^d)},
  \]
  with $Q = 2d + 2$, for all $p ∈ (0,∞)$, and the corresponding estimate with $p=∞$ in $\BMO(ℍ^d)$. This is done via a general regularity result for parameter dependent sub-Laplacian Fourier multipliers. We prove also that Bessel potential spaces on the Heisenberg group correspond to Sobolev spaces in the same way as in Euclidean space, also for Hardy spaces.
\end{abstract}

\bigskip
\noindent \textbf{MSC2020:}\ \ %
  35R03,\ \ % PDEs on Heisenberg groups, Lie groups, Carnot groups, etc.
  35B65,\ \ % Smoothness and regularity of solutions to PDEs 
  43A85. % Harmonic analysis on homogeneous spaces
  \\
\textbf{Keywords:}\ \ Schrödinger equation,\ Heisenberg group,\ Fourier multipliers

\medskip

\section{Introduction}
Let $ℍ^d$ denote the Heisenberg group of real dimension $2d + 1$. We shall use standard variable names  $v = (z,s) = (x, y, s) ∈ ℍ^d$ where $z = (x,y) ∈ ℝ^d × ℝ^d$ and $s∈ℝ$. The  group operation is 
\[
  (z,s)\go(z'\!,s')  = \left(z + z'\!,\ s + s' + \textstyle\frac12(y·x' - x·y')\right),
\]
and we denote the inverse $(z,s)^{-1} = (-z,-s)$ and the neutral element $0$.

The left-invariant vector fields are found by derivatives with right translations; $\lim_{t→0}(f(w\go tv)-f(w))/t$,
for some fixed $v∈ℍ^d$. With $tv$ we mean here the Euclidean scaling of $v$ with $t$ in $ℝ^{2d+1}$. 
If we take $v$ to be the coordinate units, this gives us the left-invariant vector fields $X = ∇_x + \frac12y∂_s$, $Y = ∇_y - \frac12x∂_s$ and $S = ∂_s$. 

With these vector fields we can construct the Heisenberg group sub-Laplacian $∆ = ∑_{i=1}^d ( X_i^2+Y_i^2)$. 
The present work studies the free fractional Schrödinger equations created with this sub-Laplacian;
\begin{equation*}
  \left\{\ 
    \begin{aligned}
      i∂_t u + (-∆)^νu &= 0\\
      u|_{t=0} &= u_0,
    \end{aligned} 
  \right.
\end{equation*}
for $ν > 0$ and where $u : [0,∞) × ℍ^d → ℂ$ is unknown and $u_0 : ℍ^d → ℂ$ is the initial data. The purpose of the study is to determine the change in regularity when going from initial data to solution of this problem. In other words, to count the number of bounded derivatives that are lost, in Lebesgue and Hardy spaces. To make sense of the equation for non-integer $ν$, we define functions of $∆$ using the Fourier transform on $ℍ^d$, see Section~\ref{Fourieranalysavdelning}.

The solution operator of the Schrödinger system above is given by a $t$-dependent sub-Laplacian Fourier multiplier, and hence this regularity statement is a special case of a more general question about the regularity of such operators. In this work we shall study the regularity properties of general multipliers $φ_t(-∆)$, for which the corresponding result for Schrödinger equations follows.

We shall take initial data in the Hardy space $H^p(ℍ^d)$ for $0 < p < ∞$  as well as in the space $\BMO(ℍ^d)$ of functions of bounded mean oscillation. For this range of function spaces we shall also show that Sobolev spaces created using the left-invariant vector fields on the Heisenberg group correspond to the description using Bessel potentials in the same way as in Euclidean space.

\subsection{Background and related work}
It is well known that the mapping of initial state to solution at a fixed time of the Schrödinger equation in Euclidean space is only $L^p$ bounded for $p = 2$, see L.~Hörmander in 1960 \cite{Hoermander}. General $L^p$ boundedness can however be achieved at the cost of regularity, and the full characterisation of this behaviour was subsequently given by A.~Miyachi in 1980 \cite{Miyachi}: The solution of the free fractional Schrödinger equation on $ℝ^d$ satisfies 
\[
  \|u(t,·)\|_{H^p(ℝ^d)} ≤ C_p(1 + t)^{d|1/p-1/2|}\big\|(1 - ∆_{ℝ^d})^{r/2}u_0\big\|_{H^p(ℝ^d)}\quad\text{if }r≥2νd\left|\textstyle\frac1p - \frac12\right|,
\]
for $0 < p < ∞$. For $p = ∞$, the corresponding condition holds instead for the space of functions of bounded mean oscillation. A fundamental property of the Bessel potential $(1-∆_{ℝ^d})^{r/2}$ here is that when $r$ is an integer, $\|(1 - ∆_{ℝ^d})^{r/2}u_0\|_{H^p(ℝ^d)}$ is estimated from above and below by $∑_{|α|≤r}\|∂^αu_0\|_{H^p(ℝ^d)}$, and the estimate above hence counts the number of lost derivatives when comparing the solution at time $t$ to the initial state.

\paragraph{Schrödinger equations}
Regarding the generalisation of the Schrödinger equation estimate to groups other than Euclidean space, we have recently seen great activity towards showing boundedness of the operator
\[
  T_t = (1 + t)^{-Q|1/p-1/2|}e^{it(-∆)^ν}(1 - ∆)^{-νQ|1/p-1/2|}
\]
in settings such as homogeneous groups, or even homogeneous spaces, see \cites{BuiDN, ChenDLY-Lp, BuiL, ChenDLY-Hp, BuiDD}. In the non-fractional case $ν = 1$, the boundedness of this operator on Lebesgue and Hardy spaces has been established in this general setting. The constant $Q$ above is the homogeneous dimension of the space, and $∆$ is a chosen sub-elliptic operator. Together with invertibility of the Bessel potential $(1 - ∆)^{νQ|1/p-1/2|}$, boundedness of this operator implies the generalisation of Miyachi's result. For fractional Schrödinger flows on the Heisenberg group, the strongest results are from three recent papers: R.~Bramati, P.~Ciatti, J.~Green and J.~Wright \cite{BramatiCGW}, showed that the fixed time operator
\[
   e^{i(-∆)^ν}(-∆)^{-r/2}χ^\mathup H(-∆),
\]
where $χ^\mathup H(-∆)$ is a high frequency cut-off, is bounded on the Lebesgue spaces $L^p(ℍ^d)$ for $1 < p < ∞$ if and only if $r ≥ Qν|1/p-1/2|$. The same year, T.~A.~Bui, Q.~Hong and G.~Hu \cite {BuiHH} showed that the operator
\[
  (1 + t)^{-Q|1/p-1/2|}e^{it(-∆)^ν}(-∆)^{-Q|1/p - 1/2|}χ^\mathup H(-∆)
\]
is bounded on $H^p(ℍ^d)$ for all $p ∈ (0,∞)$ and all $t$, and that the full operator $T_t$ above is bounded on $H^p(ℍ^d)$ for all $p ∈ (0,1)$ and all $t$. The year after, Bui, P.~D'Ancona and X.~T.~Duong \cite{BuiDD} showed the boundedness of $T_t$ on the Hardy spaces $H^p(ℍ^d)$ with $p ∈ (0,1]$ in the cases when $ν ∈ (0,1/2]$, as well as on $L^p(ℍ^d)$ with $p ∈ (1,∞)$ for all $ν$.

In the present work, we shall show the boundedness on $H^p(ℍ^d)$ for all $p ∈ (0,∞)$ and all $t$ of a class of operators that include all of the above. This includes in particular the boundedness on $H^1(ℍ^d)$ for operators with $ν > 1/2$, which has been missing in the literature so far. We shall also show the invertibility of Bessel potentials that implies the boundedness of the Schrödinger equation solution $u(t,·)$ such as in Miyachi's estimate. 

The methods of the cited accomplishments above rely primarily on the behaviour of the heat kernel associated to the sub-Laplacian, and are quite different from the approach of the present work. Here we shall instead build the analysis entirely upon the Fourier expansion of functions and operators on $ℍ^d$. This viewpoint has been elaborated by H.~Bahouri, J.-Y.~Chemin and R.~Danchin \cite{BahouriCD-Tempered, BahouriCD-Frequency}, where they put emphasis on how the frequency space decomposes into eigenspaces of harmonic oscillator Hamiltonians. The same techniques were also recently used by Bahouri and I.~Gallagher \cite{BahouriG-Local_dispersive} to form the Schrödinger flow, derive an explicit form of the Schrödinger kernel and study the dispersive properties of the Schrödinger equation on the Heisenberg group.

For the methods in this work, we shall be concerned mainly with the diagonal form of this expansion, which corresponds to a form of radiality of functions or distributions, and fully describes operators built as functions of the sub-Laplacian.

\paragraph{Sobolev space description}
For the Lebesgue spaces with $1 < p < ∞$, it was shown by G.~B.~Folland in 1975 \cite{Folland-Subelliptic} that the Sobolev space norm $∑_{|I|≤n}\|D^I\argd\|_{L^p(ℍ^d)}$ is for integer $|I|$ equivalent to the Bessel potential norm $\|(1-∆)^{n/2}\argd\|_{L^p(ℍ^d)}$. Here $D^I$ is a concatenation of left-invariant vector fields as in Theorem \ref{Besselpotentials} below. For Hardy spaces with $0 < p ≤ 1$ and for BMO, this fundamental correspondence does however so far appear to be missing in the literature. We shall derive this property as a consequence of a general multiplier theorem.

\paragraph{The half wave equation}
For the case $ν = 1/2$, the fractional Schrödinger equation becomes the half wave equation, and in this particular case it is known that the range of allowed $r$ of Miyachi above is not optimal. This is also the case for the corresponding equation on the Heisenberg group, as shown by D.~Müller and E.~M.~Stein \cite{MuellerStein}, with sharp results by Müller and A.~Seeger \cite{MuellerSeeger}.

\subsection{Main results}

In this study, we shall show the equivalent of Miyachi's estimate on the Heisenberg group. The main result is the following theorem, which mimics the Euclidean case, with the important difference that the dimension of space is replaced with the homogeneous dimension $Q = 2d + 2$.

\begin{theorem}\label{pd-s}
  For all $p ∈ (0,∞)$, the solution $u$ of the free fractional Schrödinger equation on $ℍ^d$, as above, satisfies
  \[
    \|u(t,\argd)\|_{H^p(ℍ^d)} ≤ C_p (1 + t)^{Q|1/p\, -\, 1/2|} \left\|(1 - ∆)^{νQ|1/p - 1/2|}u_0\right\|_{H^p(ℍ^d)},
  \]
  while for the $p = ∞$ case we have
  \[
    \|u(t,·)\|_{\BMO(ℍ^d)} ≤ C(1 + t)^{Q/2} \|(1-∆)^{νQ/2}u_0\|_{\BMO(ℍ^d)}.
  \]
\end{theorem}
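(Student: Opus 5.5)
The plan is to write the solution operator as a sub-Laplacian Fourier multiplier and reduce Theorem~\ref{pd-s} to a parameter-dependent multiplier theorem. By the Fourier definition of functions of $∆$ (Section~\ref{Fourieranalysavdelning}), $u(t,\argd) = e^{it(-∆)^ν}u_0$. Setting $r = νQ|1/p-1/2|$ and $f = (1-∆)^{r}u_0$, we have
\[
  u(t,\argd) = φ_t(-∆)f, \qquad φ_t(λ) = e^{itλ^ν}(1+λ)^{-r}.
\]
It then suffices to prove $\|φ_t(-∆)\|_{H^p→H^p} ≤ C_p(1+t)^{Q|1/p-1/2|}$, with the analogous bound on $\BMO$ for $p=∞$ and exponent $Q/2$. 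The composition identity $e^{it(-∆)^ν}(1-∆)^{-r}(1-∆)^{r} = e^{it(-∆)^ν}$ holds at the level of Fourier multipliers. This is immediate on the diagonal (radial) part of the Fourier expansion, since functions of $∆$ act there by scalars indexed by the Hermite level and the frequency.

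The core step is a general regularity result for multipliers $φ_t(-∆)$. The hypothesis would be Miyachi-type symbol bounds
\[
  |∂_λ^k φ_t(λ)| ≤ C_k\, A\, (1+λ)^{-r}\, (tλ^{ν-1}+λ^{-1})^{k}\,(\ldots)
\]
after dyadic localisation. I would use a Littlewood–Paley decomposition $φ_t = ∑_j φ_tψ(2^{-j}\argd)$ and treat each dyadic piece $K_j$ as a convolution kernel on $ℍ^d$. The aim is to estimate $K_j$ in weighted $L^2$ via Plancherel on $ℍ^d$, with weights that are powers of the homogeneous norm. On the diagonal part of the expansion, multiplication by $|z|^2$ and by $s$ corresponds to difference and derivative operators in the Hermite index and in the frequency variable $λ$. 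Consequently, symbol derivatives control the kernel's spatial decay at scale $2^{j(ν-1/2)}t$ roughly, instead of $2^{-j/2}$.

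With these kernel bounds I would obtain two endpoint estimates. First, an $H^1→L^1$ estimate (in fact $H^p→H^p$ for $p≤1$, via atoms and molecules) for $r = νQ/2$ with growth $(1+t)^{Q/2}$, proved by the Miyachi-style split of an atom at radius $ρ$. Near the atom's support, Cauchy–Schwarz and Plancherel on a ball of the enlarged radius $\max(ρ, tρ^{1-2ν})$ cost exactly $Q/2$ powers. Far from the support, the weighted kernel estimate and the atom's cancellation give the decay. Second, the trivial $L^2$ bound. Complex interpolation between the analytic family $e^{it(-∆)^ν}(1-∆)^{-z}$ on Hardy spaces (Fefferman–Stein/Calderón–Torchinsky type, valid on homogeneous groups), together with duality $H^1$–$\BMO$, gives all $p∈(1,∞]$. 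For $p<1$ the atomic argument is repeated with higher moment cancellation, using the Taylor formula on $ℍ^d$ (Folland–Stein). This requires kernel bounds with more derivatives, namely left-invariant vector fields applied to $K_j$, which again translate into polynomial-in-frequency factors.

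The main obstacle will be the weighted kernel estimate. I must show that multiplying the kernel by $|z|^{2a}$ or $|s|^{b}$ corresponds, on the Fourier side, to differences and derivatives that the symbol bounds control. The subtlety is the interplay between the Hermite level $n$ and $λ$ through the eigenvalue $(2n+d)|λ|$. I would first establish the formulae for $\partial_λ$ and the Hermite differences acting on diagonal Fourier coefficients, as in Bahouri–Chemin–Danchin. I would then check that after dyadic localisation each such operation costs at most a factor $t2^{j(ν-1)}+2^{-j}$ times the appropriate power of $2^{j/2}$. The boundary region of small $λ$ (low frequencies, $j<0$) needs separate handling but is harmless because $(1+λ)^{-r}$ is smooth there.
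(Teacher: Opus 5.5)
Your architecture is the paper's. You reduce to a parameter-dependent multiplier bound and split each atom into near and far regions. You control the far region by weighted $L^2$ norms of the kernel through the biradial Plancherel formula, use moments and a Taylor expansion for $p<1$, interpolate between $H^1$ and $L^2$, and dualise.

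\textbf{The weighted kernel estimate.} The step you postpone as ``the main obstacle'' is the heart of the proof, and your plan for it fails as stated. The biradial coefficient of the kernel of $\varphi(-\Delta)$ is $\varphi(|\lambda|(d+2n))$. A $\lambda$-derivative therefore produces $(d+2n)\varphi'(\sigma)=\frac{\sigma}{|\lambda|}\varphi'(\sigma)$, which no symbol bound controls at fixed $\sigma$ as $|\lambda|\to0$. Already for $\varphi(\sigma)=e^{-\sigma}$ the sum $\sum_n n^{d-1}\int|\lambda|^d(d+2n)^2e^{-2|\lambda|(d+2n)}\,d\lambda$ diverges. So you cannot bound the $\lambda$-derivative and the Hermite differences separately. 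The weights correspond to combinations in which these bad terms cancel, and the cancellation must survive iteration to high powers.

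The missing idea is Lemma~\ref{nyckelhs}, which uses the single weight $is-\frac14|z|^2$, whose modulus is comparable to $|v|^2$. For $\lambda>0$ one has $\G[(is-\frac14|z|^2)f](n,\lambda)=\partial_\lambda\G f(n,\lambda)-\frac n\lambda\big(\G f(n,\lambda)-\G f(n-1,\lambda)\big)$. Using $\partial_\lambda=\frac{2n+d}{2\lambda}\partial_n$ on functions of $\lambda(d+2n)$ and Taylor expanding in $n$ cancels the $2n\varphi'$ terms. What remains is $d\varphi'+4n\lambda\int_0^1(1-\theta)\varphi''\,d\theta$. To iterate you need a class of expressions closed under this operation (the $g_{j,k,\ell,p}$). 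That yields $\sum_\ell\sup|(n\lambda)^\ell\partial^{m+\ell}\varphi|$, which the symbol hypothesis controls since $n\lambda\lesssim\sigma$.

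Likewise, vector fields on $K_j$ do not simply become polynomial-in-frequency factors once weights are present. The function $D^KK_j$ is not biradial, and $D_i$ acts non-diagonally on the Fourier side. You need a reduction such as Lemma~\ref{stora polynomderivatshs} before the biradial Plancherel formula applies, and Lemma~\ref{utbytarhs} to move $D^I$ past the Littlewood--Paley operator.

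\textbf{The $p=1$ endpoint.} You do not address it. The $p=1$ case and, by duality, the $\BMO$ case need $H^1\to H^1$, not $H^1\to L^1$, since the latter only dualises to $L^\infty\to\BMO$. This is exactly where the paper's atomic/square-function argument breaks. The gain in the atom radius is $Q(1/p-1)$, which vanishes at $p=1$, so the paper needs an extra splitting along the frequency-dependent balls $b_{\ell,r}$.

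Your fixed radius $\max(\rho,t\rho^{1-2\nu})$ helps in the far region. However, Plancherel on that ball only handles the part of the atom at frequencies $\gtrsim\rho^{-2}$. For large $\nu$, the part of $Ta$ at frequencies $\sigma\lesssim1$ has $L^2$ norm of order $\rho$, coming from one moment of cancellation. Then $|B^*|^{1/2}\rho\approx t^{Q/2}\rho^{Q/2-\nu Q+1}$, which is unbounded as $\rho\to0$ once $\nu>1/2+1/Q$. So you still need frequency-dependent regions together with cancellation, and you need them for the $H^1$ quasinorm rather than only $L^1$. ``Atoms and molecules'' does not yet supply this.
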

We give the definition of the Hardy and BMO spaces on $ℍ^d$ in Section \ref{Hardyrumsavdelning}. 
Just as in the Euclidean case, the Bessel potentials $(1 - ∆)^{r/2}$ are formed by a form of Fourier multiplier, and our next theorem shows that these count the number of bounded derivatives in the expected way.

\begin{theorem}\label{Besselpotentials}
  For all $p ∈ (0,∞)$ and $n ∈ ℕ$ there are constants $c_p,C_p > 0$ such that for all $f ∈ H^p(ℍ^d)$,
  \begin{align*}
    c_p\|(1 - ∆)^{n/2}f\|_{H^p(ℍ^d)} &≤ ∑_{|I| ≤ n} \|D^If\|_{H^p(ℍ^d)} ≤ C_p\|(1 - ∆)^{n/2}f\|_{H^p(ℍ^d)},
  \end{align*}
  and likewise for $\BMO(ℍ^d)$.
\end{theorem}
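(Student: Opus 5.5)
The plan is to reduce both inequalities of Theorem~\ref{Besselpotentials} to the boundedness on $H^p(ℍ^d)$ and $\BMO(ℍ^d)$ of a few sub-Laplacian Fourier multipliers and of the Riesz-type operators $D^I(-∆)^{-|I|/2}$ and $D^I(1-∆)^{-|I|/2}$. For these we use the general multiplier theorem that the paper develops for $φ(-∆)$: a Mikhlin–Hörmander type result for symbols $φ$ that are smooth on $(0,∞)$ and satisfy $|λ^kφ^{(k)}(λ)| ≤ C_k$ (or a symbol class of order $0$ in $\jap λ^{1/2}$), with bounds on $H^p$ for $0<p<∞$ and on $\BMO$.

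\textbf{Upper bound, $∑_{|I|≤n}\|D^If\|_{H^p} ≤ C_p\|(1-∆)^{n/2}f\|_{H^p}$.} Write $D^If = \bigl[D^I(1-∆)^{-|I|/2}\bigr]\,(1-∆)^{-(n-|I|)/2}\,(1-∆)^{n/2}f$. The factor $(1-∆)^{-(n-|I|)/2}$ is a multiplier with symbol $(1+λ)^{-m/2}$, $m≥0$, which satisfies the symbol estimates, so it is bounded. It remains to show that $R_I = D^I(1-∆)^{-k/2}$, with $k=|I|$, is bounded on $H^p$ and $\BMO$. This operator is left-invariant, so its action on the $\BMO$/$H^p$ dual pair transfers. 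Its kernel is $D^I$ applied to the Bessel kernel $G_k$ of $(1-∆)^{-k/2}$. We show that $G_k$ is smooth away from $0$, behaves like a homogeneous kernel of degree $k-Q$ near $0$ (with rapid decay at infinity), and that $D^IG_k$ is a Calderón–Zygmund kernel satisfying all order derivative bounds $|D^J(D^IG_k)(v)| ≲ |v|^{-Q-|J|}$. We derive these kernel bounds from the Fourier representation in the Laguerre/Hermite diagonal basis, in the same way the general multiplier theorem is proved, or by subordination to the heat kernel. Boundedness on $H^p$ for small $p$ then follows from the molecular/atomic decomposition: a smooth CZ kernel with $L^2$ boundedness maps atoms to molecules once enough vanishing moments are available. $L^2$ boundedness holds because $\|X_jg\|_2^2+\|Y_jg\|_2^2$ is controlled by $\langle -∆ g,g\rangle$, and iterating with commutators gives $\|D^Ig\|_2 ≲ \|(1-∆)^{k/2}g\|_2$.

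\textbf{Lower bound.} For even $n = 2m$, $(1-∆)^m f$ expands as a finite sum of terms $D^If$ with $|I|≤n$, since $∆=∑(X_i^2+Y_i^2)$; the inequality is immediate. For general $n$, write
\[
(1-∆)^{n/2} = (1-∆)^{n/2-\ceil{n/2}}(1-∆)^{\ceil{n/2}-\floor{n/2}}(1-∆)^{\floor{n/2}}.
\]
When $n$ is odd, we instead use the identity
\[
(1-∆)^{1/2} = (1-∆)^{-1/2}\Bigl(1 - ∑_i X_i^2 - ∑_i Y_i^2\Bigr) = (1-∆)^{-1/2} - ∑_i \bigl[(1-∆)^{-1/2}X_i\bigr]X_i - ∑_i\bigl[(1-∆)^{-1/2}Y_i\bigr]Y_i.
\]
Together with the even case, this writes $(1-∆)^{n/2}f$ as a sum of operators of the form $(1-∆)^{-1/2}Z$ applied to $D^If$ with $|I|≤n$, where $Z$ is a vector field or the identity. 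The operators $(1-∆)^{-1/2}X_i$ are adjoints (transposes) of the Riesz operators $X_i(1-∆)^{-1/2}$ and have kernels of the same CZ type, so the upper bound argument applies to them too.

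\textbf{Main obstacle.} The hard step is the Hardy space boundedness of $D^I(1-∆)^{-|I|/2}$ for $p≤1$ and on $\BMO$. This needs the smooth CZ kernel estimates to arbitrary order, together with the non-commutativity of the $D^I$, which requires careful use of left versus right invariance in the molecule argument. I would first try to obtain the kernel bounds directly from the paper's multiplier theorem machinery: estimate the kernel of $φ(-∆)$ dyadically in frequency, then apply $D^I$, which at frequency $2^j$ costs a factor $2^{j|I|/2}$ that is exactly absorbed by the symbol $(1+λ)^{-|I|/2}$.
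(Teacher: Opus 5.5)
Your argument is correct, and its skeleton matches the paper's. The upper bound is reduced to boundedness of $D^I(1-\Delta)^{-|I|/2}$; the extra factor $(1-\Delta)^{-(n-|I|)/2}$ is a harmless Mihlin multiplier. For odd $n$, your identity $(1-\Delta)^{1/2}=(1-\Delta)^{-1/2}\bigl(1-\sum_i E_i^2\bigr)$ is just a rewriting of the paper's factorisation $\sum_i(1-\Delta)^{-1/2}\bigl(\frac{1}{\sqrt{2d}}+E_i\bigr)\bigl(\frac{1}{\sqrt{2d}}-E_i\bigr)(1-\Delta)^{(n-1)/2}$.

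The real difference is the step you single out as the main obstacle; in the paper it costs nothing. Theorem \ref{allmaenna s} is stated for $D^I\varphi_t(-\Delta)D^{I'}$, not only for $\varphi_t(-\Delta)$. With $\gamma=t=0$ its hypothesis is $|\partial^m\varphi(\sigma)|\lesssim(1+\sigma)^{-|K|/2}\sigma^{-m}$, which $\varphi(\sigma)=(1+\sigma)^{-n/2}$ satisfies whenever $|K|\le n$.
\begin{itemize}
\item Taking $K=I$ gives boundedness of $D^I(1-\Delta)^{-n/2}$.
\item Taking $I'=(i)$ and $\varphi(\sigma)=(1+\sigma)^{-1/2}$ gives boundedness of $(1-\Delta)^{-1/2}\bigl(\frac{1}{\sqrt{2d}}+E_i\bigr)$.
\end{itemize}
Both bounds hold on $H^p$ for all $0<p<\infty$ and on $\mathrm{BMO}$ at once. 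The vector fields are absorbed inside that proof, via Lemma \ref{utbytarhs}, Lemma \ref{stora polynomderivatshs} and the subsection on nonempty $I'$, so no pointwise kernel bounds are ever needed.

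Your route replaces this with classical Calder\'on--Zygmund theory:
\begin{itemize}
\item subordination of $D^IG_k$ to the heat kernel;
\item Gaussian bounds for left- and right-invariant derivatives of the heat kernel, giving $|\tilde D^{J'}D^JD^IG_k(v)|\lesssim|v|^{-Q-|J|-|J'|}$ near $0$ with rapid decay at infinity;
\item the Folland--Stein $H^p$ theory for such convolution operators, where right-invariant derivatives enter through the Taylor expansion in the atom variable, as you anticipate;
\item duality for $\mathrm{BMO}$.
\end{itemize}
This is valid and independent of the Laguerre calculus of Lemma \ref{nyckelhs}. The price is that it imports heat-kernel and molecular machinery from the literature, whereas in the paper the theorem is a short corollary of its own multiplier theorem.

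One minor inaccuracy: for $k\ge Q$ the Bessel kernel $G_k$ is not comparable near $0$ to a kernel homogeneous of degree $k-Q$. You never use this, though; only the bounds on $D^IG_k$ and its derivatives enter, and these hold for every $k$.
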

Here the multi-index $I$ is a list $(i_1, \ldots, i_k)$ for some $k$, with $i_j ∈ \{1, \ldots, 2d + 1\}$ and $|I| = ∑_{j=1}^k μ(i_j)$ with $μ(i) = 2$ if $i = 2d + 1$ and otherwise $μ(i) = 1$. To $I$ corresponds $D^I = D_{i_1}\cdots D_{i_k}$ where $D_i = X_i$ and $D_{d+i} = Y_i$ for $i = 1,\ldots,d$, and $D_{2d + 1} = S$. For example, when $n = 2$ this shows that the norms $\|f\|_{H^p(ℍ^d)}$, $\|X_if\|_{H^p(ℍ^d)}$, $\|Y_if\|_{H^p(ℍ^d)}$, $\|X_iX_jf\|_{H^p(ℍ^d)}$, $\|X_iY_jf\|_{H^p(ℍ^d)}$, $\|Y_iX_jf\|_{H^p(ℍ^d)}$, $\|Y_iY_jf\|_{H^p(ℍ^d)}$ and $\|Sf\|_{H^p(ℍ^d)}$, for all $1 ≤ i,j ≤ d$, are all estimated from above by $\|(1 - ∆)f\|_{H^p(ℍ^d)}$.

These two theorems follow from a more general theory about Fourier multipliers created as functions of the sub-Laplacian that we shall develop. The main result about these objects, which we specify in Definition \ref{def multiplikator} below, is the following theorem.

\begin{theorem}\label{allmaenna s}
  Let $φ_t ∈ \Continuous^∞([0,∞))$ with $t ∈ [0,∞)$ satisfy for some $p ∈ (0,∞)$, some $γ ≥ 0$ and some multi-index $K$ that
  \[
    |∂^mφ_t(σ)| ≤ C_m\,(1 + t)^m(1 + σ)^{-γQ|1/p - 1/2| - |K|/2}σ^{m(γ - 1)}
  \]
  for all $σ ∈ [0,∞)$ and all $m ∈ ℕ$. Then for any multi-indices $I,I'$ such that $K$ is the concatenation of $I$ and $I'$, it holds that for all $f ∈ H^p(ℍ^d)$,
  \[
    \|D^Iφ_t(-∆)D^{I'}f\|_{H^p(ℍ^d)} ≤ C_p(1 + t)^{Q|1/p - 1/2|}\|f\|_{H^p(ℍ^d)}.
  \]
  If the requirement above holds with $p = ∞$, we instead have for all $f∈\BMO(ℍ^d)$ that
  \[
    \|D^Iφ_t(-∆)D^{I'}f\|_{\BMO(ℍ^d)} ≤ C(1 + t)^{Q/2}\|f\|_{\BMO(ℍ^d)}.
  \]
\end{theorem}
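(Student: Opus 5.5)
We prove the estimate by following Miyachi's Euclidean method: split the operator dyadically in the spectrum of $-∆$, bound each piece's kernel in weighted $L^2$ using Plancherel on $ℍ^d$, and sum the pieces through the atomic decomposition of $H^p(ℍ^d)$. The BMO statement then follows by duality from the $H^1$ estimate, since the adjoint $D^{I'*}\overline{φ_t}(-∆)D^{I*}$ has the same form with $I$ and $I'$ reversed. The $L^2$ case is immediate from the spectral calculus: $\|D^Iφ_t(-∆)D^{I'}\|_{L^2\to L^2}\lesssim \sup_σ(1+σ)^{|K|/2}|φ_t(σ)|$, because $D^I(1-∆)^{-|I|/2}$ is $L^2$ bounded. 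For $1<p<∞$ we interpolate (complex interpolation in $t$-uniform analytic families) between $H^{p_0}$ with $p_0$ small and BMO. So the core is the case $0<p≤1$.

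Let $T=D^Iφ_t(-∆)D^{I'}$ and write $T=∑_{j≥0}T_j$ with $T_j=D^Iφ_t(-∆)ψ_j(-∆)D^{I'}$, where $ψ_0$ is a low-frequency cutoff and $ψ_j(σ)=ψ(2^{-j}σ)$ for $j\ge1$. This uses the dyadic scale of $σ$, i.e. frequency $2^{j/2}$ in the homogeneous scaling. The kernel $k_j$ of $T_j$ is a radial function (in the sense of the diagonal Fourier expansion) with spectral profile $m_j(σ)=σ^{|K|/2}φ_t(σ)ψ_j(σ)$. The derivatives in $D^I$ and $D^{I'}$ land on the kernel, and left- or right-invariant fields applied to it cost a factor $σ^{1/2}$ per degree.

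The central lemma is a weighted Plancherel estimate. For $N>Q/2$ and the homogeneous norm $|v|$,
\[
\| |v|^{N}k_j\|_{L^2(ℍ^d)}\lesssim 2^{jQ/4}\,2^{-jN/2}\sum_{m\le N}\sup_σ |(σ\partial_σ)^m m_j(σ)|\,2^{-jm\cdot 0},
\]
obtained from the Fourier analysis of Section~\ref{Fourieranalysavdelning}. Multiplication by $z$ and $s$ acts on the diagonal Fourier coefficients as difference operators in the Hermite index and as $\partial_λ$, and these correspond to derivatives in $σ=|λ|(2n+d)$. The hypothesis gives
\[
\sup_σ|\partial^m m_j|\lesssim (1+t)^m 2^{-jγQ|1/p-1/2|}2^{j(mγ-m)},
\]
so each $\partial_σ$ costs $(1+t)2^{j(γ-1)}$. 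We therefore split $k_j$ at radius $R_j=(1+t)2^{j(γ-1)/2}$, scaled appropriately. Inside the ball we use Cauchy–Schwarz with $\|k_j\|_2$. Outside it we use the weighted bound. Combining gives
\[
\|k_j\|_{L^1}\lesssim (1+t)^{Q/2}2^{jγQ/4}\cdot 2^{-jγQ|1/p-1/2|},
\]
which is the Miyachi-type kernel bound.

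For an $H^p$ atom $a$ supported in a ball of radius $r$ with $\int a\,v^α=0$ for $|α|$ up to the needed order, we estimate $Ta$ in two regions. On the enlarged ball $B^*$ of radius $\max(r,R_j)$ we use Hölder and the $L^2$ bound. Away from it we use the moment conditions together with a Taylor expansion on $ℍ^d$ of the kernel, costing $(r2^{j/2})^{M}$, and the weighted $L^2$ bound. Summing over $j$ separately for $2^{j/2}r\le1$ and $2^{j/2}r>1$ should yield $\|Ta\|_{H^p}\lesssim(1+t)^{Q(1/p-1/2)}$, once we also verify that $Ta$ is a molecule. The exponent bookkeeping is exactly Miyachi's, with $d$ replaced by $Q$.

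The main obstacle is the weighted Plancherel lemma: translating multiplication by $|v|^{2N}$ into differentiation of the radial profile in $σ$, with the correct loss in $j$. This requires controlling Laguerre difference operators and the $\partial_λ$ terms via the $(n,λ)$ diagonal description. I would first prove it for even integer $N$ with explicit Laguerre identities, then interpolate in $N$.
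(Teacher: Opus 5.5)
Your architecture is broadly the paper's: atoms, dyadic pieces in the spectrum of $-\Delta$, a weighted Plancherel bound for the biradial kernel via Laguerre identities, a Taylor expansion against the moments, and BMO by duality. Your central lemma is in substance Lemma~\ref{nyckelhs}.

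\textbf{The gap at $p=1$.} The genuine gap is the endpoint $p=1$, and an arithmetic slip hides it. Your own weighted lemma (prefactor $2^{-jN/2}$, operators $(\sigma\partial_\sigma)^m$) together with the hypothesis gives
\[ \big\| |v|^N k_j\big\|_{L^2}\lesssim 2^{jQ/4}\,2^{-j\gamma Q|1/p-1/2|}\big((1+t)2^{j(\gamma-1/2)}\big)^N . \]
So the spreading radius is $R_j\simeq(1+t)2^{j(\gamma-1/2)}$, not $(1+t)2^{j(\gamma-1)/2}$; this is the paper's choice $c=1-2\gamma$. Correspondingly $\|k_j\|_{L^1}\lesssim(1+t)^{Q/2}2^{j\gamma Q/2}2^{-j\gamma Q|1/p-1/2|}$, which at $p=1$ is bounded but does not decay in $j$.

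Take $2^{j/2}r>1$ and $R_j\ge r$. The part of $T_ja$ outside the ball of radius $\max(r,R_j)$, estimated by $\|a\|_{L^1}$ times the weighted kernel bound, is then only $O((1+t)^{Q/2})$ for each such $j$. These bounds cannot be summed over $j$ uniformly in $r$ and $t$. For $p<1$ the same computation carries the extra factor $(2^{j/2}r)^{-Q(1/p-1)}$, which is what saves you there. This is exactly the breakdown the paper identifies at the start of its $H^1$ subsection.

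The fix needs an additional idea: cut at a widened radius $R_j(2^{j/2}r)^{\theta}$ with $0<\theta<1$. The paper's cut-offs $b_{\ell,r}$ with $\beta=\gamma-1/4$ correspond to $\theta=1/2$.
\begin{itemize}
\item Inside the widened ball, Cauchy--Schwarz, the bound $\|T_ja\|_{L^2}\lesssim 2^{-j\gamma Q/2}\|a\|_{L^2}$ and $\|a\|_{L^2}\lesssim r^{-Q/2}$ give $(1+t)^{Q/2}(2^{j/2}r)^{-(1-\theta)Q/2}$.
\item Outside it, the weighted bound gives $(1+t)^{Q/2}(2^{j/2}r)^{-\theta(N-Q/2)}$.
\item On a ball of radius comparable to $(1+t)r$ around the atom, estimate the whole operator at once through $L^2$ boundedness or the square function, as the paper does. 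For small $\gamma$ the dyadic pieces there do not decay in $j$.
\end{itemize}
Your BMO and Lebesgue cases rest on $H^1$, so this step cannot be left to bookkeeping.

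\textbf{Further steps that need more than you indicate.}
\begin{itemize}
\item \emph{The kernel is not biradial.} The kernel of $D^I\phi_t(-\Delta)\psi_j(-\Delta)D^{I'}$ is obtained by applying left- and right-invariant derivatives to the biradial kernel of $(\phi_t\psi_j)(-\Delta)$. It therefore has no spectral profile $\sigma^{|K|/2}\phi_t\psi_j$, and the Laguerre identities do not apply to it directly. That each vector field costs $\sigma^{1/2}$ in a weighted $L^2$ norm is true, but it is precisely what Lemmas~\ref{polynomderivatshs} and~\ref{stora polynomderivatshs} establish. Weights do not commute with $D^I$, and general polynomial weights destroy biradiality. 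So $\|pD^If\|_{L^2}$ must be reduced to expressions in $(is-\frac14|z|^2)^i(-\Delta)^jf$; this special polynomial acts on the Laguerre coefficients as $\partial_\lambda$ plus a difference in $n$.
\item \emph{Further ingredients the paper uses.} It needs Lemma~\ref{utbytarhs} to move $D^I$ past the frequency localiser. It converts the right-invariant derivatives from the Taylor remainder into left-invariant ones for biradial functions. It reduces the $I'$ case separately through $\chi^{\mathrm L}(-\Delta)D^{I'}$ and $\chi^{\mathrm H}(-\Delta)(-\Delta)^{-|I'|/2}D^{I'}$.
\item \emph{A single low-frequency piece is not enough.} When $\gamma<1$ the hypothesis allows $|\partial^m\phi_t(\sigma)|\sim\sigma^{m(\gamma-1)}$ as $\sigma\to0$, as in the Mihlin case $\gamma=0$. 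Then $(\phi_t\psi_0)(-\Delta)$ has no rapidly decaying kernel. You need the homogeneous decomposition over all integers $j$, with the split by $\chi(\langle t\rangle\sigma^{\gamma})$ deciding whether a derivative costs $(1+t)\sigma^{\gamma-1}$ or $\sigma^{-1}$.
\item \emph{Interpolation.} Interpolating from $H^{p_0}$ with $p_0<1$ requires quasi-Banach interpolation that, as the paper remarks, is not available on the Heisenberg group. Since you need $H^1$ anyway, interpolate between $H^1$ and $L^2$ and use duality for $p>2$. The paper does this via the Hardy-space interpolation of Kunstmann--Uhl and Hofmann--Mayboroda--McIntosh together with the Grafakos--Sagher analytic-family theorem.
\end{itemize}
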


Examples of such operators include
\begin{itemize}
\item
  A kind of sub-Laplacian Mihlin multipliers are the $φ(-∆)$ for which $|∂^mφ(σ)| ≤ C_mσ^{-m}$. For these operators we may take $γ = t = |K| = 0$, and thus deduce their boundedness on $H^p(ℍ^d)$ for any $p$ as well as on $\BMO(ℍ^d)$ from this theorem. This result is well known; see e.\,g.\@ \cite{MuellerS} where it is shown using the spectral decomposition of $∆$.
\item
  The solution to $i∂_tu + (-∆)^nu = 0$, $u(0,\argd) = u_0$ for a positive integer $n$ is $u(t,\argd) = e^{it(-∆)^n}u_0$. The regularity statement
  \[
    \|u(t,\argd)\|_{H^p(ℍ^d)} ≤ C_p (1 + t)^{Q|1/p-1/2|}\|(1-∆)^{r/2}u_0\|_{H^p(ℍ^d)}
  \]
  with $r ≥ nQ|1/p-1/2|$, and the corresponding bound for $\BMO$, then follow from this theorem if one applies it to the operator with $φ_t(σ) = e^{itσ^n}(1 + σ)^{-r/2}$, taking $γ = n$ and $|K| = 0$, and applies Theorem \ref{maottlig multiplikator-s} below to the Bessel potentials.
\item
  The operators $D^I(1 - ∆)^{-n/2}$ with $|I| ≤ n$ satisfy the condition with $γ = t = |I'| = 0$ and any $p$. Together with Theorem \ref{maottlig multiplikator-s}, this shows that $\|D^If\|_{H^p(ℍ^d)} ≤ C_p\|(1-∆)^{n/2}f\|_{H^p(ℍ^d)}$ holds for all Schwartz $f$, and likewise for BMO. This thus provides the second estimate of Theorem \ref{Besselpotentials}.   
\end{itemize}
To deduce Theorem \ref{pd-s} for non-integer $γ$ and the first estimate of Theorem \ref{Besselpotentials}, a little more work is required. This is done in Section \ref{tillaempningsavdelning}.

\paragraph*{Outline}
In Section \ref{Fourieranalysavdelning} we state the basic Fourier analysis on the Heisenberg group, define sub-Laplacian Fourier multipliers and derive a number of $L^2$ estimates and properties of these operators. We also prove a Taylor's theorem on $ℍ^d$. In Section \ref{nyckeluppskattningsavdelning}, we state the refinement of the frequency decomposition of $φ_t(-∆)$ into a diagonal Laguerre polynomial expansion, and prove the key Lemma \ref{nyckelhs}, which relates position multipliers with frequency derivatives. As an application, we show that sub-Laplacians of moderate growth are closed on the Schwartz space on $ℍ^d$. In Section \ref{Hardyrumsavdelning}, we state the definition of Hardy spaces on $ℍ^d$, and provide a related commutation lemma. Theorem \ref{allmaenna s} is proved in Section \ref{allmaenna ss avdelning}. In Section \ref{tillaempningsavdelning}, we prove Theorem \ref{pd-s} and Theorem \ref{Besselpotentials} as corollaries of Theorem \ref{allmaenna s}.

The proof of Theorem \ref{allmaenna s} follows the same general steps as Miyachi's proof. It uses the atomic as well as square function characterisation of Hardy spaces, which was developed for the Heisenberg group and other homogeneous groups by Folland and Stein \cite{FollandS}. A major new contribution is the mentioned Lemma \ref{nyckelhs}. The counterpart for Euclidean space of this lemma is trivial, since the Euclidean Fourier transform maps $\Schwartz(ℝ^d)$ to itself, but the nature of position multipliers on general homogeneous groups appears to be highly dependent on the specific group structure. A new general method for showing $H^1$ boundedness has also been developed, which was needed since the necessary quasi-Banach to Banach interpolation tools for Triebel–Lizorkin spaces on the Heisenberg group are still unavailable, to the author's knowledge.

\paragraph*{Notation and conventions} It will be convenient to denote implicit variables with a dot under the letter. We may \eg write $\|f(v,\ud w)\| = \|f(v,\argd)\|$ meaning that the norm is with respect to the second argument of $f$. We shall frequently denote positive constants with the letter $C$, which may be different in each instance. We will also write $A \lesssim B$ if there is a $C > 0$ such that $|A| ≤ CB$, and $A \simeq B$ means that $A \lesssim B$ and $B \lesssim A$. Inner products of $L^2$-functions $f,g$ are denoted by $\langle f, g \rangle_{L^2} = ∫\overbar fg$. We shall write the action of a distribution $d$ on a test function $f$ as $(d \mathbin| f) = (d \mathbin|_v f(v))$.

\section{Fourier analysis and \textit{L\textsuperscript2} estimates}\label{Fourieranalysavdelning}
We start by recalling some basic properties of the Heisenberg group and the harmonic analysis of its Schrödinger representation. We refer to \cite{Taylor} and \cite{FollandS} for the statements given without proof.

The Heisenberg group operation commutes with the dilation operation $δ_t : (z,s) ↦ (tz, t^2s)$ for nonzero $t ∈ ℝ$. This property makes $ℍ^d$ into a homogeneous group, with the norm $|v| = (|z|^4 + s^2)^{1/4}$. A function $p$ such that $p\circ δ_t = t^np$ is said to be of homogeneous degree $n$. A (differential) operator $A$ such that $A[f\circ δ_t] = t^m Af \circ δ_t$ is said to be of homogeneous degree $m$. For such $p$ and $A$, $Ap$ is homogeneous of degree $n - m$. If $p$ is a polynomial, $Ap = 0$ if $m > n$.

We shall include all the left-invariant vector fields of $ℍ^d$ in the combined vector $D = (D_1,\ldots,D_{2d+1})$, and only the so-called horizontal vector fields of $X$ and $Y$ in the vector $E = (E_1,\ldots,E_{2d})$. We thus write
\begin{align*}
  &D_i = E_i = X_i &&\hspace{-7.5em}\text{when }i = 1,\ldots,d\\
  &D_i = E_i = Y_i &&\hspace{-7.5em}\text{when }i = d + 1,\ldots,2d\\
  &D_{2d+1} = S.
\end{align*}
Composition of these are further abbreviated into $D^I$ and $E^I$, with the ordered multi-index $I = (i_1, \ldots, i_n)$, so that $D^I = D_{i_1}\cdots D_{i_n}$ and likewise for $E^I$. The homogeneous length of $I$ is then $|I| = ∑_{j=1}^n μ(i_j)$, where $μ(i)$ is the homogeneous degree of $D_i$, that is $2$ for $i = 2d + 1$ and otherwise $1$. 

We shall often use functions in the Schwartz class $\Schwartz(ℍ^d)$. The usual definition with derivatives replaced with e.\,g.\@ left-invariant vector fields coincides with $\Schwartz(ℝ^{2d + 1})$. The Haar measure on $ℍ^d$ is the Lebesgue measure, and it is invariant under left and right translations. With this measure, a ball $B$ of radius $2^ℓ$ has volume $|B| = C_Q2^{Qℓ} = C_Q2^{(2d + 2)ℓ}$.

It will be crucial for the upcoming analysis that we can perform a form of diagonalisation of the sub-Laplacian. For this we shall need the convolution formula
\[
  f \star g(v) = ∫_{ℍ^d} f(v \go w^{-1})g(w)\de w.
\]
Since the sub-Laplacian is left-invariant, it is given by right convolution with a distribution.  Indeed, any left-invariant operator $T$ for which the mapping $\Schwartz(ℍ^d) \ni f \mapsto Tf(0)$ is continuous is given by right convolution with a tempered distribution. To see this, let $P$ be the parity operator, so that $Pf(w) = f(w^{-1})$, and let $δ$ be the point mass distribution with $(δ,f) = f(0)$. Then $Tf(v) = Tf(v \go 0) = T(f(v\go \argd))(0) = (δ \circ T \mathbin| f(v\go \argd)) = (δ \circ T \circ P \mathbin|_w f(v\go w^{-1})) = f \star (δ \circ T \circ P)(v)$, where we have the usual definition $f \star ω(v) = (ω\mathbin|_w f(v \go w^{-1})) = (ω ∘ P ∘ ℓ_v \mathbin|f)$, with $ℓ_v$ defined by $ℓ_v(f)(w) = f(v \go w)$. The sub-Laplacian is bounded on $\Schwartz(ℍ^d)$, so this applies. It is also essentially self-adjoint and commutes with $P$, so it is natural to write its right convolution kernel $δ \circ ∆ \circ P =: ∆δ$. 

Convolutions are partially diagonalised with the Fourier transform $\F $ on $\Schwartz(ℍ^d)$. This operator is defined such that for each $λ ∈ ℝ\setminus\{0\}$,
\[
  \F f(λ) = ∫_{ℍ^d}f(v)\,U^λ_v\de v 
\]
where $v \mapsto U^λ_v$ is a homomorphism and the $U^λ_v$ are unitary operators on $L^2(ℝ^d)$ defined by 
\[
  U^λ_vα(q) = e^{iλ(s + x·q + x·y/2)}α(q + y).
\]
With the integrated operator above, we mean that for all $α,β ∈ \Schwartz(ℝ^d)$,
\[
  \langle α, \F f(λ) β\rangle_{L^2(ℝ^d)} = ∫_{ℍ^d}f(v)\langle α, U^λ_vβ\rangle_{L^2(ℝ^d)}\de v.
\]
It follows that convolutions of functions $f,g ∈ \Schwartz(ℍ^d)$ satisfy the pointwise composition property
\begin{equation*}\label{diagonaliserad faltning}
  \F (f \star g)(λ) = \F f(λ)\F g(λ).
\end{equation*}

We write $\Schwartz(\hat ℍ^d)$ for the image $\F(\Schwartz(ℍ^d))$. This notation is motivated by the rapid decay and smoothness properties that operators in this image exhibit, see \cite{BahouriCD-Tempered, BahouriCD-Frequency}. On this space, the inverse $\F ^{-1}$ is given by
\[
  f(v) = \frac1{(2π)^{d + 1}}∫_ℝ |λ|^d\,\tr[U_v^{λ*}\F f(λ)] \de λ.
\]

We have the \hypertarget{plancherel}{Plancherel theorem}
\[
  \|f\|_{L^2(ℍ^d)}^2 = \frac1{(2π)^{d + 1}}∫_ℝ|λ|^d\,\|\F f(λ)\|_{\mathup{HS}}^2\de λ,
\]
with $\|\argd\|_\mathup{HS}$ denoting the Hilbert–Schmidt norm of operators on $L^2(ℝ^d)$, and polarisation of this expression leads to the corresponding inner product identity, Parseval's formula
\[
  \inpr{f,g}_{L^2(ℍ^d)} = \frac1{(2π)^{d + 1}}∫_ℝ|λ|^d\,\tr[\F f(λ)^*\F g(λ)]\de λ.
\]

 We write $\Schwartz'(\hat ℍ^d)$ for the space of bounded operators $\Schwartz(\hat ℍ^d) → ℂ$. From Parseval's formula we can define the action of the Fourier transform on distributions on $ℍ^d$: For $ω ∈ \Schwartz'(ℍ^d)$ and $F ∈ \Schwartz(\hat ℍ^d)$, 
we define, with $P$ the parity operator as before,
\[
  (\F ω\mathbin|F) := \big(ω\mathbin{\big|}\overline{\F^{-1}(F^*)}\big) = (ω \circ P \mathbin|\F^{-1}F).
\]

For the distributions we study in this work, $(ω\mathbin|_v\langle α,U^λ_vβ\rangle_{L^2(ℝ^d)})$ is defined for all $λ$ and all $α,β ∈ \Schwartz(ℝ^d)$. This defines $\F ω$ as an operator-valued function of $λ$; $\F ω(λ) = (ω\mathbin|_vU^λ_v)$, for which the definition above reduces to
\[
  (\F ω\mathbin| F) = ∫_ℝ|λ|^d\tr[\F ω(λ)F(λ)]\de λ.
\]
We claim that the $L^2$ norm of such a distribution is
\begin{align*}
  \|ω\|_{L^2(ℍ^d)}^2 &= \frac1{(2π)^{d+1}}∫_ℝ|λ|^d\|\F ω(λ)\|_\mathup{HS}^2\de λ.
\end{align*}
This is so since if one applies the usual Cauchy–Schwarz argument on the frequency side, one finds that
\begin{align*}
  \sup_{\|f\|_{L^2}=1} |(ω\mathbin|f)| &= \sup_{\|f\|_{L^2}=1} |(ω\circ P \mathbin|\F^{-1}\F f)| \\
                                       &= \sup_{\|f\|_{L^2}=1} |(\F ω \mathbin| \F f)| 
                                       ≤ \frac1{(2π)^{d+1}}∫_ℝ|λ|^d\|\F ω(λ)\|_\mathup{HS}^2\de λ,
\end{align*}
and whenever the expression in the right-hand side is finite, the distribution is an $L^2$ function and we have equality here. This also implies the convolution inequality
\[
  \|f \star w \|_{L^∞(ℍ^d)} ≤ \|f\|_{L^2(ℍ^d)}\|ω\|_{L^2(ℍ^d)}.
\]

A key property of the Schrödinger representation of the Heisenberg group is that the sub-Laplacian on $ℍ^d$ is Fourier transformed to a family of harmonic oscillator Hamiltonians. Specifically, we have that, with $α ∈ \Schwartz(ℝ^d)$ and $Vα(q) = |q|^2α(q)$,
\[
  ∆_vU_v^λα = U_v^λ( -λ^2Vα + ∆_{ℝ^d}α) = U_v^λ(-H_λ)α,
\]
where we have put $H_λα = λ^2Vα - ∆_{ℝ^d}α$. Therefore, we find that the convolution kernel of the sub-Laplacian is transformed to $\F (∆δ)$ with
\[
  \F (∆δ)(λ) = ∆_vU^λ_v|_{v=0} = -H_λ.
\]
The operator $H_1$ is the harmonic oscillator Hamiltonian, and has as eigenvectors the Hermite functions $η_m ∈ \Schwartz(ℝ^d)$ with $m ∈ ℕ^d$, which form an orthonormal basis of $L^2(ℝ^d)$ where each $m$ corresponds to the eigenvalue $d + 2|m|$. See \cite{Lebedev} for general properties of these functions, which are products of the Hermite polynomials and the weight $t \mapsto e^{-t^2/2}$. In our setting we shall need the stretched Hermite functions 
\[
  η_m^λ = |λ|^{d/4}η_m(|λ|^{1/2}\argd). 
\]
It is clear that these also form an orthonormal basis, and that
\[
  H_λη_m^λ = |λ|(d + 2|m|)η_m^λ.
\]
Therefore, if we write
\[
  P_m^λ = η^λ_m\Rinpr{η^λ_m,\argd}
\]
for the projection operators of the spectral decomposition of $H_λ$, we have
\begin{align*}
  -∆f = f \star (-∆)δ &= \F ^{-1}\big[\F f\,H_{\ud λ}\big] \\
  &=\F ^{-1}\left[\F f∑_{m ∈ ℕ^d}|\ud λ|(d + 2|m|)P_m^{\ud λ}\right].
\end{align*}
For a polynomial function $p$, it is likewise clear that
\begin{align*}
  p(-∆)f = f \star p(-∆)δ &= \F ^{-1}\left[\F f\,p(\F (-∆δ)) \right] = \F ^{-1}\left[\F f\,p(H_{\ud λ}) \right]\\
        &= \F ^{-1}\left[\F f∑_{m ∈ ℕ^d} p\big(|\ud λ|(d + 2|m|)\big)P^{\ud λ}_m \right].
\end{align*}
It is therefore natural to make the following extension.
\begin{definition}\label{def multiplikator}
For a locally integrable function $φ : [0,∞)  → ℂ$, the operator $φ(-∆)$ acts on $f ∈ \Schwartz(ℍ^d)$ with
\[
  φ(-∆)f = \F ^{-1}\Bigg[\F f ∑_{m ∈ ℕ^d} φ\big(|\ud λ|(d + 2|m|)\big)P^{\ud λ}_m \Bigg].
\]

\end{definition}
Notice that the expression on the right hand side is manifestly left-invariant, and that the correspondence $φ \mapsto φ(-∆)$ is a homomorphism, in the sense that $(ψφ)(-∆) = ψ(-∆)φ(-∆)$ holds formally. Therefore, for $f ∈ \Schwartz(ℍ^d)$ and any nonnegative integer $N$, we have from the \hyperlink{plancherel}{Plancherel theorem} and a Cauchy–Schwarz argument that
\begin{align*}
  &|φ(-∆)f(0)| = |φ(-∆)(1 - ∆)^{-N}(1 - ∆)^Nf(0)|\\
  &= \Bigg|\frac1{(2π)^{d+1}}∫_ℝ|λ|^d\,\tr\Bigg[\F [(1 - ∆)^Nf]∑_{m∈ℕ^d}\frac{φ(|λ|(d+2|m|))}{(1 + |λ|(d + 2|m|))^N}P^λ_m\Bigg]\de λ\Bigg|\\
  &≤ \|(1-∆)^Nf\|_{L^2(ℍ^d)}\sqrt{\frac1{(2π)^{d+1}}∫_ℝ|λ|^d∑_{m∈ℕ^d}\frac{|φ(|λ|(d+2|m|)|^2}{(1 + |λ|(d + 2|m|))^{2N}} \de λ},
\end{align*}
and if $φ$ grows no faster than a power $M$, the integral in $λ$ can be estimated by
\[
  ∫_0^∞λ^d∫_d^∞ν^{d-1}\frac{(1 + λν)^{2M}}{(1 + λν)^{2N}}\de ν\de λ = ∫_d^∞ν^{-2}∫_0^∞λ^d(1 + λ)^{2M-2N}\de λ\de ν < ∞,
\]
for sufficiently large $N$. Furthermore, by the left-invariance, $φ(-∆)f$ is well defined everywhere in $ℍ^d$. We can hence conclude that
\begin{lemma}
  If $φ$ has no more than polynomial growth,  $φ(-∆)f = f \star φ(-∆)δ$ for a tempered distribution $φ(-∆)δ := δ \circ φ(-∆) \circ P$. 
\end{lemma}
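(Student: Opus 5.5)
The plan is to combine the Cauchy–Schwarz estimate displayed just before the statement with the general principle, recalled earlier in this section, that a left-invariant operator $T$ for which $\Schwartz(ℍ^d)\ni f\mapsto Tf(0)$ is continuous is given by right convolution with the tempered distribution $δ\circ T\circ P$. So the whole proof reduces to checking that $f\mapsto φ(-∆)f(0)$ is a continuous linear functional on $\Schwartz(ℍ^d)$ when $φ$ has polynomial growth, and that $φ(-∆)$ is left-invariant.

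First, left-invariance. Left translation $ℓ_v$ acts on the Fourier side by $\F(ℓ_vf)(λ)=U^{λ*}_{v}\F f(λ)$ (up to the paper's conventions; this follows from the homomorphism property of $v\mapsto U^λ_v$ and invariance of the Haar measure). Since the multiplier $∑_mφ(|λ|(d+2|m|))P^λ_m$ acts by right composition in Definition \ref{def multiplikator}, it commutes with this left composition, and hence $φ(-∆)ℓ_v=ℓ_vφ(-∆)$ on $\Schwartz(ℍ^d)$.

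Second, continuity at the origin. Take $|φ(σ)|\lesssim(1+σ)^M$ and fix $N$ so large that the double integral above is finite. The Cauchy–Schwarz bound then gives $|φ(-∆)f(0)|\le C_{φ,N}\|(1-∆)^Nf\|_{L^2(ℍ^d)}$. Now $(1-∆)^N$ is a differential operator with polynomial coefficients in the left-invariant fields, so $\|(1-∆)^Nf\|_{L^2}\lesssim \sup_v(1+|v|)^{2d+2}|(1-∆)^Nf(v)|$. This is a Schwartz seminorm of $f$, because the Schwartz class defined with left-invariant vector fields coincides with $\Schwartz(ℝ^{2d+1})$. Continuity follows.

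One point needs care: using the trace/Parseval identity in the Cauchy–Schwarz step when $φ$ is only locally integrable, so that the multiplier is defined only $λ$-almost everywhere. I would handle this by observing that $φ(-∆)f(0)$ is given by the inversion formula as $\frac1{(2π)^{d+1}}∫|λ|^d\tr[\F f(λ)∑_mφ(\cdots)P^λ_m]\de λ$, which converges absolutely by the same estimate. One writes $\F f=\F[(1-∆)^Nf]\,∑_m(1+|λ|(d+2|m|))^{-N}P_m^λ$ and then applies Cauchy–Schwarz in Hilbert–Schmidt norm together with Plancherel. The sum over $m$ with multiplicities $\sim ν^{d-1}$ gives the displayed integral.

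With continuity in hand, define $φ(-∆)δ:=δ\circ φ(-∆)\circ P$. This is a tempered distribution, since it is the composition of the continuous functional above with the continuous map $P$ on $\Schwartz(ℍ^d)$. The chain of identities in the text then yields
\[
φ(-∆)f(v)=φ(-∆)(ℓ_vf)(0)=\big(φ(-∆)δ\mathbin|_wf(v\go w^{-1})\big)=f\star φ(-∆)δ(v).
\]
The main obstacle is the continuity step, specifically justifying the Fourier-side integral for merely locally integrable $φ$. The rest is bookkeeping.
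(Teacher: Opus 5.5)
Your proposal is correct and is essentially the paper's own argument: the Plancherel/Cauchy--Schwarz bound $|\varphi(-\Delta)f(0)|\le C\|(1-\Delta)^Nf\|_{L^2}$ for large $N$ gives continuity of $f\mapsto\varphi(-\Delta)f(0)$ on Schwartz functions, and left-invariance together with the general right-convolution principle then yields the kernel $\delta\circ\varphi(-\Delta)\circ P$. Your extra details only make explicit what the paper leaves implicit: the Fourier-side check of left-invariance, dominating the $L^2$ norm by a Schwartz seminorm, and absolute convergence of the inversion integral when $\varphi$ is merely locally integrable.
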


Because of its relation to the sub-Laplacian, we shall refer to the quantity $|λ|(d + 2|m|)$ in the Fourier expansion of a function or convolution kernel as \emph{frequency}, and thus speak of low and high frequency regions. 

We shall now show a number of useful lemmas about $L^2(ℍ^d)$ boundedness. We start with two basic properties of sub-Laplacian Fourier multipliers. The first shows that any $φ ∈ L^∞([0,∞))$ gives rise to an $L^2(ℍ^d)$-bounded operator.

\begin{lemma}\label{L2-multiplikator}
  For any $f ∈ L^2(ℍ^d)$, $\|φ(-∆)f\|_{L^2(ℍ^d)} ≤ \|φ\|_{L^∞([0,∞))}\|f\|_{L^2(ℍ^d)}$.
\end{lemma}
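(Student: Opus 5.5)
The plan is to work entirely on the frequency side and use the \hyperlink{plancherel}{Plancherel theorem}. First take $f ∈ \Schwartz(ℍ^d)$. By Definition~\ref{def multiplikator} we have $\F[φ(-∆)f](λ) = \F f(λ)\,M_φ(λ)$, where
\[
  M_φ(λ) := ∑_{m∈ℕ^d} φ\big(|λ|(d+2|m|)\big)P^λ_m .
\]
For each fixed $λ ≠ 0$ the projections $P^λ_m$ are orthogonal and sum to the identity, because the stretched Hermite functions $η^λ_m$ form an orthonormal basis of $L^2(ℝ^d)$. So $M_φ(λ)$ is diagonal in this basis, and its operator norm is at most
\[
  \sup_m |φ(|λ|(d+2|m|))| ≤ \|φ\|_{L^∞([0,∞))}.
\]
Here one should note that $φ$ is only locally integrable, so the pointwise values $φ(|λ|(d+2|m|))$ are defined only up to null sets. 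For each fixed $m$, however, the map $λ \mapsto |λ|(d+2|m|)$ is a dilation, so a null set of values $σ$ corresponds to a null set of $λ$. Since there are only countably many $m$, the bound holds for almost every $λ$, and that is enough under the $\dd λ$ integral.

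Next we use the ideal property of the Hilbert–Schmidt norm, $\|AB\|_{\mathup{HS}} ≤ \|A\|_{\mathup{HS}}\|B\|_{\mathrm{op}}$. This gives $\|\F f(λ)M_φ(λ)\|_{\mathup{HS}} ≤ \|φ\|_{L^∞}\|\F f(λ)\|_{\mathup{HS}}$. One can also see this directly by writing $\|\F f(λ)M_φ(λ)\|_{\mathup{HS}}^2 = ∑_m |φ(|λ|(d+2|m|))|^2\|\F f(λ)η^λ_m\|^2$. Squaring, integrating against $|λ|^d\dd λ/(2π)^{d+1}$ and applying Plancherel on both sides gives $\|φ(-∆)f\|_{L^2} ≤ \|φ\|_{L^∞}\|f\|_{L^2}$ for Schwartz $f$.

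The one point that needs some care is that Plancherel, stated above for Schwartz functions, also applies to $φ(-∆)f$, which need not be Schwartz. For this I would use the claim made earlier: a distribution whose Fourier transform is an operator-valued function with finite weighted Hilbert–Schmidt integral is an $L^2$ function, with norm given by that integral. Alternatively, one can argue by duality using Parseval's formula, pairing $φ(-∆)f$ with Schwartz $g$ and taking the supremum over $\|g\|_{L^2} = 1$. Finally, since $\Schwartz(ℍ^d)$ is dense in $L^2(ℍ^d)$, the operator extends uniquely by continuity to all $f ∈ L^2(ℍ^d)$ with the same bound, which is the statement.
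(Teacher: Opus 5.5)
Your proposal is correct and follows the paper's proof. The paper likewise applies the Plancherel theorem to write $\|\varphi(-\Delta)f\|_{L^2}^2$ as the weighted $\lambda$-integral of $\sum_{m}|\varphi(|\lambda|(d+2|m|))|^2\|\F f(\lambda)\eta^\lambda_m\|_{L^2}^2$ and bounds $|\varphi|$ by its supremum, which is your Hilbert--Schmidt computation in the Hermite basis; your extra care about almost-every-$\lambda$ values of $\varphi$, Plancherel for the possibly non-Schwartz output, and the density extension to all of $L^2$ makes explicit what the paper leaves implicit.
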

\begin{proof}
  From the \hyperlink{plancherel}{Plancherel theorem},
  \begin{align*}
    \!\!\!\|φ(-∆)f\|_{L^2(ℍ^d)}^2 &= \frac1{(2π)^{d+1}}∫_ℝ|λ|^d∑_{m∈ℕ^d}|φ(|λ|(d+2|m|))|^2 \|\F f(λ)η^λ_m\|_{L^2(ℝ^d)}^2\!\!\! \\
                            &≤ \|φ\|_{L^∞([0,∞))}\|f\|_{L^2(ℍ^d)}^2.\qedhere
  \end{align*}
\end{proof}
This basic property shows also that whenever $φ$ is bounded, this definition of sub-Laplacian multipliers is identical with the spectral definition of $φ(-∆)$ on $L^2(ℍ^d)$, a fact that follows from the uniqueness of the spectral theorem.

\begin{lemma}\label{kaerna aer L2}
  If $φ: [0,∞) → ℂ$ satisfies $φ(σ) \lesssim (1 + σ)^{-a}$ for $a > Q/4$, then $φ(-∆)δ ∈ L^2(ℍ^d)$. 
\end{lemma}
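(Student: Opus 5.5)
The plan is to use the $L^2$ characterisation of distributions established just above: $\|ω\|_{L^2(ℍ^d)}^2 = (2π)^{-d-1}∫_ℝ|λ|^d\|\F ω(λ)\|_\mathup{HS}^2\de λ$, whenever the right-hand side is finite. By Definition~\ref{def multiplikator} and the preceding lemma, $φ(-∆)δ = δ∘φ(-∆)∘P$. Its Fourier transform is obtained by applying $φ(-∆)$ formally to $δ$, whose transform is the identity operator. This gives
\[
  \F(φ(-∆)δ)(λ) = ∑_{m∈ℕ^d}φ\big(|λ|(d+2|m|)\big)P^λ_m.
\]
To justify this rigorously, I would pair $φ(-∆)δ$ with a test function $f∈\Schwartz(ℍ^d)$ and use the inversion formula. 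Up to the parity $P$, we have $(φ(-∆)δ\mathbin|f) = φ(-∆)f(0)$, and this equals $(2π)^{-d-1}∫_ℝ|λ|^d\tr[\F f(λ)∑_mφ(\cdots)P^λ_m]\de λ$, exactly as in the computation preceding the lemma. This is the defining formula for the operator-valued $\F ω(λ)$.

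Since the $P^λ_m$ are orthogonal rank-one projections, the Hilbert–Schmidt norm is
\[
  \|\F(φ(-∆)δ)(λ)\|_\mathup{HS}^2 = ∑_{m∈ℕ^d}|φ(|λ|(d+2|m|))|^2 ≤ C∑_{m∈ℕ^d}(1+|λ|(d+2|m|))^{-2a}.
\]
The number of $m∈ℕ^d$ with $|m| = k$ is $\binom{k+d-1}{d-1}\lesssim (1+k)^{d-1}$. After the substitution $ν = d+2k$, the sum is therefore bounded by a constant times $∑_{ν}ν^{d-1}(1+|λ|ν)^{-2a}$, where $ν$ runs over $d, d+2, d+4,\ldots$.

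It remains to bound $∫_ℝ|λ|^d∑_νν^{d-1}(1+|λ|ν)^{-2a}\de λ$. I would exchange the sum and the integral by Tonelli, since all terms are nonnegative. In each term I substitute $μ = |λ|ν$, which gives
\[
  ∫_ℝ|λ|^d(1+|λ|ν)^{-2a}\de λ = 2ν^{-d-1}∫_0^∞μ^d(1+μ)^{-2a}\de μ .
\]
The $μ$-integral is finite precisely when $2a-d>1$. The total is then $\lesssim∑_νν^{d-1}ν^{-d-1} = ∑_νν^{-2}<∞$.

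The step where the hypothesis enters is the convergence of the $μ$-integral, and I need to check carefully that it matches $a>Q/4$. With $Q = 2d+2$, we have $a>Q/4$ if and only if $2a>d+1$, which is exactly the condition $2a-d>1$. So the threshold is sharp for this argument, and no other obstacle is expected. The remaining care is the justification that the distribution's operator-valued transform is the one claimed. If one prefers to avoid it, one can instead argue by duality. For $f∈\Schwartz(ℍ^d)$, Parseval and Cauchy–Schwarz bound $|(φ(-∆)δ\mathbin|f)|$ by $\|f\|_{L^2}$ times the square root of the finite quantity above. Density then shows that $φ(-∆)δ$ extends to a bounded functional on $L^2$, and hence is an $L^2$ function.
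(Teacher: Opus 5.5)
Your proposal is correct and follows essentially the same route as the paper. You compute the $L^2$ norm of the kernel as $(2\pi)^{-d-1}\int|\lambda|^d\sum_m|\varphi(|\lambda|(d+2|m|))|^2\,d\lambda$, count the $m$ with $|m|=k$ by $\nu^{d-1}$, and rescale $\lambda$ so that everything reduces to $\int_0^\infty\lambda^d(1+\lambda)^{-2a}\,d\lambda$, which is finite exactly when $a>Q/4$; the only cosmetic difference is that the paper first compares the sum over $m$ with an integral in $\nu$ and rescales $\lambda$ by $1+\nu$, whereas you keep the discrete sum, rescale term by term, and end with $\sum_\nu\nu^{-2}$.
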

\begin{proof}
  
  From the $L^2$ norm of distributions and using the monotonicity of $(1 + σ)^{-a}$, Fubini's theorem and a change of variables, we have
\begin{align*}
  \|φ(-∆)δ\|_{L^2(ℍ^d)}^2 &= \frac1{(2π)^{d+1}}∫_ℝ|λ|^d\|\F (φ(-∆)δ)(λ)\|_\mathup{HS}^2\de λ\\
                          &=\frac1{(2π)^{d+1}}∫_ℝ|λ|^d∑_{m∈ℕ^d}|φ(|λ|(d + 2|m|))|^2\de λ\\
                            &\lesssim ∫_0^∞λ^d∑_{m∈ℕ^d}(1 + λ(d + 2|m|))^{-2a}\de λ\\
                          &\lesssim ∫_0^∞λ^d∫_0^∞ν^{d-1}(1 + λ(1 + ν))^{-2a}\de ν\de λ \\
                          &= ∫_0^∞∫_0^∞\frac{λ^d}{(1 + ν)^{d+1}}ν^{d-1}(1 + λ)^{-2a}\de λ\de ν \\
  &\lesssim ∫_0^∞ λ^d(1 + λ)^{-2a} \de λ< ∞.\qedhere
\end{align*}
\end{proof}

A recurring theme in the upcoming analysis will be the replacement of the left-invariant vector fields $X$, $Y$ and $S$ with functions of the sub-Laplacian, which we then can analyse on the frequency side. This process comes in large down to bookkeeping of the results of the \hypertarget{commutation relation}{commutation relations}\label{commutation relation}
\[
  [X_i,Y_j] = -δ_{ij}S,\qquad [X_i, S] = [Y_i, S] = 0
\]
and dealing with the fact that $∆$ is a second order differential operator while the horizontal left-invariant vector fields are of first order. The first result in that direction is the following estimate.

\begin{lemma}\label{derivatuskshs}
  For all multi-indices $I$ and all $f ∈ \Schwartz(ℍ^d)$,
  \begin{align*}
    \|D^If\|_{L^2(ℍ^d)} &\lesssim \|(-∆)^{|I|/2}f\|_{L^2(ℍ^d)}.
  \end{align*}
\end{lemma}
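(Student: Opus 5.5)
We prove Lemma~\ref{derivatuskshs} by induction on the homogeneous length $|I|$. We reduce everything to the two basic $L^2$ identities
\[
  \|Sf\|_{L^2(ℍ^d)} \le \|(-∆)f\|_{L^2(ℍ^d)}/d, \qquad ∑_{i=1}^{2d}\|E_if\|_{L^2(ℍ^d)}^2 = \Hinpr{f,(-∆)f} = \|(-∆)^{1/2}f\|_{L^2(ℍ^d)}^2 .
\]

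\emph{Base estimates.} The second identity follows by integration by parts, since each $E_i$ is skew-adjoint on $L^2(ℍ^d)$ (the Haar measure is Lebesgue measure and the vector fields are divergence free). Thus $∑_i\|E_if\|^2 = -∑_i\Hinpr{f,E_i^2f} = \Hinpr{f,-∆f}$, and this equals $\|(-∆)^{1/2}f\|^2$ by Plancherel and Definition~\ref{def multiplikator}.

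For $S$ we work on the frequency side. Differentiating $U^λ_v$ in $s$ gives the factor $iλ$, so $\F(Sf)(λ) = iλ\,\F f(λ)$. In the Hermite basis the multiplier of $-∆$ is $|λ|(d+2|m|) \ge d|λ|$. By Plancherel we therefore get $\|Sf\|\le d^{-1}\|(-∆)f\|$, and more generally $\|S^kg\|\lesssim\|(-∆)^kg\|$ and $\|S^{a}(-∆)^{b}g\|\lesssim \|(-∆)^{a+b}g\|$ for real $a,b\ge0$, interpreted as functions of the commuting operators $iS$ and $-∆$. Both are diagonal in $(λ,m)$, so this is just the scalar inequality $|λ|\le|λ|(d+2|m|)/d$.

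\emph{Induction.} Write $D^I = D_{i_1}D^{I_1}$ with $|I| = μ(i_1)+|I_1|$.

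If $i_1 = 2d+1$, then $S$ commutes with everything. We get $\|SD^{I_1}f\| = \|D^{I_1}Sf\|\lesssim\|(-∆)^{|I_1|/2}Sf\|$ by induction applied to $Sf\in\Schwartz$. This is at most $\|(-∆)^{|I_1|/2+1}f\|$ by the diagonal estimate above, since $S$ commutes with $(-∆)^{|I_1|/2}$ (both are Fourier multipliers diagonal in the same basis).

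If $i_1\le 2d$, the base identity applied to $g = D^{I_1}f$ gives $\|E_{i_1}g\|\le\|(-∆)^{1/2}g\|$. The task is then to move $(-∆)^{1/2}$ past $D^{I_1}$, and this is the main obstacle, since a fractional power does not commute with the $E_j$.

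To avoid fractional commutators, I would instead bound everything by even powers. I would first prove the statement for $|I| = 2k$ even, using
\[
\|E_{i}E_{j}g\|^2\le ∑_{i,j}\|E_iE_jg\|^2 \lesssim \|∆g\|^2 + \|Sg\|^2 .
\]
This is the standard Heisenberg analogue of $\sum\|∂_i∂_jg\|^2=\|∆g\|^2$. It is obtained by integrating by parts twice and using $[X_i,Y_j]=-δ_{ij}S$; the resulting commutator terms are of the form $\Hinpr{E g, S E g}$ and are controlled by Cauchy--Schwarz together with $\|Sg\|\lesssim\|∆g\|$ and $\|E(-∆)^{1/2}\cdot\|$-type bounds, all diagonal on the frequency side. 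Iterating this and commuting $-∆$ through $D^{I_1}$, the commutators $[∆,E_j]$ are $\pm 2S E_{j'}$, which have the same homogeneous degree and are handled by the induction hypothesis. This gives $\|D^If\|\lesssim\|(-∆)^{|I|/2}f\|$ for even $|I|$.

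For odd $|I| = 2k+1$, write $D^I = E_{i_1}D^{I_1}$ with $|I_1| = 2k$. By the base identity, $\|E_{i_1}D^{I_1}f\|^2\le \Hinpr{D^{I_1}f,(-∆)D^{I_1}f}$. Using the adjoint $(D^{I_1})^* = \pm D^{\bar I_1}$ and expanding $-∆$, this quantity is a sum of $\|D^{J}f\|^2$-type pairings with $|J| = 2k+2$ and $|J'| = 2k$. Each pairing can be estimated via the even case as $\lesssim\|(-∆)^{k+1}f\|\,\|(-∆)^{k}f\|$, but only after rebalancing, by writing the pairing as $\Hinpr{(-∆)^{-1/2}\cdots}$. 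I would instead apply the even case to $h=(-∆)^{1/2}f$, which is legitimate if the even estimate holds for $h$ merely with $h,(-∆)^kh\in L^2$ (by density). Concretely, $\Hinpr{D^{I_1}f,(-∆)D^{I_1}f} = \Hinpr{D^{I_1}f, D^{I_1}(-∆)f} + (\text{commutator terms})$. The first term is bounded by $\|D^{I_1}(-∆)^{1/2}f\|\cdot\|D^{I_1}(-∆)^{1/2}f\|$ after moving $(-∆)^{1/2}$ to the other side by self-adjointness. That step in turn requires commuting $(-∆)^{1/2}$ with $D^{I_1}$, which is the genuinely delicate step. Here I would use $D^{I_1}(-∆)^{1/2}=(-∆)^{1/2}D^{I_1}+$ lower terms, controlled on the frequency side: in the Hermite basis $E_j$ acts as creation/annihilation operators shifting $m$ by $\pm e_j$, so commuting with $φ(-∆)$ produces differences $φ(|λ|(d+2|m|\pm2))-φ(|λ|(d+2|m|))\lesssim |λ|φ'$, which are of lower order.
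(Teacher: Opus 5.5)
Your base estimates and your even-length induction are sound. In fact the cross term there needs no Cauchy--Schwarz, since $\sum_{i,j}\|E_iE_jg\|^2=\|\Delta g\|^2+2d\|Sg\|^2$ holds exactly. The gap is the odd case, where both routes you sketch stall.

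The pairing route only gives bounds like $\|(-\Delta)^{k}f\|\,\|(-\Delta)^{k+1}f\|$. These dominate $\|(-\Delta)^{k+1/2}f\|^2$ rather than being dominated by it, so they do not give the required bound. The integer commutator terms $\langle E^{I_1}f,E^{J}Sf\rangle$ you would pick up are unbalanced in the same way.

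The fractional-commutator route rests on the claim that $[(-\Delta)^{1/2},D^{I_1}]$ is of lower order, and that is false on the Heisenberg group. The multiplier $(\sigma\pm2|\lambda|)^{1/2}-\sigma^{1/2}\approx|\lambda|\sigma^{-1/2}$ has exactly the homogeneity of $\sigma^{1/2}$, because $|\lambda|$ and $\sigma=|\lambda|(d+2|m|)$ scale alike; at $|m|=0$ it is a fixed multiple of $\sigma^{1/2}$. Since the inequality is dilation invariant, there are no lower-order terms to discard. The commutator consists of terms of full degree $2k+1$ with a multiplier trapped between vector fields. Your induction hypothesis, which only controls $\|D^Jg\|$, does not reach them.

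The repair is already in your even case, which never actually uses evenness. After moving the central $S$ factors onto $f$, take any $E^I=E_{i_1}E_{i_2}E^{I_2}$ with at least two horizontal factors. Then $\|E^If\|^2\le\|\Delta E^{I_2}f\|^2+2d\|E^{I_2}Sf\|^2$. Commuting $\Delta$ through $E^{I_2}$ with $[\Delta,E_j]=\pm2SE_{j'}$ leaves $E^{I_2}\Delta f$ plus terms $E^{J}Sf$ with $J$ as long as $I_2$. The induction hypothesis applied to the Schwartz functions $\Delta f$ and $Sf$, together with $|\lambda|\le\sigma/d$, closes the step. For odd $|I|$ the descent simply ends at $\|E_jg\|\le\|(-\Delta)^{1/2}g\|$, so no fractional power is ever commuted.

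The paper handles the same difficulty differently. It proves the chain $\|(-\Delta)^{k/2}E_jf\|\lesssim\|(-\Delta)^{(k+1)/2}f\|$, getting the half-power step from $\|(-\Delta)^{1/2}E_jg\|^2=\sum_i\|E_iE_jg\|^2$ and reducing higher $k$ via $\Delta X_j=X_j\Delta+2Y_jS$.

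Your ladder-operator observation would also give a commutator-free proof for all $I$ at once. On the Fourier side, $D^I$ acts by right multiplication with $|\lambda|^{|I|/2}$ times a sum of monomials in ladder operators. Each monomial maps $\eta^\lambda_m$ to $c_m\eta^\lambda_{m+s}$ with a fixed shift $s$ and $|c_m|\lesssim(d+2|m+s|)^{|I|/2}$, and Plancherel finishes.
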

\begin{proof}
  We start by noting that, since $\F (Sf)(λ) = iλ\F f(λ)$, we have from the \hyperlink{plancherel}{Plan}\-cherel theorem that 
  \begin{align*}
    \|Sf\|_{L^2(ℍ^d)}^2 &= \frac1{(2π)^{d+1}}∫_ℝ|λ|^d∑_{m∈ℕ^d}λ^2\|\F f(λ)η^λ_m\|_{L^2(ℝ^d)}^2 \de λ\\
                          &≤\frac1{(2π)^{d+1}}∫_ℝ|λ|^d∑_{m∈ℕ^d}λ^2(d + 2|m|)^2\|\F f(λ)η_m^λ\|_{L^2(ℝ^d)}^2 \de λ\\
                          &=\|(-∆)f\|^2_{L^2(ℍ^d)}.
  \end{align*}
  Therefore, since $S$ commutes with $E$ as well as with $(-∆)^{1/2}$, it suffices to show that $\|E^If\|_{L^2(ℍ^d)} \lesssim \|(-∆)^{|I|/2}f\|_{L^2(ℍ^d)}$. Furthermore, it will be enough to show that for all indices $j ∈ \{1,\ldots, 2d\}$ and nonnegative integers $k ≤ |I| - 1$, we have for all Schwartz $f$ the estimate
  \[
    \|(-∆)^{k/2}E_if\|_{L^2(ℍ^d)} \lesssim \|(-∆)^{(1+k)/2}f\|_{L^2(ℍ^d)}.
  \]
  And to conclude this relation, it will in turn suffice to show the three estimates
  \begin{align}
    \|E_jf\|_{L^2(ℍ^d)} &\lesssim \|(-∆)^{1/2}f\|_{L^2(ℍ^d)} \label{Ef}\\
    \|E_iE_jf\|_{L^2(ℍ^d)} &\lesssim \|(-∆)f\|_{L^2(ℍ^d)} \label{EEf}
  \end{align}
  and if $2 ≤ k ≤ |I| - 1$,
  \begin{equation}\label{LaEf}
    \|(-∆)^{k/2}E_jf\|_{L^2(ℍ^d)} \lesssim \|(-∆)^{k/2-1}E_j(-∆)f\|_{L^2(ℍ^d)} + \|(-∆)^{(k+1)/2}f\|_{L^2(ℍ^d)}.
  \end{equation}
  This is so, since estimate \pref{Ef} provides for the case $k = 0$, while for $k = 1$, we can use the symmetry of $(-∆)^{1/2}$ and estimate \pref{EEf} to deduce that
  \begin{align*}
    \|(-∆)^{1/2}E_jf\|_{L^2(ℍ^d)}^2 &= \big\langle E_jf, (-∆)E_jf\big\rangle_{L^2(ℍ^d)}\\
                                  &= ∑_{i=1}^{2d}\big\langle E_iE_jf, E_iE_jf\big\rangle_{L^2(ℍ^d)}\\
                                  &\lesssim \|(-∆)f\|_{L^2(ℍ^d)}^2.
  \end{align*}
  If $k ≥ 2$, we use estimate \pref{LaEf} several times to find
  \begin{align*}
    \|(-∆)^{k/2}E_jf\|_{L^2(ℍ^d)} &\lesssim  \|(-∆)^{(k+1)/2}f\|_{L^2(ℍ^d)} + \|(-∆)^{k/2-1}E_j(-∆)f\|_{L^2(ℍ^d)}\\[.2em]
                                  \hspace{2.5em}&\hspace{-2.5em}\lesssim  \|(-∆)^{(k+1)/2}f\|_{L^2(ℍ^d)} + \|(-∆)^{k/2-\floor{k/2}}E_j(-∆)^{\floor{k/2}}f\|_{L^2(ℍ^d)},
  \end{align*}
  and if $k$ is even, estimate \pref{Ef} provides the final upper bound on the second term. If $k$ is odd, $k/2-\floor{k/2} = 1/2$ and we use the same reasoning as for $k = 1$ above.

  Now, it is clear that $\|E_jf\|_{L^2(ℍ^d)}^2 ≤ \big\langle f,-∆f\big\rangle_{L^2(ℍ^d)} = \|(-∆)^{1/2}f\|_{L^2(ℍ^d)}^2$ for every $j$, so estimate \pref{Ef} holds.
  For estimate \pref{EEf}, we have firstly that if $E_iE_j$ commute,
  \[
    0 ≤ \big\|E_iE_jf\big\|_{L^2(ℍ^d)} = \big\langle E_i^2f, E_j^2f \big\rangle_{L^2(ℍ^d)},
  \]
  while from the commutation relation of $X_j$ and $Y_j$ we find that
  \begin{align*}
    \|X_jY_jf\|_{L^2(ℍ^d)}^2 &= \big\langle X_j^2f, Y_j^2f\big\rangle_{L^2(ℍ^d)} + 2\big\langle Sf,X_jY_jf\big\rangle_{L^2(ℍ^d)}\\
            &= \big\langle Y_j^2f, X_j^2f\big\rangle_{L^2(ℍ^d)} - 2\big\langle Sf,Y_jX_jf\big\rangle_{L^2(ℍ^d)}\\
                             &= \sfrac12\big\langle X_j^2f,Y_j^2f\big\rangle_{L^2(ℍ^d)} + \sfrac12\big\langle Y_j^2f,X_j^2f\big\rangle_{L^2(ℍ^d)} - \|Sf\|_{L^2(ℍ^d)}^2 \\
    &= \|Y_jX_jf\|_{L^2(ℍ^d)}^2.    
  \end{align*}
  The nonnegativity of this expression means in particular that
  \[
    \big\langle X_j^2f,Y_j^2f\big\rangle_{L^2(ℍ^d)} + \big\langle Y_j^2f,X_j^2f\big\rangle_{L^2(ℍ^d)} ≥ \|Sf\|_{L^2(ℍ^d)}^2 ≥ 0.
  \]
  It follows that for every $i, j$,
  \begin{align*}
    &|(-∆)f\|_{L^2(ℍ^d)}^2 =\\[.2em]
    &\ANDRA ∑_{|k - ℓ| \neq d} \big\langle E_k^2f,E_ℓ^2f\big\rangle_{L^2(ℍ^d)}
                           +\ ∑_k \Big[\big\langle X_k^2f,Y_k^2f\big\rangle_{L^2(ℍ^d)} + \big\langle Y_k^2f, X_k^2f\big\rangle_{L^2(ℍ^d)}\Big]\\[.2em]
                           &≥ \sfrac12\Big[\big\langle E_i^2f,E_j^2f\big\rangle_{L^2(ℍ^d)} + \big\langle E_j^2f, E_i^2f\big\rangle_{L^2(ℍ^d)}\Big]\\[.2em]
        &≥ \|E_iE_jf\|_{L^2(ℍ^d)}^2.
  \end{align*}
  We show the third estimate \pref{LaEf} for $E_j = X_j$. The other case is similar. Since $∆X_j = X_j∆ + 2Y_jS$, we have
  \begin{align*}
    &\FOERSTA\Hinpr{(-∆)^{k/2}X_jf,(-∆)^{k/2}X_jf} =\\
    &\ANDRA\Hinpr{(-∆)^{k/2}X_jf,(-∆)^{k/2-1}X_j(-∆)f}\\
    &\TERM+ 2\Hinpr{(-∆)^{k/2}X_jf,(-∆)^{k/2-1}Y_jSf} \\[.2em]
    &=\Hinpr{(-∆)^{k/2-1}X_j(-∆)f,(-∆)^{k/2-1}X_j(-∆)f}\\
    &\TERM+ 2\Hinpr{(-∆)^{k/2}X_jf,(-∆)^{k/2-1}Y_jSf} \\
    &\TERM+ 2\Hinpr{(-∆)^{k/2-1}Y_jSf,(-∆)^{k/2-1}X_j(-∆)f}.
  \end{align*}
  To deal with the two last terms, notice that they can be written as two times
  \[
    -\Hinpr{f,X_j(-∆)^{k-1}Y_jSf} + \Hinpr{f,Y_j(-∆)^{k-2}X_j(-∆)Sf}.
  \]
  Now, if we in the first term could push $Y_j$ to the left so that it is to the right of $X_j$, and if we likewise could push $X_j$ in the second term so that it is to the right of $Y_j$, then we would have
  \begin{align*}
    \Hinpr{f,[Y_j,X_j](-∆)^{k-1}Sf} &=  \Hinpr{f,(-∆)^{k-1}S^2f} \\
                                    &= - \|(-∆)^{(k-1)/2}Sf\|_{L^2(ℍ^d)}^2
  \end{align*}
  and we would be done since $\|Sf\|_{L^2(ℍ^d)} ≤ \|∆f\|_{L^2(ℍ^d)}$. When we thus commute $Y_j$ and $X_j$ with the powers of $(-∆)$, we get additional terms with an extra operator $S$. With $|N| = |L| = k - 1$, these terms are of the form
  \begin{align*}
    \Hinpr{f,E^NE^LS^2f} &= (-1)^{k-1}\Hinpr{E^NSf,E^LS^2f}\\
                           &\lesssim \|E^NSf\|_{L^2(ℍ^d)}^2 + \|E^LSf\|_{L^2(ℍ^d)}^2,
  \end{align*}
  and since $|N|,|L| = k-1 < |I| - 1$, we can use the argument for lower $|I|$ to show that $\|E^LSf\|_{L^2(ℍ^d)} \lesssim \|(-∆)^{|L|/2}Sf\|_{L^2(ℍ^d)} \lesssim \|(-∆)^{(k+1)/2}f\|_{L^2(ℍ^d)}$ and likewise for the other term.
\end{proof}

In the proof of Theorem \ref{allmaenna s}, an important step will be to estimate expressions of the form $\||\argd|^{2N}D^If\|_{L^2(ℍ^d)}$. The following lemma is the first step in that endeavour. The special polynomial $is - \sfrac14|z|^2$, which has a length comparable to $|v|^2$, shall turn out to be key when we translate position multipliers into frequency derivatives in the next section.

\begin{lemma}\label{stora polynomderivatshs} 
  For a homogeneous polynomial $p$ of degree $m$, it holds for every multi-index $I$ and Schwartz $f$ that, if $m + |I|$ is even,
  \begin{align*}
    \hspace{1em}&\hspace{-1em}\Hnorm{pD^If}^2 ≤\\
    &C_{p,I} ∑_{\substack{\ceil{|I|/2} - \ceil{m/2} ≤ j ≤ \ceil{|I|/2}\\j≥0}}\left\|(i\ud s - \sfrac14|\ud z|^2)^{\ceil{m/2} - \ceil{|I|/2} + j}(-∆)^jf\right\|_{L^2(ℍ^d)}^2,  \end{align*}
  while if $m + |I|$ is odd,
  \begin{align*}
    \hspace{1em}&\hspace{-1em}\Hnorm{pD^If}^2 ≤\\
    &C_{p,I} ∑_{\substack{\floor{|I|/2} - \ceil{m/2} + 1 ≤ j ≤ \floor{|I|/2}\\j≥0}}\Big|\Big\langle(i\ud s - \sfrac14|\ud z|^2)^{\ceil{m/2} - \floor{|I|/2} + j - 1}(-∆)^jf, \\[-1em]
    &\hspace{12em}(i\ud s - \sfrac14|\ud z|^2)^{\ceil{m/2} - \floor{|I|/2} + j}(-∆)^jf\Big\rangle_{L^2(ℍ^d)}\Big|\\
    +\ &C_{p,I}∑_{\substack{(|I|-m-1)/2≤j≤\ceil{|I|/2}-1\\j≥0}}\Big| \Big\langle(i\ud s - \sfrac14|\ud z|^2)^{(m-|I|+1)/2+j}(-∆)^jf,\\[-1em]
    &\hspace{12em}(i\ud s - \sfrac14|\ud z|^2)^{(m-|I|+1)/2+j}(-∆)^{j+1}f\Big\rangle_{L^2(ℍ^d)}\Big|.
  \end{align*}
\end{lemma}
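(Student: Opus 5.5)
We prove the statement by induction on $m + |I|$. The mechanism is to trade position weights and vector fields against each other using integration by parts and commutators, until every derivative is collected into a power of $-∆$ and every weight into a power of $q := is - \sfrac14|z|^2$.

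\textbf{Reduction to horizontal fields.} First we remove $S$ from $D^I$. By the commutation relations we may move every $S$ to the right. Each $S$ has homogeneous degree $2$ and equals $[Y_j,X_j]$ for any fixed $j$, so we may replace it by $Y_jX_j - X_jY_j$ at the cost of a sum of terms. This preserves both the degree bookkeeping and the parity of $m+|I|$, so it suffices to treat $D^I = E^I$ with $|I|$ equal to the number of factors.

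\textbf{Structure of the weight.} Next we expand $\Hnorm{pE^If}^2 = \Hinpr{E^If,\overline p p\,E^If}$ and move the vector fields across the inner product, using that each $E_i$ is skew-adjoint.

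Whenever a vector field hits a polynomial, it lowers the homogeneous degree by one. Derivatives falling on the weight therefore produce terms of the same form with a smaller $m + |I|$, which the induction hypothesis handles. Such terms account for the lower-index parts of the sums in the statement, namely those with smaller powers of $q$.

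The key structural point is the choice of weight. We write $|p|^2$, which is homogeneous of degree $2m$, in terms of the special polynomial $q$. The relevant identity is
\[
  |q|^2 = s^2 + \sfrac1{16}|z|^4 \simeq |v|^4,
\]
so $|p|^2 \lesssim |q|^{m}$ when $m$ is even, and $|p|^2 \lesssim |q|^{m-1}|z|^2$ or a similar expression when $m$ is odd. This is where the ceilings $\ceil{m/2}$ enter.

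\textbf{Collecting derivatives into $-∆$.} Once the weight has been replaced, we pair the vector fields $E_{i_1}$ with $E_{i_1}$, and so on, and sum over indices, in order to build $-∆ = -∑ E_i^2$. This is done by the same positivity manipulations as in Lemma~\ref{derivatuskshs}, in particular the estimate \pref{EEf}. Those manipulations bound $\|E_iE_jg\|$ by $\|∆g\|$ for the weighted function $g = q^k f$.

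To carry this out we need the commutator of $∆$ with powers of $q$. A short computation gives
\[
  X_j q = -\sfrac12 (x_j + i y_j), \qquad Y_j q = -\sfrac12 (y_j - i x_j),
\]
so $q$ is nearly annihilated by a complexified horizontal operator. This makes $[∆, q^k]$ expressible through $q^{k-1}$ times first-order fields, with controlled coefficients. Those coefficients again fall under the induction.

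\textbf{Parity.} In the even case, all derivatives pair up into $(-∆)^j$ on each side of the inner product, and we obtain squared norms. In the odd case, one unpaired $E_i$, or one unpaired factor $z$, remains. We then use Cauchy–Schwarz only partially, and keep the mixed inner products $\Hinpr{q^{a-1}(-∆)^jf,\, q^a(-∆)^jf}$ and $\Hinpr{q^a(-∆)^jf,\, q^a(-∆)^{j+1}f}$. These arise from $\Hinpr{E_ig,\, E_ih}$ with the extra $E_i$ summed into a Laplacian.

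\textbf{Main obstacle.} We expect the hardest step to be the exact bookkeeping of the commutators $[∆, q^k]$ and $[E_i, q^k]$. The cross terms they generate must be either absorbed or shown to have a strictly lower index in the induction. The index ranges for $j$ in the statement have to match exactly the powers produced by this bookkeeping.
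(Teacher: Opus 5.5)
Your outline has the same skeleton as the paper's argument (Lemma~\ref{polynomderivatshs} followed by reduction to powers of $q=is-\frac14|z|^2$):
\begin{itemize}
\item bound $|p|$ pointwise by $|q|^{m/2}$, resp.\ $|z|\,|q|^{(m-1)/2}$;
\item pull the weight inside $D^I$ by the Leibniz rule and apply Lemma~\ref{derivatuskshs} to the weighted function;
\item commute $(-\Delta)^j$ back through the weight and induct on the order.
\end{itemize}
Your step integrating by parts in $\langle E^If,|p|^2E^If\rangle$ does not lead anywhere by itself. With $I$ fixed it never produces powers of $\Delta$. The terms where a field hits the weight are mixed inner products with $|I|-1$ and $|I|$ derivatives, not terms ``of the same form''.

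\textbf{The odd-order step.} This is where the statement is actually decided, and the proposal has a genuine gap here. With $g=(-\Delta)^{\lfloor|I|/2\rfloor}f$ and weight $w$, you must pass from $\sum_i\|E_i(wg)\|^2=\langle wg,(-\Delta)(wg)\rangle$ to $\langle wg,w(-\Delta)g\rangle$. The commutator $[\Delta,w]$ produces the cross term $-2\sum_i\langle wg,(E_iw)E_ig\rangle$, which is not of lower order. A homogeneity-balanced splitting of its weight leaves the factor $\|wE_ig\|$, which is as large as the quantity being estimated. The other splitting gives two factors of different homogeneity. So it does not ``fall under the induction'' as you claim.

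The paper handles this by first reducing to real weights: $\|pD^If\|^2=\|p^{\mathrm R}D^If\|^2+\|p^{\mathrm I}D^If\|^2$, and the inner products on the right recombine. For real $w$ it then uses the exact identity
\[
\sum_j\|E_j(wg)\|^2=\mathrm{Re}\,\langle wg,w(-\Delta)g\rangle+\sum_j\|g\,E_jw\|^2 .
\]
Here the cross term has been integrated by parts into a genuinely lower-order square. Because you keep the complex weight $q^k$, this identity is unavailable, and you need an explicit absorption argument instead. That can be done using $|E_iq^k|\lesssim|z|\,|q|^{k-1}$, $\Delta q^k=-dk\,q^{k-1}$ and $|z|^2=-4\,\mathrm{Re}\,q$. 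Note that $X_jq=-\frac12(x_j-iy_j)$ and $Y_jq=-\frac12(y_j+ix_j)$; your signs are reversed, and it is $X_j+iY_j$ that annihilates $q$. Your last paragraph lists absorption as one option, but this is the core of the odd case, not bookkeeping.

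\textbf{Matching the right-hand side.} The second missing ingredient is how the mixed inner products are brought into exactly the forms in the statement. Consider $\langle w(-\Delta)^jf,w(-\Delta)^{j+1}f\rangle$, or, for odd $m$ and odd $|I|$, the leading term $\int|z|^2|q|^{2k}\,\overline{(-\Delta)^jf}\,(-\Delta)^{j+1}f$. Symmetric Cauchy--Schwarz puts half the weight on each side and yields squared norms whose homogeneity is off by one unit in opposite directions. Such terms do not occur on the right-hand side.

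The paper writes $|w|^2=\sum_i q_i^-q_i^+$ with $\deg q_i^\pm=\deg w\pm1$. It places $q_i^-$ on the side with fewer derivatives before applying Cauchy--Schwarz. The only weights for which this is impossible are $w=s^n$. Rewriting $s^{2n}$ through $(s+\frac i4|z|^2)^{2n}$ plus splittable terms is what produces the second family $\langle q^a(-\Delta)^jf,\,q^a(-\Delta)^{j+1}f\rangle$ and its index range. ``Use Cauchy--Schwarz only partially'' specifies none of this, so your sketch does not yet derive the sums in the statement.
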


For the proof of Lemma \ref{stora polynomderivatshs}, it will be convenient to have the following minor result at hand.

\begin{lemma}\label{polynomderivatshs} 
  For a homogeneous polynomial $p$ on $ℍ^d$ of degree $m$ and multi-index $I$ we have for all $f ∈ \Schwartz(ℍ^d)$ that
  \begin{align*}
    \big\|pD^If\big\|_{L^2(ℍ^d)}^2 &\lesssim \sfrac12\Hinpr{p(-∆)^{\lfloor|J|/2\rfloor}f,\,p(-∆)^{\lceil|J|/2\rceil}f} + \text{c.\,c.}\\[.4em]
    &\TERM+ ∑_n∑_{|J|<|I|}\Hinpr{q_J^n(-∆)^{\lfloor|J|/2\rfloor}f,\,q_J^n(-∆)^{\lceil|J|/2\rceil}f} + \text{c.\,c.},
  \end{align*}
  where c.\,c.\@ signifies the complex conjugate of the preceding term, the sum in $n$ is finite and each $q_J^n$ is a homogeneous polynomial of degree $m - |I| + |J|$.
\end{lemma}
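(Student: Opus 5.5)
The proof will go by induction on $|I|$. The idea is to move the polynomial $p$ through the derivatives, use Lemma \ref{derivatuskshs} to replace $D^I$ by powers of $-∆$, and then move $p$ back out. Every commutator produced along the way has strictly fewer derivatives and a polynomial of correspondingly lower degree, so it can be absorbed into the second sum. I read the first term on the right-hand side as having $|J|=|I|$, that is, as $\Hinpr{p(-∆)^{\floor{|I|/2}}f,\,p(-∆)^{\ceil{|I|/2}}f}$.

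\emph{Step 1: Leibniz.} Each $D_i$ is a derivation, and $D_ip$ is a homogeneous polynomial of degree $m-μ(i)$, or zero. Iterating,
\[
  pD^If = D^I(pf) - ∑_{|J|<|I|} q_J D^Jf,
\]
where each $q_J$ is homogeneous of degree $m-|I|+|J|$ and the sum is finite. Squaring and using $(a+b)^2\lesssim a^2+b^2$ gives
\[
  \|pD^If\|^2 \lesssim \|D^I(pf)\|^2 + ∑_J\|q_JD^Jf\|^2 .
\]
By the induction hypothesis applied to $q_J$ and $J$, each term $\|q_JD^Jf\|^2$ is bounded by expressions of exactly the admissible form. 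The degrees are consistent because $\deg q_J-|J| = m-|I|$.

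\emph{Step 2: the principal term.} The function $pf$ is Schwartz, so Lemma \ref{derivatuskshs} applies. Since $(-∆)^{1/2}$ is self-adjoint,
\[
  \|D^I(pf)\|^2 \lesssim \|(-∆)^{|I|/2}(pf)\|^2 = \Hinpr{(-∆)^{a}(pf),(-∆)^{b}(pf)},
\]
with $a=\floor{|I|/2}$ and $b=\ceil{|I|/2}$. Here $(-∆)^a$ and $(-∆)^b$ are genuine differential operators. Using Leibniz in the opposite direction,
\[
  (-∆)^a(pf) = p(-∆)^af + R_a, \qquad R_a = ∑_{|J|<2a} r_J E^Jf,
\]
with $\deg r_J = m-2a+|J|$, and similarly for $b$. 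Expanding the inner product yields the main term $\Hinpr{p(-∆)^af,p(-∆)^bf}$. Averaging it with its conjugate, which is legitimate because the left-hand side is real, gives the $\sfrac12(\cdots)+\text{c.\,c.}$ shape.

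\emph{Step 3: the cross terms (main obstacle).} What remains are the cross terms $\Hinpr{p(-∆)^af,\,r_JE^Jf}$ and $\Hinpr{r_JE^Jf,\,r_{J'}E^{J'}f}$. These are not yet of the required symmetric form, for two reasons: the two sides carry different polynomials, and the derivatives are distributed unevenly between them. I would handle them as follows.
\begin{itemize}
\item I integrate by parts using $D_i^*=-D_i$. Each transfer moves one vector field across and produces further lower-order terms, since derivatives of the polynomials again lower their degree.
\item I repeat this until the two sides carry $\floor{k/2}$ and $\ceil{k/2}$ derivatives, where $k$ is the total number of derivatives in the term.
\item At that point I apply Lemma \ref{derivatuskshs} once more to replace strings of $E$'s by powers of $-∆$.
\item To deal with distinct polynomials on the two sides, I use the polarisation identity
\[
  \Hinpr{q_1u,q_2w} = \sfrac14∑_{k=0}^{3} i^{k}\Hinpr{(q_1+i^kq_2)u,(q_1+i^kq_2)w},
\]
which holds with $u,w$ fixed. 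It expresses the term through inner products with the same homogeneous polynomial on both sides; these polynomials are the $q^n_J$ in the statement.
\item Where the inner products cannot be kept exactly balanced, I fall back on Cauchy–Schwarz with a small parameter $ε$. This absorbs a multiple of $\|pD^If\|^2$ into the left-hand side, and reduces the rest to squared norms of strictly lower $|J|$, which the induction hypothesis covers.
\end{itemize}
The degree count $m-|I|+|J|$ is preserved at every step, because each commutator removes exactly as much homogeneous degree from the polynomial as it removes derivatives.

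I expect the bookkeeping in Step 3 to be the main difficulty: keeping the floor/ceil balance in every term while respecting the non-commutativity $[X_j,Y_j]=-S$. Every commutator of this kind again produces a lower-order term, so it can be handled by the same induction.
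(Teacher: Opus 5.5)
Your Steps 1 and 2 agree with the paper: Leibniz, induction on the lower-order terms, and Lemma~\ref{derivatuskshs} applied to $pf$. For even $|I|$ your plan also goes through. Both pieces of $(-\Delta)^a(pf)=p(-\Delta)^af+R_a$ then have the same homogeneity, so the triangle inequality plus induction suffice.

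The gap is Step~3 for odd $|I|$. Take $|I|=1$. Then $a=0$ and $R_1=-2\sum_j(E_jp)E_jf-(\Delta p)f$. The cross term $\sum_j\Hinpr{pf,(E_jp)E_jf}$ contains one derivative in total, while every admissible term $\Hinpr{q(-\Delta)^{\floor{|J|/2}}f,\,q(-\Delta)^{\ceil{|J|/2}}f}$ contains $2|J|$.
\begin{itemize}
\item Integrating by parts only moves that derivative from one side to the other.
\item Polarisation does not change the derivative count.
\item Lemma~\ref{derivatuskshs} is an unweighted norm inequality and cannot be inserted into a weighted inner product.
\item Your fallback, Cauchy--Schwarz with $\varepsilon$, yields $\|(E_jp)E_jf\|^2$ and $\|pf\|^2$. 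Under $f\mapsto f\circ\delta_t$ these carry an extra factor $t^{2}$, respectively $t^{-2}$, relative to all admissible terms, so the admissible terms cannot bound them. There is also no copy of $\|pD^If\|^2$ available to absorb.
\end{itemize}
The same obstruction affects every cross term with an odd total number of derivatives. It also affects terms such as $\Hinpr{R_a,p(-\Delta)^{a+1}f}$, whose two factors differ in homogeneity by two.

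The missing idea is to treat the odd interaction exactly rather than estimate it.
\begin{itemize}
\item The paper first reduces to real $p$. For $p=p^{\mathrm r}+ip^{\mathrm i}$, both $\|pD^If\|^2$ and the inner products split into the $p^{\mathrm r}$ and $p^{\mathrm i}$ contributions.
\item For odd $|I|$ it writes $\|(-\Delta)^{|I|/2}(pf)\|^2=\sum_j\|E_j(-\Delta)^a(pf)\|^2$. It then splits $(-\Delta)^a(pf)=pg+R_a$, with $g=(-\Delta)^af$, inside each norm. This way the triangle inequality only ever separates pieces of equal homogeneity, and $\|E_jR_a\|^2$ is lower order.
\item For the main piece, reality of $p$ gives $2\,\mathrm{Re}(\bar g\,E_jg)=E_j|g|^2$, and one integration by parts yields
\[
  2\Hinpr{pg,(-\Delta)(pg)}=\Hinpr{pg,p(-\Delta)g}+\text{c.\,c.}+2\sum_{j=1}^{2d}\Hnorm{g\,E_jp}^2.
\]
Here the odd cross terms and the $p\,\Delta p$ term combine into the nonnegative squares $\|(E_jp)(-\Delta)^af\|^2$. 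These have the correct homogeneity and are covered by the induction.
\end{itemize}
Without the reduction to real $p$ and this symmetrisation, your Step~3 has no mechanism to bring the odd-order terms into the required form.
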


When making use of this lemma, we shall suppress the sum in $n$, so that we can write
\[
  \big\|pD^If\big\|_{L^2(ℍ^d)}^2 \lesssim ∑_{|J|≤|I|}\Hinpr{q_J(-∆)^{\lfloor|J|/2\rfloor}f,\,q_J(-∆)^{\lceil|J|/2\rceil}f} + \text{c.\,c.}
\]
for homogeneous polynomials $q_J$ of degree $m - |I| + |J|$ such that $q_J = p$ when $|J| = |I|$.

\begin{proof}
  Notice firstly that it will suffice to show this in the special case when $p$ is real, since if $p = p^\textsc r + ip^\textsc i$ for real polynomials $p^\textsc r$ and $p^\textsc i$,
  \[
    \|pD^If\|_{L^2(ℍ^d)}^2 = \|p^\textsc rD^If\|_{L^2(ℍ^d)}^2 + \|p^\textsc iD^If\|_{L^2(ℍ^d)}^2
  \]
  and we have likewise that
  \begin{align*}
    &\Hinpr{p^\textsc r(-∆)^{\lfloor|I|/2\rfloor}f,\,p^\textsc r(-∆)^{\lceil|I|/2\rceil}f} + \Hinpr{p^\textsc i(-∆)^{\lfloor|I|/2\rfloor}f,\,p^\textsc i(-∆)^{\lceil|I|/2\rceil}f} \\
    &=\Hinpr{p(-∆)^{\lfloor|I|/2\rfloor}f,\,p(-∆)^{\lceil|I|/2\rceil}f}.
  \end{align*}
  In the following, we therefore assume that $p$ is real. We have for some homogeneous polynomials $q_J$ of degrees $m - |I| + |J|$ that
  \[
    D^I(pf) = pD^If + ∑_{|J|<|I|}q_JD^Jf.
  \]
  Therefore  
  \begin{align*}
    \Hnorm{D^I(pf)}^2
    &= \Hnorm{pD^If}^2 + ∑_{|J|<|I|}∑_{|K|<|I|}\Hinpr{q_JD^Jf, q_KD^Kf}\\
    &\TERM+ ∑_{|J|<|I|}\Hinpr{q_JD^Jf, pD^If} + \text{c.\,c.}\\[.4em]
    &= \Hnorm{pD^If}^2 + ∑_{|J|<|I|}∑_{|K|<|I|}\Hinpr{q_JD^Jf, q_KD^Kf}\\
    &\TERM+ ∑_{|J|<|I|}\Hinpr{q_JD^Jf, D^I(pf)} + \text{c.\,c.}\\
    &\TERM- 2∑_{|J|<|I|}∑_{|K|<|I|}\Hinpr{q_JD^Jf, q_KD^If}\\ 
    &= \Hnorm{pD^If}^2 - ∑_{|J|<|I|}∑_{|K|<|I|}\Hinpr{q_JD^Jf, q_KD^Kf}\\
    &\TERM+ ∑_{|J|<|I|}\Hinpr{q_JD^Jf, D^I(pf)} + \text{c.\,c.}
  \end{align*}
  Hence
  \begin{align}
    \Hnorm{pD^If}^2 
    &= \Hnorm{D^I(pf)}^2 + ∑_{|J|<|I|}∑_{|K|<|I|}\Hinpr{q_JD^Jf, q_KD^Kf}\notag\\
    &\TERM- ∑_{|J|<|I|}\Hinpr{q_JD^Jf, D^I(pf)} + \text{c.\,c.}\notag\\
    &\lesssim \Hnorm{D^I(pf)}^2 + ∑_{|J|<|I|}\Hnorm{q_JD^Jf}^2\notag\\
    &\lesssim \Hnorm{(-∆)^{|I|/2}(pf)}^2 + ∑_{|J|<|I|}\Hnorm{q_JD^Jf}^2,\label{pDf}
  \end{align}
  where in the last estimate we used Lemma \ref{derivatuskshs}.

  Now if $|I|$ is even, we have $(-∆)^{|I|/2}(pf) = p(-∆)^{|I|/2}f + ∑_{|J|<|I|}q'_JD^Jf$ for some new homogeneous polynomials $q_J'$ of degrees $m - |I| + |J|$. Hence
  \begin{align*}
    \big\|(-∆)^{|I|/2}(pf)\big\|_{L^2(ℍ^d)}^2 &=  \big\|p(-∆)^{|I|/2}f\big\|_{L^2(ℍ^d)}^2 \\
                                              &\TERM+\ ∑_{|J|<|I|} \big\langle p(-∆)^{|I|/2}f,q_J'D^Jf\big\rangle_{L^2(ℍ^d)} + \text{c.\,c.}\\
                                              &\TERM+\ ∑_{|J|<|I|} ∑_{|K|<|I|}\big\langle q_J'D^Jf,q_K'D^Kf\big\rangle_{L^2(ℍ^d)}\\
                                                &\lesssim \big\|p(-∆)^{|I|/2}f\big\|_{L^2(ℍ^d)}^2 + ∑_{|J|<|I|}\Hnorm{q_J'D^Jf}^2.
  \end{align*}
  We can then combine this estimate with the one above, and conclude that
  \begin{align*}
    \Hnorm{pD^If}^2 &\lesssim \Hnorm{p(-∆)^{|I|/2}f}^2 + ∑_{|J|<|I|}\Hnorm{q_JD^Jf}^2\\
    &\TERM+ ∑_{|J|<|I|}\Hnorm{q'_JD^Jf}^2,\\[-2.2em]
  \end{align*}
  as requested.

  If $|I|$ is odd, we have 
  \[
    (-∆)^{\floor{|I|/2}}(pf) = p(-∆)^{\floor{|I|/2}}f + ∑_{|J|<2\floor{|I|/2}}π_JD^Jf,
  \]
   for some homogeneous polynomials $π_J$ of degree $m - 2\floor{|I|/2} + |J|$, and therefore 
  \begin{align*}
    \Hnorm{(-∆)^{|I|/2}(pf)}^2 
    \hspace{-4em}&\hspace{4em}= ∑_{j=1}^{2d}\Hnorm{E_j(-∆)^{\floor{|I|/2}}(pf)}^2\\
    &\lesssim ∑_{j=1}^{2d}\Bigg[\!\Hnorm{E_j\big[p(-∆)^{\floor{|I|/2}}f\big]}^2 + \!∑_{|J|<2\floor{|I|/2}}\Hnorm{E_j\big[π_JD^Jf\big]}^2\!\Bigg]\\
                 &= \Hinpr{(-∆)^{\floor{|I|/2}}(pf),\,(-∆)\big[(-∆)^{\floor{|I|/2}}(pf)\big]}\\
                 &\TERM+\ ∑_{j=1}^{2d}∑_{|J|<2\floor{|I|/2}}\Hnorm{E_jπ_JD^Jf + π_JE_jD^Jf}^2\\
                 &\lesssim  \Hinpr{(-∆)^{\floor{|I|/2}}(pf),\,(-∆)\big[(-∆)^{\floor{|I|/2}}(pf)\big]}\\
    &\TERM+\ ∑_{j=1}^{2d}∑_{|J|<2\floor{|I|/2}}\Bigg[\Hnorm{E_jπ_JD^Jf}^2 + \Hnorm{π_JE_jD^Jf}^2\Bigg].
  \end{align*}
  To deal with the inner product, we use that for any Schwartz $g$ and real polynomial $p$,
  \begin{align*}
    &\FOERSTA 2\Hinpr{pg,\, (-∆)[pg]} =\\[.4em]
    &\ANDRA\Hinpr{pg,\,p(-∆)g} + \text{c.\,c.}- 2∑_{j=1}^{2d}∫_{ℍ^d}\Big(p\overbar gE_jpE_jg + \text{c.\,c.} + |g|^2pE_j^2p\Big)\\
    &=\Hinpr{pg,\,p(-∆)g} + \text{c.\,c.} - 2∑_{j=1}^{2d}∫_{ℍ^d}\Big(E_j\big[|g|^2\big]pE_jp + |g|^2pE_j^2p\Big)\\
    &= \Hinpr{pg,\,p(-∆)g} + \text{c.\,c.} + 2∑_{j=1}^{2d}\|gE_jp\|_{L^2(ℍ^d)}^2,
  \end{align*}
  and this means for $g = (-∆)^{\floor{|I|/2}}f$ that
  \begin{align*}
    &\FOERSTA\Hinpr{p(-∆)^{\floor{|I|/2}}f, (-∆)\big[p(-∆)^{\floor{|I|/2}}f\big]} \\[.4em]
    &= \sfrac12\Hinpr{p(-∆)^{\floor{|I|/2}}f, p(-∆)^{\floor{|I|/2} + 1}f} + \text{c.\,c.} \\
    &\TERM+\ ∑_{j=1}^{2d}\|E_jp(-∆)^{\floor{|I|/2}}f\|_{L^2(ℍ^d)}^2\\
    &\lesssim \sfrac12\Hinpr{p(-∆)^{\floor{|I|/2}}f, p(-∆)^{\floor{|I|/2} + 1}f} + \text{c.\,c.}\\
    &\TERM+\ ∑_{j=1}^{2d}∑_{|J|=\floor{|I|/2}}\|q^j_JD^Jf\|_{L^2(ℍ^d)}^2,
  \end{align*}
  for some homogeneous polynomials $q^j_J$ of order $m - 1 = m - |I| + |J|$.   Inserting this into the estimate above and estimate \pref{pDf}, we see that
  \begin{align*}
    \big\|pD^If\|_{L^2(ℍ^d)}^2 &\lesssim \sfrac12\Hinpr{p(-∆)^{\floor{|I|/2}}f, p(-∆)^{\floor{|I|/2} + 1}f} + \text{c.\,c.} \\
                               &\TERM+\  ∑_{|J|<|I|}\Hnorm{q_JD^Jf}^2 +  ∑_{j=1}^{2d}∑_{|J|=\floor{|I|/2}}\|q^j_JD^Jf\|_{L^2(ℍ^d)}^2\\
    &\TERM+ ∑_{j=1}^{2d}∑_{|J|<2\floor{|I|/2}}\Bigg[\Hnorm{E_jπ_JD^Jf}^2 + \Hnorm{π_JE_jD^Jf}^2\Bigg].
  \end{align*}
  The sums are all of the form $∑_{|J|<|I|}\|q_JD^Jf\|_{L^2(ℍ^d)}$ for polynomials $q_J$ of order $m - |I| + |J|$, so these terms can be treated with induction on $|I|$ to find the requested formula.
\end{proof}

\begin{proof}[Proof of Lemma \ref{stora polynomderivatshs}] 
   Before we apply Lemma \ref{polynomderivatshs}, we shall make use of the fact that, for even $m$, we can assume that $p(z,s) = (is - \frac14|z|^2)^{m/2}$. Thereafter applying this lemma to $pD^If$, we get a sum with polynomials of different degrees.  For the terms with odd $|J|$, we shall use the property that for homogeneous polynomials $q_J$, we can write $q_J^2 = ∑_iq_{J,i}^-q_{J,i}^+$, which is a finite sum where each $q_{J,i}^-$ has homogeneous degree one less than that of $q_J$, while the degree of $q_{J,i}^+$ is one more. This can always be done when the $q_J$ are of odd degree. When the $q_J$ are of even degree, we can do this for all polynomials except of those of the form $q_J(z,s) = s^{(m - |I| + |J|)/2}$, the square of which we cannot refactor into a product of two odd polynomials. 

  We start with treating the case when $|I|$ is even.  We find that there are polynomials $q_J$ of homogeneous degrees $m - |I| + |J|$ such that
\begin{align*}
  &\big\|pD^If\big\|_{L^2(ℍ^d)}^2 \lesssim ∑_{|J|≤|I|}\Hinpr{q_J(-∆)^{\floor{|J|/2}}f,\,q_J(-∆)^{\ceil{|J|/2}}f} + \text{c.\,c.}\\[.4em]
  &=∑_{\substack{|I|-m≤|J|≤|I|\\|J|\,\mathrm{even}}}2\Hnorm{q_J(-∆)^{|J|/2}f}^2\\
  &\TERM+ ∑_{\substack{|I|-m≤|J|<|I|\\|J|\,\mathrm{odd}}}∑_i\Hinpr{q_{J,i}^-(-∆)^{\floor{|J|/2}}f,\,q_{J,i}^+(-∆)^{\ceil{|J|/2}}f} + \text{c.\,c.}\\
  &\TERM+ ∑_{\substack{\text{$m$ odd}\\1≤2j+1<|I|\\m - |I| + 2j + 1 ≥ 0}}C_j\Hinpr{\ud s^{(m-|I|+2j+1)/2}(-∆)^jf,\,\ud s^{(m -|I|+2j+1)/2}(-∆)^{j+1}f}\\
  &\TERM+\text{c.\,c.}
\end{align*}
The even $|J|$ terms have polynomials of degree $m - |I| + |J|$, and therefore they are for even $m$ estimated from above by
\[
  \Hnorm{(i\ud s - \sfrac14|\ud z|^2)^{(m-|I|+|J|)/2}(-∆)^{|J|/2}f}^2.
\]
If $m$ is odd, we have polynomials of odd degree. To treat these we shall have use of the following claim:
For a homogeneous polynomial $p$ of odd degree $2n + 1$ and $g ∈ \Schwartz(ℍ^d)$,
\begin{equation}\label{udda polynom}
  \|pg\|_{L^2(ℍ^d)}^2 \lesssim \Big|\Hinpr{(i\ud s - \sfrac14|\ud z|^2)^ng,\,(i\ud s - \sfrac14|\ud z|^2)^{n+1}g}\Big|.
\end{equation}
To see this, notice first that we can assume that $p$ is monomial. Then $p(z,s) = \sfrac12z_iq(z,s)$ for some $i ∈ \{1,\ldots,2d\}$ and some monomial $q$ of degree $2m$. Hence
\begin{align*}
  \Hnorm{pg}^2
  &= ∫_{ℍ^d}\sfrac14z_i^2|q(v)|^2|g(v)|^2\de v \lesssim  ∫_{ℍ^d}\sfrac14z_i^2\big|is - \sfrac14|z|^2\big|^{2n}|g(v)|^2\de v\\
  &=\Hnorm{\sfrac12\ud z_i(i\ud s - \sfrac14|\ud z|^2)^ng}^2 ≤ ∑_{i=1}^{2d} \Hnorm{\sfrac12\ud z_i(i\ud s - \sfrac14|\ud z|^2)^ng}^2\\
  &=\Hinpr{(i\ud s - \sfrac14|\ud z|^2)^ng,\, (\sfrac14|\ud z|^2 - i\ud s)(i\ud s - \sfrac14|\ud z|^2)^ng} + \text{c.\,c.},
\end{align*}
as requested. We therefore have, with $n = (m - |I| + |J|-1)/2$,
\begin{align*}
  &2\Hnorm{q_J(-∆)^{|J|/2}f}^2 \lesssim \\
  &\qquad\Big|\Hinpr{(i\ud s - \sfrac14|\ud z|^2)^n(-∆)^{|J|/2}f,\,(i\ud s - \sfrac14|\ud z|^2)^{n+1}(-∆)^{|J|/2}f}\Big|\\[.4em]
  &= \Big|\Big\langle(i\ud s - \sfrac14|\ud z|^2)^{\floor{(m - |I| + |J|)/2}}(-∆)^{|J|/2}f,\\
  &\hspace{4em}(i\ud s - \sfrac14|\ud z|^2)^{\ceil{(m - |I| + |J|)/2}}(-∆)^{|J|/2}f\Big\rangle_{L^2(ℍ^d)}\Big|.
\end{align*}

In the terms with odd $|J|$, we have polynomials $q_J^-$ and $q_J^+$ of homogeneous degrees $m - |I| + |J| - 1$ and $m - |I| + |J| + 1$, respectively. When $m$ is even, so are the polynomials, and we have
\begin{align}
  &\FOERSTA\Hinpr{q_J^-(-∆)^{\floor{|J|/2}}f,\, q_J^+(-∆)^{\ceil{|J|/2}}f} + \text{c.\,c.}\notag\\
  &\lesssim \Hnorm{q_J^-(-∆)^{\floor{|J|/2}}f}^2 + \Hnorm{q_J^+(-∆)^{\ceil{|J|/2}}f}^2\label{udda J}\\
  &\lesssim \Hnorm{(i\ud s - \sfrac14|\ud z|^2)^{(m - |I| + |J| - 1)/2}(-∆)^{(|J|-1)/2}f}^2\notag\\
  &\TERM + \Hnorm{(i\ud s - \sfrac14|\ud z|^2)^{(m -|I| + |J| + 1)/2}(-∆)^{(|J|+1)/2}f}^2.\notag    
\end{align}
When $m$ is odd, we use this reasoning together with the odd polynomial claim \pref{udda polynom}. These are of the same kind of expressions as the case with even $|J|$.

For the third sum, we claim that, for any natural number $n = (m - |I|+2j+1)/2$,
\[
  s^{2n} = \big(s + i\sfrac14|z|^2\big)^{2n} + ∑_i q_i^-q_i^+,
\]
for some homogeneous polynomials $q_i^-$, $q_i^+$ of degrees $4n-1$ and $4n+1$. This is clear from the expansion of $(s + i\sfrac14|z|^2)^{2n}$, since all terms except $s^{2n}$ include first degree factors. The inner products coming from the leading term are then of the requested form
\begin{align*}
  &\Hinpr{(i\ud s - \sfrac14|\ud z|^2)^{(m-|I|+2j+1)/2}(-∆)^jf,\,(i\ud s - \sfrac14|\ud z|^2)^{(m-|I|+2j+1)/2}(-∆)^{j+1}f}\\
  &\hspace{14em}+\ \text{c.\,c.},
\end{align*}
and the rest terms can be treated as the other ones with odd $|J|$.

We continue with the cases with odd $|I|$. We have as before, using Lemma \ref{polynomderivatshs}, that there are polynomials $q_J$ of homogeneous degrees $m - |I| + |J|$ and $q_{J,i}^-$, $q_{J,i}^+$ of degree one less and one more, such that
\begin{align*}
  & \big\|pD^If\big\|_{L^2(ℍ^d)}^2 \lesssim \\[.4em]
  &\ANDRA∑_{|J|≤|I|}\Hinpr{q_J(-∆)^{\floor{|J|/2}}f,\,q_J(-∆)^{\ceil{|J|/2}}f} + \text{c.\,c.}\\
  &=∑_{\substack{|I|-m<|J|<|I|\\|J|\,\mathrm{even}}}2\Hnorm{q_J(-∆)^{|J|/2}f}^2\\
  &\TERM+ ∑_{\substack{|I|-m<|J|≤|I|\\|J|<|I|\text{ if $m$ even}\\|J|\,\mathrm{odd}}}∑_i\Hinpr{q_{J,i}^-(-∆)^{\floor{|J|/2}}f,\,q_{J,i}^+(-∆)^{\ceil{|J|/2}}f} + \text{c.\,c.}\\
  &\TERM+ ∑_{\substack{\text{\rule{0pt}{.7em}$m$ even}\\1≤2j+1<|I|\\ m - |I| + 2j + 1≥0}}C_j\Hinpr{\ud s^{(m-|I|+2j+1)/2}(-∆)^jf,\,\ud s^{(m+|I|+2j+1)/2}(-∆)^{j+1}f}\\
  &\TERM+\text{c.\,c.}\\
  &\TERM+ \text{for even $m$:}\ \Hinpr{p(-∆)^{\floor{|I|/2}}f,\,p(-∆)^{\ceil{|I|/2}}f} + \text{c.\,c.}
\end{align*}
This case is similar to that of even $|I|$, and the same techniques can be applied. The only essential difference from the even case can be found in the last line, where the leading terms are written for even $m$. If we would apply estimates \pref{udda J} to these terms with $|J| = |I|$, we would get a polynomial of power larger than $m$, and this would be a problem later on. But by our assumptions in the beginning of the proof, these terms are already of the requested form. 
\end{proof} 

We end this preparatory section with a version of Taylor's theorem for the Heisenberg group. So far we have only considered left-invariant vector fields $X,Y,S,E,D$, but there are of course also right-invariant counterparts. We shall denote these $\tilde X, \tilde Y, \tilde E, \tilde D$, while $S$ is both left- and right-invariant. As all right-invariant operators, they are given by left-convolution with a distribution, and they commute with left-invariant operators. 
For the upcoming claim, we introduce the following notion.
\begin{definition}
  The left- and right-invariant \emph{homogeneous directional derivatives} of degree $n$ in direction $v ∈ ℍ^d$ of a function $f : ℍ^d → ℂ$ evaluated at $w ∈ ℍ^d$ are, respectively,
  \[
    M^n_vf(w) = ∂^n_t f(w \go δ_t(v))\big|_{t=0},\quad   \tilde M^n_vf(w) = ∂^n_t f(δ_t(v) \go w)\big|_{t=0}.
  \]
\end{definition}
One could also reasonably call these objects the left and right Taylor polynomial terms, as the following theorem shows. It is the analogue of Taylor's theorem on Euclidean space. There is also a theorem in \cite{FollandS} denoted Taylor's theorem for homogeneous groups. That result is however not explicit, as it only gives a rough estimate of the error term, which does not suffice for our purpose.

\begin{theorem}\label{Taylors s}
  $M^n_v$ and $\tilde M^n_v$ are differential operators of homogeneous degree $n$, and for any $f : ℍ^d → ℂ$ such that $M^n_vf(w)$ or $\tilde M^n_vf(w)$ exist at $w ∈ ℍ^d$, these expressions are as functions of $v$ polynomials of homogeneous degree $n$.

  Furthermore, for all $n ∈ ℕ$, $w,v ∈ ℍ^d$ and $f ∈ \Continuous^{n+1}(ℍ^d)$ we have
  \begin{align*}
    f(w \go v) &= ∑_{j=0}^n \frac1{j!} M_v^jf(w) + \frac1{n!}∫_0^1(1-t)^n∂_t^{n+1}f(w \go δ_t(v))\de t\\
               &= ∑_{j=0}^n \frac1{j!} \tilde M_w^jf(v) + \frac1{n!}∫_0^1(1-t)^n∂_t^{n+1}f(δ_t(w) \go v)\de t.
  \end{align*} 
\end{theorem}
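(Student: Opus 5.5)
The plan is to reduce both Taylor formulas to the one-variable Taylor theorem with integral remainder, applied to $g(t) = f(w\go δ_t(v))$ (respectively $\tilde g(t) = f(δ_t(w)\go v)$). This gives
\[
  g(1) = ∑_{j=0}^n \frac{g^{(j)}(0)}{j!} + \frac1{n!}∫_0^1(1-t)^ng^{(n+1)}(t)\de t .
\]
By the definition of $M^j_v$, we have $g^{(j)}(0) = M^j_vf(w)$ and $g(1) = f(w\go v)$. Once $f ∈ \Continuous^{n+1}$, so that $g ∈ \Continuous^{n+1}([0,1])$, the displayed formula is exactly the first identity. For the second identity we expand in the left factor with $v$ fixed: $\tilde g(0) = f(v)$, $\tilde g(1) = f(w\go v)$ and $\tilde g^{(j)}(0) = \tilde M^j_wf(v)$. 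So the formulas themselves are immediate, and the real work is the structural claim about $M^n_v$.

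For the structural claim, write $v = (z,s)$. Then
\[
  w\go δ_t(v) = \big(z_w + tz,\ s_w + t^2 s + \tfrac t2 (y_w·x - x_w·y)\big).
\]
I would express the curve $t\mapsto w\go δ_t(v)$ through left-invariant fields, using the fact that left translation by $w$ commutes with left-invariant fields. One has $δ_t(v) = \exp(t\,z·E + t^2 sS)$, since the exponential coordinates here are exactly the given coordinates. Because $S$ is central, this gives
\[
  f(w\go δ_t(v)) = \big(e^{t(z·E) + t^2 sS}f\big)(w),
\]
understood formally, or more safely through $\frac{d}{dt} f(w\go δ_t(v)) = ((z·E + 2tsS)f)(w\go δ_t(v))$. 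To verify this derivative, I would differentiate the coordinates directly and compare with $X = ∇_x + \frac12y∂_s$ and $Y = ∇_y - \frac12x∂_s$ evaluated at the point $w\go δ_t(v)$. The $\frac12$ cross terms should match, because $z·E$ at the point $(z_w+tz,\dots)$ contributes $\frac12((y_w+ty)·x - (x_w+tx)·y)\,∂_s = \frac12(y_w·x - x_w·y)\,∂_s$.

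Iterating the derivative formula by induction gives $∂_t^n f(w\go δ_t(v)) = (P_n(t)f)(w\go δ_t(v))$, where $P_{n+1}(t) = P_n'(t) + (z·E + 2tsS)P_n(t)$ and $P_0 = 1$. Since $z·E$ and $S$ commute, setting $t=0$ gives
\[
  M^n_v = ∑_{a+2b=n}\frac{n!}{a!\,b!}(z·E)^a(sS)^b .
\]
Each term is a differential operator of homogeneous degree $a + 2b = n$, built from $E$ (degree 1) and $S$ (degree 2). Its coefficients are polynomials in $v$ of homogeneous degree $a+2b = n$, since $z$ has degree 1 and $s$ has degree 2. This proves the first part for $M^n_v$, with the expression a polynomial in $v$ whenever the derivatives exist at $w$.

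The right-invariant case is the same argument with $\tilde E$ and the variable $w$ in the left slot. The step I expect to need most care is the commutation bookkeeping, that is, confirming that no extra $S$ terms appear from $[z·E, z·E]$. They do not, since $[z·E,z·E] = 0$ for the same vector $z$. This is why the coefficient formula is the clean binomial-type one above and is homogeneous.
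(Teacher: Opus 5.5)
Your proposal is correct and is essentially the paper's proof: the same chain-rule identity $\partial_t f(w\go\delta_t(v)) = \big((z\cdot E + 2tsS)f\big)(w\go\delta_t(v))$, iterated using that $S$ is central and evaluated at $t=0$, combined with the one-variable Taylor formula with integral remainder, which the paper rederives by repeated integration by parts. The differences are small: you make the coefficients explicit, and you read off polynomiality in $v$ directly from $(z\cdot E)^a(sS)^bf(w)$, which is simpler than the paper's detour of showing that homogeneous directional derivatives of order larger than $n$ in $v$ vanish; like the paper, you tacitly need $f$ to be, say, $\Continuous^n$ near $w$, not merely that $M^n_vf(w)$ exists.
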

\begin{proof}
  We show this only for the left-invariant expression, since the other is analogous. To see that $M^n_v$ is a differential operator of homogeneous degree $n$, we notice that
\begin{align*}
  \hspace{2em}&\hspace{-2em}∂_tf(v' \go δ_t(v)) =\\
                        &z·∇_{ℝ^{2d}}f(v' \go δ_t(v)) + \big(\sfrac12(y'·x - x'·y) + 2ts\big)Sf(v' \go δ_t(v))\\
  &= z·Ef(v' \go δ_t(v)) + 2ts\,Sf(v' \go δ_t(v)),
\end{align*}
where by $∇_{ℝ^{2d}}$ we mean the gradient in the first $2d$ variables. Consequently, if we keep in mind that $S$ commutes with the vector fields in $E$, we find that there are nonnegative integers $C_{a,b}$ such that
\[
  ∂_t^nf(w \go δ_t(v)) = ∑_{\substack{a+b ≤ n\\[.1em] 0≤b≤a}} C_{a,b}\,t^{a-b}(sS)^a(z·E)^{n-a-b}f(w \go δ_t(v)).
\]
Letting $t = 0$ picks out the part where $a=b$, so that
\[
  M^n_vf(w) = ∑_{a = 0}^{\lfloor  n/2\rfloor}C_a (sS)^a(z·E)^{n-2a}f(w),
\]
from which it is clear that $M^n_v$ is a differential operator of homogeneous degree $n$. In particular, if we let $e_i$ with $i = 1,\ldots,2d+1$ denote the coordinate unit vectors, we have
\begin{align*}
  M_{e_i}^n &= E_{e_i}^n\quad \text{for }i = 1,\ldots, 2d\\
  M_{e_{2d+1}}^{2n} &= C_nS^n\\
  M_{e_{2d+1}}^{2n+1} &= 0.
\end{align*}

As for the dependence on $v$, it is easy to see that $M^n_v$ is homogeneous of degree $n$. To see that it is polynomial, we shall show that the action of left-invariant vector fields of a sufficiently large homogeneous degree, $n + 1$, will result in the zero function. 

We see from the last equations above that we can use the $M_v^n$ form to express the left-invariant vector fields. We therefore let $u$ be a coordinate unit vector and study the homogeneous directional derivative of degree $j$ in the direction $v'$ of $v \mapsto M^n_vf(w)$. This derivative is
\begin{align*}
  &∂_t^jM^n_{v\,\go\, δ_t(v')}f(w)\big|_{t=0} =\\[.4em]
  &\ANDRA ∂_t^j\Bigg[∑_{a=0}^{\floor{n/2}}[(v\go δ_t(v'))_sS]^a[(v\go δ_t(v'))_z·E]^{n-2a}f(w)\Bigg]\Bigg|_{t=0}\\
  &= ∂_t^j\Bigg[∑_{a=0}^{\floor{n/2}}(s + s't^2 + \sfrac12(y·x' - x·y')t)^aS^a\big[[(z + tz') · E_i]^{n-2a}f\big](w)\Bigg]\Bigg|_{t=0},
\end{align*}
where the $j$th derivative of each term vanishes if $j > 2a + n - 2a = n$.

Finally, we have by repeated integration by parts that
\begin{align*}
  \hspace{-2em}\hspace{2.5em}&\hspace{-2.5em}\frac1{n!}∫_0^1(1-t)^n∂_t^{n+1}f(w \go δ_t(v))\de t =\\[-.5em]
                        &\frac1{n!}(1-t)^n\,∂_t^nf(w \go δ_t(v))\Big|_{t=0}^1 + \frac1{(n-1)!}∫_0^1(1-t)^{n-1}∂^n_tf(w \go δ_t(v))\de t\hspace{-2em}\\[.3em]
                        &= - ∑_{j=0}^n \frac1{j!}∂^j_tf(w \go δ_t(v))\Big|_{t=0} + f(w \go v).\qedhere
\end{align*}
\end{proof}

\section{Diagonalisation of the kernel $φ(-∆)δ$}\label{nyckeluppskattningsavdelning}
We saw in Lemma \ref{kaerna aer L2} that for $φ$ with sufficiently rapid decay, $φ(-∆)$ has a right convolution kernel that is a function. This kernel has the form
\begin{align*}
  φ(-∆)δ(v) &= \F ^{-1}\left[∑_{m ∈ ℕ^d} φ\big(|\ud λ|(d + 2|m|)\big)\,P^{\ud λ}_m\right](v)\\
          &= \frac1{(2π)^{d+1}}∫_ℝ |λ|^d ∑_{m ∈ ℕ^d} φ(|λ|(d + 2|m|)) \inpr{U^λ_vη^λ_m,\,η^λ_m}_{L^2(ℝ^d)}\de λ.
\end{align*}

The formula can be simplified if we use the following well known identity, see  e.\,g.\@ Theorem {\capfigures 1.3.4} of \cite{Thangavelu-Lectures}.

\begin{lemma}\label{Laguerrehs}
  \[
    ∑_{|m|=n}\inpr{U^λ_{z,s}η^λ_m,\,η_m^λ}_{L^2(ℝ^d)} = e^{-iλs}\, L^{d-1}_n\big(|λ||z|^2/2\big)\,e^{-|λ||z|^2/4},
  \]
  with $L^{d-1}_n$ the Laguerre polynomial of order $n$ and type $d-1$.
\end{lemma}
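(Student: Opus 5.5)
The plan is to reduce to $λ = ±1$ by scaling, factor out the $s$-dependence, and then identify both sides through generating functions in an auxiliary variable $r ∈ (0,1)$. On the Hermite side the generating function is the Mehler kernel; on the Laguerre side it is the classical formula
\[
  ∑_{n≥0}L^{d-1}_n(x)\,r^n = (1-r)^{-d}e^{-xr/(1-r)}.
\]

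\emph{Scaling.} Let $D_λα(q) = |λ|^{d/4}α(|λ|^{1/2}q)$, which is unitary on $L^2(ℝ^d)$ and satisfies $η^λ_m = D_λη_m$. A direct substitution in the formula for $U^λ_v$ gives
\[
  D_λ^{-1}U^λ_{(x,y,s)}D_λ = U^{\mathrm{sgn}\,λ}_{(|λ|^{1/2}x,\,|λ|^{1/2}y,\,|λ| s)}.
\]
So it suffices to treat $λ = ±1$ and then substitute $|z|^2 ↦ |λ||z|^2$ and $s ↦ |λ| s$.

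\emph{Removing the sign and the $s$-dependence.} The Hermite functions are real, so $λ = -1$ reduces to $λ = 1$ by complex conjugation. Possibly one also needs the reflection $y ↦ -y$, under which $|z|^2$ is unchanged. The factor $e^{iλ s}$ is a scalar in $U^λ_v$. Since the paper's inner product is antilinear in the first slot, this factor produces the $e^{-iλs}$ on the right-hand side. It therefore remains to show, for $s = 0$ and $λ = 1$,
\[
  F_n(z) := ∑_{|m| = n}\inpr{U^1_{z,0}η_m,\,η_m}_{L^2(ℝ^d)} = L^{d-1}_n(|z|^2/2)\,e^{-|z|^2/4}.
\]

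\emph{Generating functions.} Form $G_r(z) = ∑_n r^nF_n(z)$. This is the trace
\[
  G_r(z) = \tr\big[(U^1_{z,0})^*\, r^{(H_1-d)/2}\big], \qquad\text{or its conjugate.}
\]
The operator $r^{(H_1-d)/2}$ has the explicit Mehler kernel
\[
  K_r(q,q') = π^{-d/2}(1-r^2)^{-d/2}\exp\Big(-\tfrac{(1+r^2)(|q|^2+|q'|^2)-4r\,q·q'}{2(1-r^2)}\Big).
\]
The operator $U^1_{z,0}$ acts by a shift in $y$ and a modulation in $x$. Hence the trace equals $∫K_r(q+y,q)\,e^{\pm i(x·q + x·y/2)}\de q$, which is a Gaussian integral in $q$.

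Completing the square should give
\[
  G_r(z) = (1-r)^{-d}\exp\Big(-\tfrac{|z|^2(1+r)}{4(1-r)}\Big).
\]
Put $x = |z|^2/2$. Then the exponent is $-x/2 - xr/(1-r)$, so
\[
  G_r(z) = e^{-|z|^2/4}∑_n L^{d-1}_n(|z|^2/2)\,r^n
\]
by the Laguerre generating function. Comparing coefficients of $r^n$ proves the identity.

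Alternatively, one can compute the case $d = 1$ first. Then the product structure $η_m = ∏_jη_{m_j}$ and the identity $∏_j(1-r)^{-1}e^{-x_jr/(1-r)} = (1-r)^{-d}e^{-(∑_jx_j)r/(1-r)}$ give the summation formula $∑_{|m| = n}∏_jL^0_{m_j}(x_j) = L^{d-1}_n(∑_jx_j)$.

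\emph{Main obstacle.} The main work is the Gaussian computation of the Mehler trace, together with the sign and conjugation conventions: the sign of $λ$, the ordering of the inner product, and whether $x·y/2$ cancels correctly. The cancellation of the phase $x·y/2$ is what makes the result depend only on $|z|^2$. All of this should be checked carefully but is routine.
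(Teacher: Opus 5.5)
Your proposal is correct, and it goes further than the paper does here. The paper gives no argument for this lemma; it simply quotes Theorem~1.3.4 of Thangavelu's \emph{Lectures}. The standard derivation of that identity is essentially your alternative route: the one-dimensional Fourier--Wigner formula $\langle U_z h_k,h_k\rangle=L_k(|z|^2/2)e^{-|z|^2/4}$, followed by the Laguerre summation identity $\sum_{|m|=n}\prod_j L^0_{m_j}(x_j)=L^{d-1}_n(x_1+\dots+x_d)$.

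Your main route through Mehler's kernel treats all $n$ at once and reduces everything to a single Gaussian integral. It works exactly as you predict. At $\lambda=1$, $s=0$, substitute $u=q+y/2$. The phase becomes $e^{-ix\cdot u}$, which is where $x\cdot y/2$ is absorbed. The Mehler exponent at $(q+y,q)$ becomes $-\frac{1-r}{1+r}|u|^2-\frac{1+r}{4(1-r)}|y|^2$. The $u$-integral then gives precisely $(1-r)^{-d}\exp\bigl(-\frac{(1+r)|z|^2}{4(1-r)}\bigr)$, with all constants cancelling. Your conjugation identity $D_\lambda^{-1}U^\lambda_vD_\lambda=U^{\mathrm{sgn}\,\lambda}_{(|\lambda|^{1/2}z,\,|\lambda|s)}$ is also right. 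For $\lambda<0$ no reflection is needed, since $U^{-1}_v\alpha=\overline{U^1_v\alpha}$ for real $\alpha$ and the $s=0$ quantity is real.

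Beyond being self-contained, your argument verifies the identity in this paper's own conventions: the representation $U^\lambda_v$, the stretched $\eta^\lambda_m$, and the inner product antilinear in the first slot. That is what pins down the factor $e^{-i\lambda s}$ and the absence of any normalising constant. Such constants and signs vary between sources (special Hermite functions are often normalised with an extra $(2\pi)^{-d/2}$), and a bare citation leaves that bookkeeping to the reader.

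The remaining justifications are routine. The operator $r^{(H_1-d)/2}$ is trace class for $0<r<1$, for instance as the square of the Hilbert--Schmidt operator with kernel $K_{\sqrt r}$. So its trace against $U^*$ equals both $\sum_m r^{|m|}\langle U\eta_m,\eta_m\rangle$ and the diagonal integral of the kernel. Finally, $|F_n(z)|$ is at most the number of $m$ with $|m|=n$, so the series converges for $|r|<1$ and coefficients can be compared.
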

We refer to \cite{Lebedev} and \cite{Thangavelu-Lectures} for general properties of the Laguerre polynomials. 
For rapidly decreasing $φ$, we can therefore write the right convolution kernel of $φ(-∆)$ as
\begin{align*}
  &φ(-∆)δ(z,s) = \\
  &\ANDRA\frac1{(2π)^{d+1}}∫_ℝ |λ|^d e^{-iλs}∑_{n=0}^∞ φ(|λ|(d + 2n))\,L^{d-1}_n(|λ||z|^2/2)e^{-|λ||z|^2/4}\de λ.
\end{align*}

The Laguerre polynomials form an orthogonal basis of $L^2(ℝ_+, w)$ with the weight $w(t) = t^{d-1}e^{-t}$. Namely, we have
\begin{align*}
  \frac{(d-1 + n)!}{n!}\,δ_{nm} \hspace{-.5em}&\hspace{.5em}= ∫_0^∞ L^{d-1}_n(t)L^{d-1}_m(t)\,t^{d-1}e^{-t}\dd t\\
                                  &= \frac1{2^{d-1}}∫_0^∞ L^{d-1}_n(|λ|r^2/2)L^{d-1}_m(|λ|r^2/2)e^{-|λ|r^2/2}r^{2d-1} |λ|^d\de r\\
                                  &= \frac{(d-1)!}{(2π)^d}|λ|^d ∫_{ℝ^{2d}} L^{d-1}_n(|λ||z|^2/2)L^{d-1}_m(|λ||z|^2/2)e^{-|λ||z|^2/2} \de z.
\end{align*}
We say that a function $f : ℍ^d → ℂ$ with $f(z,s) = \tilde f(|z|,s)$ for some $\tilde f ∈ L^2(ℝ_+×ℝ)$ is \emph{biradial}. For such an $f$, we conclude from the above that we have the decomposition
\begin{equation}\label{G}
  f = \frac1{(2π)^{d+1}}∫_ℝ |λ|^d ∑_{n=0}^∞ \combin{d - 1 + n}n\, \inpr{ℓ_n^λ,f}_{L^2(ℍ^d)}\, ℓ_n^λ\de λ =: \G ^{-1}\G f
\end{equation}
with the operator $\G$ defined from
\[
  \G f(n,λ) = \inpr{ℓ_n^λ,f}_{L^2(ℍ^d)}
\]
and
\[
  ℓ_n^λ(z,s) = \combin{d-1+n}n^{-1}\,e^{-iλs - |λ||z|^2/4}\,L^{d-1}_n(|λ||z|^2/2).
\]
In more generality, we say that a distribution $ω ∈ \Schwartz'(ℍ^d)$ is biradial if there is a map $Ω : ℕ × ℝ → ℂ$ such that
\[
  \F ω(λ) = ∑_{m ∈ ℕ^d} Ω(|m|,λ)P_m^λ.
\]
For these, we put $\G ω = Ω$. Examples are the kernels $φ(-∆)δ$, for which
\[
  \G[φ(-∆)δ](n,λ) = φ(|λ|(d + 2n)).
\]
This definition coincides with the definition of biradiality and that of $\G$ for functions, since if $ω$ is a function, this means that
\[
  \F ω(λ) = ∑_{m∈ℕ^d}Ω(|m|,λ)P_m^λ = ∑_{n=0}^∞\Gω(n,λ)∑_{|m|=n}P^λ_m,
\]
and if we apply $\F^{-1}$ to both sides, Lemma \ref{Laguerrehs} and identity \pref G show that the two definitions are the same. In particular, every biradial distribution is diagonal in the $η^λ_m$ basis, with equal coefficients for equal length of $m$.
A straight-forward computation shows that any distribution $ω$ such that $\F ω$ is diagonal in the $η_m^λ$ basis satisfies the dilation identity $\F^{-1}[\F ω(t^2\ud λ)] = t^{-Q}ω\circ δ_{1/t}$ for all nonzero $t$, which can be compared to the dilation behaviour in Euclidean space. In particular,  biradial distributions satisfy $\G^{-1}[\G ω(\ud n,t^2\ud λ)] = t^{-Q} ω\circ δ_{1/t}$, and for the kernels of $φ(-∆)$, this identity becomes
\begin{equation}\label{multiplikatorutdragning2}
  φ(t^2(-∆))δ = t^{-Q}φ(-∆)δ \circ δ_{1/t}.    
\end{equation}

Of particular interest to us will also be that the Plancherel theorem for biradial functions becomes the \hypertarget{biradial}{diagonal expression}
\begin{equation*}\label{Plancherel biradiell}
  \|f\|_{L^2(ℍ^d)}^2 = \frac1{(2π)^{d+1}} ∫_ℝ |λ|^d ∑_{n=0}^∞ \combin{d-1+n}n\,\left|\G f(n,λ)\right|^2\de λ
\end{equation*}
and polarisation leads to the inner product identity
\begin{equation*}\label{inpr biradiell}
  \inpr{f,g}_{L^2(ℍ^d)} = \frac1{(2π)^{d+1}} ∫_ℝ |λ|^d ∑_{n=0}^∞ \combin{d-1+n}n\,\overline{\G f(n,λ)}\G g(n,λ)\de λ.
\end{equation*}
We shall refer to these as the biradial Plancherel's theorem and Parseval's formula, respectively.

If we now look back at Lemma \ref{stora polynomderivatshs}, we see that for biradial $f$, we could apply the biradial Parseval's formula above if we knew the effect of position space multipliers on $\G$. The next lemma, which is the cornerstone of the present analysis, shows how we can translate such multipliers into derivatives on the biradial frequency side, when analysing the convolution kernel of $φ(-∆)$. It is inspired by the work of S.~Thangavelu \cite{Thangavelu-Multiplier} and that of Bahouri, P.~Gérard and C.-J.~Xu \cite{BahouriGX}.

\begin{lemma}\label{nyckelhs}
  For $m ∈ ℕ$ and $φ ∈ \Continuous^{2m}([0,∞))$ with $φ(-∆)δ ∈ L^2(ℍ^d)$, we have
  \[
    \big|\G [(i\ud s - \sfrac14|\ud z|^2)^mφ(-∆)δ](n,λ)\big| ≤ C_m ∑_{ℓ\, =\, 0}^m\, \sup_{|ω|≤2m}\big|(nλ)^ℓ∂^{m+ℓ}φ\big(|λ|(d + 2n + ω)\big)\big|.
  \]
\end{lemma}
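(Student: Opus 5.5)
The plan is to compute the action of multiplication by $is - \frac14|z|^2$ directly on the biradial basis $ℓ^λ_n$. Applying it $m$ times then turns it into a difference-differential operator in $(n,λ)$ acting on $\G[φ(-∆)δ](n,λ) = φ(|λ|(d+2n))$. The errors of finite differences are finally controlled by the mean value theorem, which produces the suprema over $|ω| ≤ 2m$.

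\emph{Step 1: the basic identity.} Set $r = |λ||z|^2/2$. Since $ℓ^λ_n$ depends on $λ$ through $e^{-iλs}$ and through $r$, differentiating in $λ$ gives
\[
  ∂_λ ℓ^λ_n = -i s\, ℓ^λ_n + \operatorname{sgn}(λ)\frac{|z|^2}{2}\Big(\tfrac{\dd}{\dd r}\big[L^{d-1}_n(r)e^{-r/2}\big]\Big)\binom{d-1+n}{n}^{-1}e^{-iλs}.
\]
The classical Laguerre identities $rL_n^{d-1\prime} = nL^{d-1}_n - (n+d-1)L^{d-1}_{n-1}$ and $rL^{d-1}_n = (2n+d)L^{d-1}_n - (n+1)L^{d-1}_{n+1} - (n+d-1)L^{d-1}_{n-1}$ then rewrite $|z|^2 ℓ^λ_n$ and $|z|^2∂_r$ in terms of $ℓ^λ_{n-1}, ℓ^λ_n, ℓ^λ_{n+1}$ with coefficients of size $n/|λ|$. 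This yields an identity of the form
\[
  (is - \tfrac14|z|^2)\,ℓ^λ_n = -∂_λ ℓ^λ_n + \frac{1}{|λ|}\big(a_n ℓ^λ_{n+1} + b_n ℓ^λ_n + c_n ℓ^λ_{n-1}\big),
\]
with explicit $a_n, b_n, c_n = O(n+1)$. The combination $is - \frac14|z|^2$ is precisely what makes the $\frac14|z|^2$ from the Gaussian cancel against the derivative of $e^{-|λ||z|^2/4}$, leaving only Laguerre-recurrence terms. I would verify this, and in particular that the coefficients organise as a second difference in $n$, possibly plus a first difference.

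\emph{Step 2: transfer to $\G$.} Take the inner product with $f = φ(-∆)δ$ and use real-valuedness and self-adjointness of the multiplier, first for $f$ rapidly decaying (justified by $φ(-∆)δ ∈ L^2$ and approximation). This gives
\[
  \G[(is - \tfrac14|z|^2)f](n,λ) = ∂_λ\G f(n,λ) + \frac1{|λ|}\big(\text{three-term combination of }\G f(n\pm1,λ), \G f(n,λ)\big),
\]
with the binomial normalisations adjusting the coefficients. Inserting $\G f(n,λ) = φ(|λ|(d+2n))$, the $λ$-derivative yields $(d+2n)φ'$. The difference terms, once combined, form $n$ times a second difference of $φ$ with step $2|λ|$, divided by $|λ|$, plus a first difference. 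A second difference is $O(λ^2 \sup|φ''|)$, so these pieces are bounded by $nλ\,φ''$ and $φ'$ evaluated at shifted points $|λ|(d+2n+ω)$ with $|ω| ≤ 2$. The crucial point is that the $(d+2n)φ'$ from $∂_λ$ must cancel against the $O(n)$ first-difference part, leaving only $φ'$ and $nλφ''$. That cancellation is the content of the $m=1$ case.

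\emph{Step 3: iteration.} Write $T$ for the difference-differential operator from Step 2, so the left side of the lemma is $T^m$ applied to $φ(|λ|(d+2n))$. Expand each iterate by Taylor's formula with integral (mean-value) remainder. Track by induction the structure: a sum of terms $(nλ)^ℓ ∂^{m+ℓ}φ$ at points shifted by at most $2m$, with bounded coefficients. The induction step needs that $T$ applied to $(nλ)^ℓ ψ(|λ|(d+2n+ω))$ with $ψ = ∂^{m+ℓ}φ$ produces only $(nλ)^{ℓ}∂ψ$ and $(nλ)^{ℓ+1}∂^2ψ$ terms, plus lower-order terms absorbed similarly. I would handle this by writing $T$ as a discrete version of the operator $∂_σ + σ∂_σ^2$ in the variable $σ = |λ|(d+2n)$. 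The $n$-shifts are stable because $n\pm1 \sim n$ for $n ≥ 1$; $n=0$ is treated separately.

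The main obstacle I expect is Step 1 together with the cancellation in Step 2: getting the exact Laguerre coefficients so that the $O(n)φ'$ terms cancel, without losing a power of $n/|λ|$. A careless bound would give $n^ℓ$ without the compensating $λ^ℓ$ and would break the estimate. A secondary issue is justifying differentiation in $λ$ near $λ = 0$ and the approximation for $φ$ only in $\Continuous^{2m}$.
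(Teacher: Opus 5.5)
Your plan is the paper's own argument: turn multiplication by $is-\frac14|z|^2$ into a difference-differential operator in $(n,\lambda)$ by differentiating the Laguerre basis in $\lambda$. You then cancel the $(d+2n)\varphi'$ term against the $O(n/\lambda)$ difference using $\partial_\lambda\G f=(\frac n\lambda+\frac d{2\lambda})\partial_n\G f$, and iterate with integral Taylor remainders, each step lowering the power of $n\lambda$, adding one derivative, or raising the power and adding two derivatives. One bookkeeping remark: since $\G$ pairs against $\overline{\ell^\lambda_n}$, the identity you actually want is for $(is-\frac14|z|^2)\overline{\ell^\lambda_n}$. For $\lambda>0$ it needs only $t\partial L^{d-1}_n=nL^{d-1}_n-(d-1+n)L^{d-1}_{n-1}$ and gives the two-term formula $\partial_\lambda\G f(n,\lambda)-\frac n\lambda\big(\G f(n,\lambda)-\G f(n-1,\lambda)\big)$, so the second-order remainder appears only after that cancellation rather than as a second difference.
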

  With $\Continuous^{2m}([0,∞))$ we mean that the continuous derivatives exist in $(0,∞)$ with a finite limit at $0$. One can then extend the domain of $φ$ to a neighbourhood of zero using a Taylor expansion, and multiply with a smooth cut-off function so that the extended function vanishes for arguments smaller than some negative number. Thus we can extend $φ$ to a function in $\Continuous^{2m}(ℝ)$. We shall assume that this is done. For the purposes of the rest of this article, we shall always have $∂^mφ(0) = 0$ for all derivatives up to arbitrarily large $m$, and we may then simply extend $φ$ to vanish for negative arguments.
\begin{proof}
  We write $f = φ(-∆)δ$, which we by density can assume to be Schwartz. It is clear that the mapping $F \mapsto \G [(i\ud s - \sfrac14|\ud z|^2)\G^{-1}F]$ takes $\G(\Schwartz(ℍ^d))$ into $\G(\Schwartz(ℍ^d))$, so we can in particular perform this operation as many times as we like.

  Take for the moment $λ > 0$. Then
\begin{align*}
  \G [(i\ud s &- \sfrac14|\ud z|^2)f](n,λ) = ∫_{ℍ^d}(is - \sfrac14|z|^2)\overline{ℓ_n^λ(v)} f(v)\de v\\
                    &= \combin{d-1+n}n^{-1}∫_{ℍ^d}∂_λ(e^{λ(is -|z|^2/4)})L^{d-1}_n(λ|z|^2/2)f(v)\de v\\
                           \hspace{1em}&\hspace{-1em}= ∂_λ\G f(n,λ) 
                   -  \sfrac12|z|^2\,\combin{d-1+n}n^{-1}∫_{ℍ^d}e^{iλ(s-|z|^2/4)}∂L^{d-1}_n(λ|z|^2/2)f(v)\de v.
\end{align*}
Since the Laguerre polynomial of degree $0$ is constant, the last term above vanishes when $n = 0$. For larger degrees, we use that the Laguerre polynomials satisfy
\[
  t∂L_n^{d-1}(t) = n L_n^{d-1}(t) - (d - 1 + n)L_{n-1}^{d-1}(t)
\]
and therefore
\[
  \G \big[(i\ud s - \sfrac14|\ud z|^2)f\big](n,λ) = ∂_λ\G f(n,λ) - \frac nλ \Big(\G f(n,λ) - \G f(n-1,λ)\Big).
\]
When $λ < 0$, one can similarly utilise the identity
\[
  t∂L_n^{d-1}(t) - tL_n^{d-1}(t) = (n + 1)L_{n+1}^{d-1}(t) - (d + n)L_n^{d-1}(t),
\]
and with this deduce
\[
  \G \big[(i\ud s - \sfrac14|\ud z|^2)f\big](n,λ) = ∂_λ\G f(n,λ) - \frac{d + n}λ \Big(\G f(n + 1,λ) - \G f(n,λ)\Big).
\]
We shall in the following show only the case when $λ$ is positive. The reasoning for negative $λ$ is similar.

The next step is to observe that since $\G f(n,λ) = φ(λ(d + 2n))$, we can let $n$ take real values and exploit that
\[
  ∂_λ \G f(n,λ) = \bigg(\frac nλ + \frac d{2λ}\bigg)\,∂_n \G f(n,λ).
\]
From this follows that
\begin{align*}
  \G \big[(i\ud s &- \sfrac14|\ud z|^2)f\big](n,λ) = \frac d{2λ} ∂_n\G f(n,λ) + \frac nλ∫_0^1 (1-θ)∂_n^2\G f(n - θ, λ)\de θ\\
                  &= d\,∂φ(λ(d + 2n)) + 4nλ∫_0^1(1 - θ)∂^2φ\big(λ(d + 2(n-θ))\big)\de θ.
\end{align*}
We would now like to repeat this argument for higher powers of $is - \frac14|z|^2$, and this can be done right away with the first term. For the second term, however, the relation between $∂_λ\G f$ and $∂_n\G f$ as above does no longer hold. For $j,k,ℓ ∈ ℕ$ and $p$ a polynomial, let $g_{j,k,ℓ,p}$ denote the formula
\[
  g_{j,k,ℓ,p}(n,λ) = (nλ)^j∫_{[0,1]^k}p(θ_1,\ldots,θ_k)∂^ℓφ\big(λ(d + 2(n - θ_1 - \cdots - θ_k))\big)\de θ_1\cdots\dd θ_k.
\]
Here it is important that $φ$ is defined also for negative arguments. This expression satisfies
\[
  ∂_λg_{j,k,ℓ,p}(n,λ) = \frac nλ∂_ng_{j,k,ℓ,p}(n,λ) + g_{j,k,ℓ+1,q}(n,λ)
\]
with $q(θ_1,\ldots,θ_k) = (d - 2θ_1 - \cdots - 2θ_k)p(θ_1, \ldots,θ_k)$. In the case $k = 0$, we simply have $q = d$.
The same reasoning as above then yields that
\begin{align*}
  \G\big[(i\ud s - \sfrac14|\ud z|^2)&\G^{-1}g_{j,k,ℓ,p}\big](n,λ) =\\
                                     &g_{j,k,ℓ+1,q}(n,λ) + \frac nλ∫_0^1(1-θ_{k+1})∂_n^2g_{j,k,ℓ,p}(n-θ_{k+1},λ)\de θ_{k+1}\\
                                     &= g_{j,k,ℓ+1,q}(n,λ) + j(j-1) g_{j-1,k+1,ℓ,π}(n,λ) \\[.4em]
  &\quad+ 4jg_{j,k+1,ℓ+1,π}(n,λ) + 4g_{j+1,k+1,ℓ+2,π}(n,λ),
\end{align*}
where $π$ denotes the polynomial in $m+1$ variables given by $π(θ_1,\ldots,θ_{m+1}) = (1-θ_{m+1})p(θ_1,\ldots,θ_m)$.  

We thus find four terms of the same general form, and the process can be repeated. Notice that at each step we either reduce the power of $nλ$ (if this is nonzero), add a derivative to $φ$, or increase the power of $nλ$ and add two derivatives to $φ$. Combining the contribution from the three types of terms and applying the mean value theorem, we arrive at the requested estimate.
\end{proof} 

We follow up this result with a minor extension. Since $∆$ and $S$ commute and $\F(Sf)(λ) = -iλ\F f(λ)$, one can define the operators $φ(-∆,S)$ by extending Definition \ref{def multiplikator} in the obvious way. These operators clearly have biradial kernels, and minor adjustments to the proof above provides the following estimate.
\begin{corollary}\label{utoekad nyckelhs}
  For $m ∈ ℕ$ and $φ ∈ \Continuous^{2m}([0,∞)×ℝ\setminus\{0\})$ with $φ(-∆, S)δ ∈ L^2(ℍ^d)$, we have
  \begin{align*}
    &\big|\G [(i\ud s - \sfrac14|\ud z|^2)^mφ(-∆,S)δ](n,λ)\big| ≤ \\
    &\qquad C_m ∑_{ℓ\, =\, 0}^m\, \sup_{|ω|≤2m}\big|(nλ)^ℓ∑_{|α|=m+ℓ}∂^αφ\big(|λ|(d + 2n + ω), -iλ\big)\big|.
  \end{align*}
\end{corollary}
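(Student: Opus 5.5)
The plan is to rerun the proof of Lemma \ref{nyckelhs} with the two-variable symbol. First I would record the biradial frequency side of $f = φ(-∆,S)δ$. Since $S$ commutes with $∆$ and $\F(Sf)(λ) = -iλ\F f(λ)$, the extended definition gives
\[
  \G f(n,λ) = φ\big(|λ|(d+2n),\, -iλ\big).
\]
As before, by density I may assume $f$ is Schwartz. I would then fix $λ > 0$; the case $λ < 0$ uses the other Laguerre identity exactly as in the lemma.

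The first step in the lemma's proof does not use the form of $\G f$ at all. It only uses the Laguerre recurrence, and gives
\[
  \G[(i\ud s - \sfrac14|\ud z|^2)f](n,λ) = ∂_λ\G f(n,λ) - \frac nλ\big(\G f(n,λ) - \G f(n-1,λ)\big).
\]
This carries over verbatim. The change appears in the relation between $∂_λ$ and $∂_n$. Writing $φ = φ(σ,τ)$ and extending $n$ to real values,
\[
  ∂_λ\G f = \Big(\frac nλ + \frac d{2λ}\Big)∂_n\G f + \big(-i\,∂_τφ\big)\big(λ(d+2n), -iλ\big).
\]
So, compared with the lemma, each application of the multiplier produces one extra term. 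In that term one derivative falls on the second variable and there is no power of $nλ$.

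Next I would generalise the bookkeeping functions to
\[
  g_{j,k,α,p}(n,λ) = (nλ)^j∫_{[0,1]^k}p(θ)\,∂^αφ\big(λ(d+2(n-θ_1-\cdots-θ_k)),\,-iλ\big)\de θ,
\]
with $α ∈ ℕ^2$ a multi-index. Differentiating in $λ$ gives
\[
  ∂_λ g_{j,k,α,p} = \frac nλ∂_n g_{j,k,α,p} + g_{j,k,α+(1,0),q} - i\,g_{j,k,α+(0,1),p},
\]
where $q$ is the same polynomial as in the lemma. Inserting this into the recurrence and Taylor expanding the difference in $n$ to second order yields the four terms of the lemma's recursion, now with $ℓ$ replaced by $|α|$, plus one new term $g_{j,k,α+(0,1),p}$.

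I would then run an induction on $m$ with the invariant that every generated term satisfies $|α| - j ≥ 0$, where each application of the multiplier raises $|α| - j$ by at least one. Checking this move by move:
\begin{itemize}
  \item lowering $j$ by one raises $|α| - j$ by one;
  \item adding one derivative, in either variable, raises it by one;
  \item raising $j$ by one while adding two derivatives raises it by one.
\end{itemize}
After $m$ steps every term is bounded by $\sup_θ |(nλ)^ℓ∂^αφ(\cdots)|$ with $|α| = m+ℓ$ and $ℓ ≤ m$. The shifts satisfy $θ_1 + \cdots + θ_k ∈ [0,m]$, so they are absorbed into $|ω| ≤ 2m$. The number of terms and the polynomial weights depend only on $m$ and $d$, which gives $C_m$.

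The main obstacle is matching the stated right-hand side. There the derivatives of a fixed total order appear summed inside the absolute value, $\big|\sum_{|α|=m+ℓ}∂^αφ\big|$, not as a sum of absolute values. The natural output of the recursion is a sum of separate terms with different multinomial weights and different $θ$-integrals, which only gives $\sum_{|α|=m+ℓ}\sup|∂^αφ|$. If that reading fails, I would interpret the corollary's bound as this sum of absolute values, which is the version actually needed later. A secondary obstacle is the regularity of $φ$ near $τ = 0$, since $φ$ is only assumed $\Continuous^{2m}$ for $τ \ne 0$. Because $λ \ne 0$ throughout, the second argument $-iλ$ stays away from zero, and no extension in $τ$ is required.
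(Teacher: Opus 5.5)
Your proposal is correct, and it is exactly the ``minor adjustment'' of the proof of Lemma~\ref{nyckelhs} that the paper has in mind: the only new ingredient is the term $-i\,g_{j,k,\alpha+(0,1),p}$ that appears when $\partial_\lambda$ hits the second argument, and your invariant that $|\alpha|-j$ grows by exactly one per application correctly gives $|\alpha|=m+\ell$ with $\ell\le m$. You were right to hesitate over the right-hand side: the correct reading is $\sum_{|\alpha|=m+\ell}\sup_{|\omega|\le 2m}|(n\lambda)^\ell\partial^\alpha\varphi|$, because the literal version with the sum inside the absolute value fails (for $\varphi(\sigma,\tau)=e^{\tau-\sigma}$ and $m=1$, the $\ell=0$ term vanishes since $\partial_\sigma\varphi+\partial_\tau\varphi\equiv0$ and the $\ell=1$ term vanishes at $n=0$, yet the left-hand side there is $|\partial_\lambda e^{-i\lambda-d\lambda}|\neq0$ for $\lambda>0$), and only the sum-of-absolute-values form is used later in the paper.
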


We end the section by using Lemma \ref{nyckelhs} to show a crucial property of sub-Laplacian multipliers. A weaker variant of this result follows from the theorem of A.~Hulanicki \cite{Hulanicki}; if $φ$ is Schwartz on $[0, ∞)$, then $φ(-∆)$ maps $\Schwartz(ℍ^d)$ into $\Schwartz(ℍ^d)$.
\begin{theorem}\label{maottlig multiplikator-s}
  Let $φ ∈ C^∞([0,∞))$ have derivatives of at most polynomial growth at all orders. Then $φ(-∆)$ maps $\Schwartz(ℍ^d)$ into $\Schwartz(ℍ^d)$.
\end{theorem}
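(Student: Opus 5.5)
Take $f ∈ \Schwartz(ℍ^d)$ and $g = φ(-∆)f$. Since Schwartz functions on $ℍ^d$ are those of $ℝ^{2d+1}$ characterised via left-invariant vector fields, we need, for all multi-indices $I$ and all $N$, that $\||\argd|^{2N}D^Ig\|_{L^2(ℍ^d)} < ∞$; Sobolev embedding on $ℍ^d$ (or Lemma \ref{kaerna aer L2} together with the convolution inequality $\|h\star ω\|_{L^∞}≤\|h\|_{L^2}\|ω\|_{L^2}$ applied to a Bessel potential) then turns these weighted $L^2$ bounds into the usual Schwartz seminorms. By Lemma \ref{stora polynomderivatshs} with $p = |v|^{4N}$-type homogeneous polynomials, it suffices to bound $\|(i\ud s - \sfrac14|\ud z|^2)^k(-∆)^jg\|_{L^2(ℍ^d)}$ for all $j,k$; the inner-product terms there are handled by Cauchy–Schwarz into terms of the same form. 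Since $(-∆)^jg = (σ^jφ)(-∆)f$ and $σ^jφ$ again has derivatives of polynomial growth, we reduce to showing $\|(i\ud s - \sfrac14|\ud z|^2)^kψ(-∆)f\|_{L^2}<∞$ for every such $ψ$ and every $k$.

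Next, write $ψ(-∆)f = f\star ψ(-∆)δ$. Split $ψ = ψχ + ψ(1-χ)$ with $χ$ a low-frequency cutoff. For the key step I would factor $ψ(1-χ)(σ) = [ψ(1-χ)(σ)(1+σ)^{-M}]\,(1+σ)^{M}$ and absorb $(1-∆)^Mf$, which is again Schwartz, so that the remaining kernel $K = ψ_M(-∆)δ$, $ψ_M$ having all derivatives decaying like $(1+σ)^{-M+c_\ell}$, is in $L^2$ by Lemma \ref{kaerna aer L2}. Then I use that the weight $w(v)= is-\frac14|z|^2$ satisfies a quasi-binomial expansion under the group law: $w(v\go u^{-1})$ is a polynomial in the coordinates of $v$ and $u$, bounded by $C(1+|v|)^2(1+|u|)^2$, so $|w(v)|^k|f\star K(v)| ≲ ∫ (1+|v\go u^{-1}|)^{2k}|f(v\go u^{-1})|\,(1+|u|)^{2k}|K(u)|\de u$. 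Hence it suffices to show $(1+|\argd|)^{2k}K ∈ L^2$, since Young's inequality with the rapidly decaying Schwartz factor gives the $L^2$ bound.

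The main obstacle is showing $|\ud v|^{2k}K ∈ L^2$, i.e.\ $\|w^kK\|_{L^2}<∞$, since $|w|\simeq|v|^2$. Here $K$ is biradial, so by the biradial Plancherel theorem and Lemma \ref{nyckelhs},
\[
\|w^kK\|_{L^2}^2 ≲ ∫_ℝ|λ|^d∑_n\combin{d-1+n}n ∑_{\ell≤k}\sup_{|ω|≤2k}\big|(nλ)^\ell∂^{k+\ell}ψ_M(|λ|(d+2n+ω))\big|^2\de λ .
\]
With $σ≈|λ|(d+2n)$, $(n|λ|)^\ell ≲ σ^\ell$ and $∂^{k+\ell}ψ_M$ decays like $(1+σ)^{-M+C}$, so choosing $M$ large makes the summand $≲(1+σ)^{-2a}$ with $a>Q/4$, and the computation in Lemma \ref{kaerna aer L2} closes it; the shift $ω$ is harmless for $n$ large and for small $n$ since $ψ_M$ is bounded with bounded derivatives. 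The low-frequency part $ψχ$ is compactly supported and smooth, so the same estimate applies directly with $M=0$. A subtle point is the behaviour near $σ=0$ and negative $λ$ (using the second recursion in Lemma \ref{nyckelhs}), which only involves bounded derivatives and so is fine; I expect the bookkeeping that every Schwartz seminorm is indeed controlled by these weighted $L^2$ quantities to be the most delicate routine step.
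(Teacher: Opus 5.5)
Your core estimate is the same as the paper's. You absorb $(1-\Delta)^M$ into $f$ so that the kernel lies in $L^2$ (Lemma~\ref{kaerna aer L2}), distribute the weight over $v=(v\go u^{-1})\go u$, and bound $\|(is-\frac14|z|^2)^kK\|_{L^2}$ with the biradial Plancherel theorem and Lemma~\ref{nyckelhs}. Using Young's inequality $L^1\times L^2\to L^2$ plus Sobolev embedding, instead of the paper's $\|h\star K\|_{L^\infty}\le\|h\|_{L^2}\|K\|_{L^2}$, is only a repackaging. The gap is in how you treat derivatives. You apply Lemma~\ref{stora polynomderivatshs} to $g=\varphi(-\Delta)f$, but that lemma, like Lemmas~\ref{derivatuskshs} and~\ref{polynomderivatshs} on which it rests, is proved only for Schwartz functions. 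Its proof expands $\|D^I(pf)\|^2$, subtracts cross terms and integrates by parts, which presupposes that all the weighted norms involved are finite. Whether $g$ is Schwartz is exactly what you are proving, so the step is circular. You also cannot move $D^I$ past $\varphi(-\Delta)$ onto $f$, because left-invariant fields do not commute with $\Delta$ (e.g.\ $\Delta X_j=X_j\Delta+2Y_jS$).

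The missing idea is the one the paper uses. Right-invariant fields act by left convolution while $\varphi(-\Delta)$ acts by right convolution, so $\tilde D^I\varphi(-\Delta)f=\varphi(-\Delta)\tilde D^If$ with $\tilde D^If\in\Schwartz$. Each $D_i$, and each Euclidean partial derivative, is a combination of right-invariant fields with polynomial coefficients. Hence your derivative-free weighted bound, applied to $\tilde D^If$ in place of $f$, already controls every Schwartz seminorm, and Lemma~\ref{stora polynomderivatshs} is not needed. Otherwise you would have to justify its use by approximation: truncate $\varphi$, invoke Hulanicki's theorem, and pass to the limit with Fatou's lemma.

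Separately, your remark that small $n$ is harmless because $\psi_M$ is bounded with bounded derivatives does not suffice. For fixed $n$ you integrate against $|\lambda|^d\,d\lambda$, so you need decay in $\lambda$. When $d+2n\le 2k$, the set $\{|\lambda|(d+2n+\omega):|\omega|\le 2k\}$ contains $[0,|\lambda|]$, so the supremum in Lemma~\ref{nyckelhs} does not decay as $|\lambda|\to\infty$. For those finitely many $n$ and $|\lambda|\ge 1$, iterate instead the exact recursions from the proof of that lemma. These involve only derivatives of $\psi_M$ at the points $|\lambda|(d+2n')$ with $n'\ge 0$, $|n'-n|\le k$, with coefficients polynomial in $n$ and $1/\lambda$. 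The paper's proof glosses over this point as well.
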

\begin{proof}
  Let $f$ be Schwartz. We shall show that for any polynomial $p$ over $ℍ^d$ and multi-index $I$, $\sup_v |p(v)\tilde D^Iφ(-∆)f(v)| < ∞$. Since any left invariant vector field can be expressed as a sum $D_i = ∑_jp_j\tilde D^{I_j}$, see \cite{FollandS}, this requirement is equivalent to the one with left-invariant vector fields.

  Let then $M$ be an integer. Lemma \ref{kaerna aer L2} ensures that $φ(-∆)(1-∆)^{-M}δ$ is an $L^2$ function for sufficiently large $M$. We shall use that for any $v,w ∈ ℍ^d$, 
  \[
    p(v) = ∑_j q_j(v \go w^{-1})π_j(w)
  \]
  for some polynomials $q_j$, $π_j$, which follows from expanding $p((v \go w^{-1}) \go w)$. Therefore, by the Cauchy–Schwarz inequality and for some natural numbers $m_j$,
  \begin{align*}
    \big|p(v)\tilde D^Iφ(-∆)f(v)\big| &= \big|p(v)\,(1-∆)^M\tilde D^If \star φ(-∆)(1 - ∆)^{-M}δ(v)\big|\\
                                      &= \Big|∑_j q_j\,(1-∆)^M\tilde D^If \star π_j\,φ(-∆)(1 - ∆)^{-M}δ(v)\Big|\\
                                      \hspace{1.2em}&\hspace{-1.2em}≤ ∑_j \Hnorm{q_j\,(1-∆)^M\tilde D^If} \Hnorm{π_j\,φ(-∆)(1 - ∆)^{-M}δ}\\
                                      &\lesssim ∑_j \Hnorm{(1 + π_j^2)\,φ(-∆)(1 - ∆)^{-M}δ}\\
                                      &\lesssim ∑_j\Hnorm{|\argd|^{2m_j}\,φ(-∆)(1 - ∆)^{-M}δ}\\
                                      &\lesssim ∑_j\Hnorm{(i\ud s - \sfrac14|\ud z|^2)^{m_j}\,φ(-∆)(1 - ∆)^{-M}δ},
  \end{align*}
  where the range of $j$ is changed in the penultimate estimate.
  The \hyperlink{biradial}{biradial Plancherel theorem} then shows that the square of term $j$ in the last row above equals
  \[
    \frac1{(2π)^{d+1}}∫_ℝ|λ|^d∑_{n=0}^∞\combin{d-1+n}n \big|\G[(i\ud s - \sfrac14|\ud z|^2)^{m_j}\, φ(-∆)(1 - ∆)^{-M}δ](n,λ)\big|^2\de λ,
  \]
  and with the help of Lemma \ref{nyckelhs}, we can estimate this from above with
  \[
    ∫_0^∞λ^d ∑_{n=0}^∞ n^{d-1}(1 + λ(1 + n))^{2k - 2M}\de λ,
  \]
  for some integer $k = k(m_i,ℓ,φ)$. This is finite for sufficently large $M$.
\end{proof}

\section{Hardy spaces}\label{Hardyrumsavdelning}
We shall follow the atomic characterisation of Hardy spaces given by Folland and Stein \cite{FollandS}. For $p ∈ (1,∞)$, we define the Hardy space of power $p$, denoted $H^p(ℍ^d)$, to be $L^p(ℍ^d)$. For $p ∈ (0, 1]$ we let $\mathcal N = \lfloor Q(1/p - 1)\rfloor$.
Define a $p$-atom on $ℍ^d$ to be a compactly supported function $a ∈ L^∞(ℍ^d)$ such that
\begin{enumerate}
\item There is a closed ball $B ⊂ ℍ^d$ with $B \supset \supp a$ and $\|a\|_{L^∞(ℍ^d)} ≤ |B|^{ - 1/p}$.
\item For any polynomial $q$ of homogeneous degree no larger than $\mathcal N$, we have \rule{0pt}{1.2em}$∫_{ℍ^d}a(v)q(v)\de v = 0$.
\end{enumerate}
Then $H^p(ℍ^d)$ consists of the tempered distributions of the form $∑_{i=0}^∞ c_ia_i$, converging in $\Schwartz'(ℍ^d)$, with each $a_i$ a $p$-atom and where the $c_i ∈ ℂ$ satisfy $∑_{i=0}^∞|c_i|^p < ∞$. In other words, this definition of Hardy spaces mimics that on Euclidean space.  We define the $H^p$ quasinorm by $\|f\|_{H^p(ℍ^d)} = \inf\,(∑_{i=0}^∞|c_i|^p)^{1/p}$ where the infimum is over all atomic decompositions $f = ∑_{i=0}^∞c_ia_i$.

Just as over Euclidean space, the Hardy spaces over $ℍ^d$ can be expressed with several equivalent norms. Two more that shall be of interest to us are constructed using the heat flow $e^{t∆}$.

We shall have use of the Lusin norm
\[
  \|f\|_{H^p_\mathrm{L}(ℍ^d)} = \Bigg\|\Bigg(∫_0^∞∫_{|w^{-1}\,\go\, \ud v| < t}|(-t∆)e^{t∆}f(w)|^2\,t^{-Q-1}\de w\de t\Bigg)^{1/2}\Bigg\|_{L^p(ℍ^d)},
\]
as well as the Littlewood--Paley norms, for positive integers $N$,
\begin{equation}\label{LP-norm}
  \begin{aligned}
  \|f\|_{H^p_N(ℍ^d)} &= \Bigg\|\Bigg(∫_ℝ|(-t∆)^Ne^{t∆}f|^2\,\frac1t\de t\Bigg)^{1/2}\Bigg\|_{L^p(ℍ^d)}\\
                     &= \Bigg\|\Bigg(∫_ℝ|ψ(-t∆)f|^2\,\frac1t\de t\Bigg)^{1/2}\Bigg\|_{L^p(ℍ^d)},
\end{aligned}
\end{equation}
where we have defined
\[
  ψ : σ \mapsto σ^Ne^{-σ}.
\]
As elaborated in Chapter {\capfigures7} of \cite{FollandS}, each of these norms is equivalent with the atomic norm of $H^p(ℍ^d)$ described above, and we shall in the sequel write $H^p_N(ℍ^d) = H^p(ℍ^d)$. It will be paramount that we may take this $N$ to be as large as need be.

The dual of $H^1(ℍ^d)$ is the space $\BMO(ℍ^d)$ of functions of bounded mean oscillation, modulo constant functions. BMO consists of the locally integrable functions $f$ on $ℍ^d$ such that the seminorm
\[
  \|f\|_{\BMO(ℍ^d)} = \sup_B\frac1B∫_B\bigg|\,f(v) - \frac1B∫_Bf(w)\de w\bigg|\de v < ∞,
\]
where the supremum is over all balls $B$ in $ℍ^d$.

We end this section with a lemma about how to effectively commute left-invariant vector fields $D^I$ with the frequency concentrating operator of the Littlewood--Paley norm above.  When making use of this lemma, we shall always ignore the second sum with $Φ'$.

\begin{lemma}\label{utbytarhs}
  For any homogeneous polynomial $p$ of degree $m$ and for any multi-index $I$, there are homogeneous polynomials $q_n$ of degree $n = 0, \ldots, m$ and homogeneous polynomials $P, P'$ on $[0,∞) × ℂ$ of degree $N$ such that for all Schwartz $f$ and $r > 0$, we have, with $N$ and $ψ$ as in equation \pref{LP-norm},
  \begin{align*}
    \hspace{3em}&\hspace{-3em}\big\|p ψ(r^{-2}(-∆))D^If\big\|_{L^2(ℍ^d)} ≤\\
    &\ANDRA C_{m,I}∑_{|J| = |I|} ∑_{n=0}^{m}r^{n-m}\big\|q_nD^JΦ(r^{-2}(-∆), r^{-2}S)f\big\|_{L^2(ℍ^d)}\\
    &+C_{m,I}∑_{|J| = |I|} ∑_{n=0}^{m}r^{n-m}\big\|q_nD^JΦ'(r^{-2}(-∆), r^{-2}S)f\big\|_{L^2(ℍ^d)},
  \end{align*}
  where
  \[
    Φ(a,b) = (1 + a^{2N})^{-1}P(a,b),\quad Φ'(a,b) = (1 + a^{2N})^{-1}P'(a,b).
  \]
\end{lemma}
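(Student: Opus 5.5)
The plan is to move all of $D^I$ to the left of $ψ(r^{-2}(-∆))$, at the cost of commutators that involve only $S$. After that, the weight $p$ is pushed past $D^J$ by a Leibniz expansion, which produces the lower degree polynomials $q_n$. The key identities are the commutation relations $[X_i,Y_j] = -δ_{ij}S$, together with the fact that $S$ commutes with everything. These give $∆E_j = E_j∆ + 2(\pm)E_{j'}S$ (for instance $∆X_j = X_j∆ + 2Y_jS$, as used in the proof of Lemma \ref{derivatuskshs}). Consequently, for a polynomial $P$ in $-∆$ we have $P(-∆)E_j = ∑ E_{k}\,P_k(-∆,S)$, where the $P_k$ are polynomials of the same homogeneous degree. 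The degree counts $-∆$ and $S$ as degree one each after the scaling $r^{-2}$.

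\textbf{Step 1 (factoring $ψ$).} Write $ψ(a) = a^Ne^{-a}$ and split it as $ψ(a) = a^N\,(1 + a^{2N})^{-1}\cdot(1 + a^{2N})e^{-a}$. The factor $(1 + a^{2N})e^{-a}$ is bounded. It is a function of $-∆$ alone, so Lemma \ref{L2-multiplikator} lets us peel it off in $L^2$ once it stands to the left of everything except the weight $p$. Since a multiplication operator does not commute with it, I instead keep it inside, defining $Φ$ to absorb it. More precisely, I would first compute $ψ(r^{-2}(-∆))D^I$ formally. Using the commutation identity above repeatedly on the polynomial part $(r^{-2}(-∆))^N$ gives
\[
(r^{-2}(-∆))^N D^I = ∑_{|J| = |I|} D^J P_J(r^{-2}(-∆), r^{-2}S).
\]
Here each $P_J$ is homogeneous of degree $N$ in its two variables. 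Only the $E$-letters get permuted, and each commutator trades one $E$ for another $E$ times $S$, so the length $|J| = |I|$ is preserved. The remaining factor $e^{-r^{-2}(-∆)}$ does not commute with $D^I$ through a polynomial identity. For it I would instead use that $e^{t∆}$ commutes with $D^I$ modulo operators $φ(-∆,S)$ defined through the extended multiplier calculus of Corollary \ref{utoekad nyckelhs}. On the Fourier side, $D^I$ acts on $\F f(λ)$ by right multiplication with polynomials in creation/annihilation operators in the Hermite basis, and these shift $|m|$ by $\pm1$. Thus $e^{-r^{-2}H_λ}$ composed with a shift equals the shift composed with $e^{-r^{-2}(H_λ \pm 2|λ|)}$. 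This is a function of $(-∆, S)$, and I expect this to be the mechanism behind a second, $Φ'$-type term.

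\textbf{Step 2 (inserting the bounded factor).} Writing $e^{-a} = (1 + a^{2N})^{-1}\cdot g(a)$ with $g$ bounded, I would arrange the operator as $D^J\,Φ(r^{-2}(-∆), r^{-2}S)\,g(\cdot)$ plus shifted versions. The shifted pieces produce $Φ'$. I expect the bounded multiplier $g$ either to be absorbed into $Φ$, or to be handled by noting that the desired inequality is needed only up to such bounded factors.

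\textbf{Step 3 (moving $p$).} Finally, I would write $pD^J = ∑ D^{J}\circ(\cdot)$ in the other direction, or more simply expand $p\,D^J F = D^J(pF) - ∑ (D^{J'}p)\,D^{J''}F$. A cleaner route is to commute the multiplier $p$ through $Φ(r^{-2}(-∆),r^{-2}S)$. By the dilation identity \pref{multiplikatorutdragning2}, conjugating by $δ_r$ reduces everything to $r = 1$. At $r = 1$, commuting $p$ past $D^J$ produces terms $q_nD^{J}$ with $\deg q_n = n < m$, exactly as in the proof of Lemma \ref{polynomderivatshs}. Rescaling then gives the factors $r^{n-m}$, because a homogeneous polynomial of degree $n$ picks up $r^{-n}$ under $δ_{1/r}$ while $p$ picks up $r^{-m}$.

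The main obstacle is Step 1: making rigorous that the exponential factor commutes with $D^I$ up to functions of $(-∆,S)$ of the stated rational form with the same $|J|$. This has to be verified on the Fourier side through the Hermite ladder action of $X_j,Y_j$.
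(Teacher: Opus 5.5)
Your polynomial commutation identity in Step 1 is correct; it is exactly the paper's $D^KA'_K$. The proposal nevertheless has a genuine gap, and it lies in Steps 2 and 3.

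The right-hand side of the lemma contains only the rational multipliers $\Phi=P/(1+a^{2N})$ and $\Phi'=P'/(1+a^{2N})$. So the exponential part of $\psi$ must disappear completely, and it must do so while the weight $p$ is still present. Neither option in Step 2 achieves this:
\begin{itemize}
\item The factor $(1+a^{2N})e^{-a}$, or the shifted exponentials your ladder argument produces, cannot be ``absorbed into $\Phi$'', because $\Phi$ must have the stated rational form.
\item A bounded multiplier cannot simply be dropped. $\|p\,g(r^{-2}(-\Delta))F\|_{L^2}$ is not controlled by $\|pF\|_{L^2}$, since $p$ does not commute with $g(r^{-2}(-\Delta))$. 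This is exactly the problem you noticed and then set aside.
\end{itemize}

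Step 3 does not repair it. A Leibniz expansion of $p$ past $D^J$ produces, as in Lemma \ref{polynomderivatshs}, terms with \emph{fewer} derivatives. Every term in the lemma, by contrast, keeps a full $D^J$ with $|J|=|I|$. Commuting $p$ past $\Phi$ is not a polynomial identity either. In the lemma the lower-degree $q_n$ carry the factor $r^{n-m}$. That factor is the signature of splitting the weight across a convolution kernel at spatial scale $r^{-1}$, not of a Leibniz rule. Finally, the ladder-operator detour in Step 1 only moves the exponential to a place from which you would have to bring it back to the left in order to get rid of it.

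The missing idea is that the heat factor already stands to the left of $D^I$, so only the polynomial part ever needs commuting. The paper writes
\[
  \psi(r^{-2}(-\Delta))D^I = e^{r^{-2}\Delta}(-r^{-2}\Delta)^N D^I\bigl(1+(-r^{-2}\Delta)^{2N}\bigr)\bigl(1+(-r^{-2}\Delta)^{2N}\bigr)^{-1}.
\]
\begin{itemize}
\item In the part coming from the $1$, it moves $D^I$ left through $(-r^{-2}\Delta)^N$, giving $D^KA'_K$.
\item In the part coming from $(-r^{-2}\Delta)^{2N}$, it moves $D^I$ right through $N$ of the $2N$ powers, giving $B_KD^KB'$ with $B'=(-r^{-2}\Delta)^N$. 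This, not a Hermite shift, is where $\Phi'$ comes from.
\end{itemize}
The leftover factors $e^{r^{-2}\Delta}$ and $e^{r^{-2}\Delta}B_K$ are right convolutions with kernels $h=e^{r^{-2}\Delta}\delta$ and $h=e^{r^{-2}\Delta}B_K\delta$.

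Writing $p(v)=\sum_n q_n(v\go w^{-1})\pi_{m-n}(w)$ gives $p\,(F\star h)=\sum_n (q_nF)\star(\pi_{m-n}h)$. Young's inequality then bounds its $L^2$ norm by $\sum_n\|q_nF\|_{L^2}\|\pi_{m-n}h\|_{L^1}$. By dilation, $\|\pi_{m-n}h\|_{L^1}$ equals $r^{n-m}$ times an $r$-independent quantity. That quantity is finite by a weighted Cauchy--Schwarz estimate, the biradial Plancherel theorem and Corollary \ref{utoekad nyckelhs}. This convolution and weight-splitting step is what your argument needs in place of Steps 2 and 3.
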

\begin{proof}
  If we define the $2d×2d$ matrix $J$ to act on the $2d$-vector $E = (X,Y)$ by
  \[
    JE = (Y,-X),
  \]
  we have from the \hyperlink{commutation relation} {commutation relation of the left-invariant vector fields} (page \pageref{commutation relation}) that
  \(
    ∆E = E∆ + 2JES,
  \)
  and an induction argument shows that
  \[
    ∆^nE = ∑_{k=0}^n\combin nk(2J)^kE\,∆^{n-k}S^k, \qquad E∆^n = ∑_{k=0}^n\combin nk∆^{n-k}S^k(-2J)^kE.
  \]
  Therefore, for any multi-indices $I,K$ and positive integers $j$, there are constants $C_{K,j}$, $C_{K,j}'$ such that
  \begin{equation}\label{kommutera Laplace och D}
    \begin{aligned}
      (-∆)^nD^I &= ∑_{|K| = |I|}∑_{j=0}^nC_{K,j}D^K(-∆)^{n-j}S^j\\
      D^I(-∆)^n &= ∑_{|K| = |I|}∑_{j=0}^nC_{K,j}'(-∆)^{n-j}S^jD^K.
    \end{aligned}
  \end{equation}
  Using these identities, we find that 
  \begin{align*}
    &ψ(r^{-2}(-∆))D^I =\\[.4em]
    &\ANDRA e^{r^{-2}∆}(-r^{-2}∆)^N D^I\big(1 + (-r^{-2}∆)^{2N}\big)\big(1 + (-r^{-2}∆)^{2N}\big)^{-1} \\
                     &= e^{r^{-2}∆}\Bigg(∑_{|K| = |I|}∑_{j=0}^NC_{K,j}D^Kr^{-2N}(-∆)^{N-j}S^j \\
                     &\quad + \!∑_{|K| = |I|}∑_{j=0}^NC_{K,j}'r^{-4N}(-∆)^{2N-j}S^jD^K(-r^{-2}∆)^N\Bigg)\big(1 + (-r^{-2}∆)^{2N}\big)^{-1}\\
                     &=: e^{r^{-2}∆}∑_{|K| = |I|}(D^KA'_K + B_KD^KB')(1 + (-r^{-2}∆)^{2N})^{-1},
  \end{align*}
  where we have defined
  \begin{equation*}\label{A'BB'}
    \begin{aligned}
      A'_K &= ∑_{j=0}^NC_{K,j}r^{-2N}(-∆)^{N-j}S^j\\
      B_K &= ∑_{j=0}^NC_{K,j}'r^{-4N}(-∆)^{2N-j}S^j,\qquad  B' = (-r^{-2}∆)^N.
    \end{aligned}
  \end{equation*}
  Now, we can assume that $p$ is monomial, and there are then, just as in the proof of Theorem \ref{maottlig multiplikator-s}, homogeneous polynomials $q_n,π_{m-n}$ of degrees $n$ and $m-n$, respectively, such that, for any $v,w ∈ ℍ^d$, $p(v) =∑_{n=0}^{m} q_n(v \go w^{-1})π_{m-n}(w)$. Therefore, using Young's inequality,
  \begin{align*}
    &\big\|pψ(r^{-2}(-∆))D^If\big\|_{L^2(ℍ^d)} ≤\\[.4em]
    & ∑_{|K| = |I|}\big\|pe^{r^{-2}∆}(D^KA'_K + B_KD^KB')(1 + (-r^{-2}∆)^{2N})^{-1}f\big\|_{L^2(ℍ^d)}\\
    &=∑_{|K| = |I|}\Big\|p\big[D^KA'_K(1 + (-r^{-2}∆)^{2N})^{-1}f \star e^{r^{-2}∆}δ \\
    &\hspace{5em} + D^KB'(1 - (-r^{-2}∆)^{2N})^{-1}f \star e^{r^{-2}∆}B_Kδ\big] \Big\|_{L^2(ℍ^d)}\\
    &=∑_{|K| = |I|}∑_{n=0}^m\big\|q_nD^KA'_K(1 + (-r^{-2}∆)^{2N})^{-1}f \star π_{m-n}e^{r^{-2}∆}δ \\
    &\hspace{5em} + q_nD^KB'(1 - (-r^{-2}∆)^{2N})^{-1}f \star π_{m-n}e^{r^{-2}∆}B_Kδ \big\|_{L^2(ℍ^d)}\\
    &≤ ∑_{|K| = |I|}∑_{n=0}^{m}\bigg(\big\|r^{n-m}q_nD^KA'_K(1 + (-r^{-2}∆)^{2N})^{-1}f\big\|_{L^2(ℍ^d)}\\[-.2em]
    &\hspace{18em}×\ \big\|r^{m-n} π_{m-n}e^{r^{-2}∆}δ\big\|_{L^1(ℍ^d)} \\
    &+ \big\|r^{n-m}q_nD^KB'(1 + (-r^{-2}∆)^{2N})^{-1}f\big\|_{L^2(ℍ^d)}\big\|r^{m-n}π_{m-n}e^{r^{-2}∆}B_Kδ \big\|_{L^1(ℍ^d)}\bigg).
  \end{align*}
  We can then use homogeneity with identity \pref{multiplikatorutdragning2}, a large integer $M$, Hölder's inequality, the \hyperlink{biradial}{biradial Plancherel's theorem} and Corollary \ref{utoekad nyckelhs} to deduce that
  \begin{align*}
    &\big\|r^{m-n} π_{m-n}e^{r^{-2}∆}B_Kδ\big\|_{L^1(ℍ^d)} = \Bigg\|∑_{j=0}^NC'_{K,j}π_{m-n}\,e^∆(-∆)^{N-j}S^jδ\Bigg\|_{L^1(ℍ^d)}\\
    &\lesssim ∑_{j=0}^N\big\|(1 + |·|^2)^{-M}\big\|_{L^2(ℍ^d)}\big\|(1 + |·|^2)^Mq_{m-n}\,e^∆(-∆)^{N-j}S^jδ\big\|_{L^2(ℍ^d)}\\
          &\lesssim ∑_{j=0}^N \big\|(1 + |·|^2)^M(1 + |·|^{2(m-n)})\,e^∆(-∆)^{N-j}S^jδ\big\|_{L^2(ℍ^d)} \lesssim 1,
  \end{align*}
  and similarly for the $L^1$-norm in the first of the two terms above.
\end{proof}

\section[Proof of Theorem 1.3]{Proof of Theorem \ref{allmaenna s}}\label{allmaenna ss avdelning}
Let us abbreviate the operator of interest $T = φ_t(-∆)$. We shall also denote
\[
  μ =  -γQ(1/p - 1/2) - |I|/2 - |I'|/2,
\]
and write $\jap t = 1 + t$, so that the requirement on $φ_t$ reads that for some $γ ≥ 0$,
\[
  ∂^mφ_t(σ) \lesssim [\jap tσ^{γ-1}]^m(1 + σ)^μ.
\]
We shall start by showing the theorem for $p ∈ (0,1)$ in the special case when $I'$ is empty, and then show this  for $p = 1$. Afterwards we make the extension to general $I'$, and we can deduce the $p = ∞$ case from duality. Finally, we show how the cases with $p ∈ (1, ∞)$ follow from interpolation.

\subsection{Boundedness on quasi-Banach spaces}
The proof of boundedness on $H^p(ℍ^d)$ with $p < 1$ and $|I'| = 0$ consists of three steps.

\subsubsection*{Step 1: Atomic analysis and frequency decomposition}
Using the atomic decomposition of Hardy spaces, it will for $p ≤ 1$ suffice to show that
\begin{equation}\label{verkan på atom}
   \|Ta\|_{H^p(ℍ^d)} \lesssim \jap t^{Q(1/p - 1/2)}\|a\|_{H^p(ℍ^d)} = \jap t^{Q(1/p - 1/2)} =: t_p
\end{equation}
for a fixed arbitrary atom $a$ supported in some ball $B$, in order to conclude that $\|Tf\|_{L^p(ℍ^d)} \lesssim \|f\|_{H^p(ℍ^d)}$ holds for arbitrary elements of $H^p(ℍ^d)$. Since the norms are translation invariant, we can assume that $B$ is centred at the origin. 

We shall use one of the Littlewood--Paley norms from the previous section, with a large positive integer $N$. It will turn out to be natural to use the particular one-parameter set of operators $ψ(r^{-2}(-∆)) = (-r^{-2}∆)^Ne^{r^{-2}∆}$. These operators hence concentrate frequency around $r$. If we further abbreviate the operators
\[
  T_r = ψ(r^{-2}(-∆))D^IT,
\]
we can write
\begin{align*}
  \hspace{1em}&\hspace{-1em}\|Ta\|_{H^p(ℍ^d)} = \\
  &\Bigg\|\Bigg(∫_ℝ|ψ(r^{-2}(-∆))D^ITa|^2\db r\Bigg)^{1/2}\Bigg\|_{L^p(ℍ^d)} = \Bigg\|\Bigg(∫_ℝ|T_ra|^2\db r\Bigg)^{1/2}\Bigg\|_{L^p(ℍ^d)}.
\end{align*}
Before we start decomposing this expression, we notice that the $D^I$ operator is in between the sub-Laplacian multipliers $ψ(r^{-2}(-∆))$ and $T$, which is not the form we want if we subsequently want to apply Lemma \ref{stora polynomderivatshs}. To come around this issue is the purpose of Lemma \ref{utbytarhs}, which shall come into play shortly.

Now, we start by subdividing the norm into two regions; one inner, localised around $B$, and the other being the outside of this region. Write $2^ℓ$ for the radius of $B$, and let $B^*$ be the ball cocentric with $B$ of radius $2·2^ℓ\jap t$. We then decompose
\[
  \|Ta\|_{H^p(ℍ^d)} ≤ \|Ta\|_{H^p(B^*)} + \|Ta\|_{H^p(B^{*c})}.
\]
For the inner part, we use Hölder's inequality to find that
\begin{align}
  \|Ta\|_{H^p(B^*)} &= \Bigg\|\Bigg(∫_ℝ|T_ra|^2\db r\Bigg)^{1/2}\Bigg\|_{L^p(B^*)} \label{foersta T_ra-normen}\\
                      &≤ \|1\|_{L^{2p/(2 - p)}(B^*)}\Bigg\|\Bigg(∫_ℝ|T_ra|^2\db r\Bigg)^{1/2}\Bigg\|_{L^2(ℍ^d)}\notag \\
                    &\lesssim (2^ℓ\jap t)^{Q(1/p-1/2)}\Bigg(∫_ℝ\|T_ra\|_{L^2(ℍ^d)}^2\db r\Bigg)^{1/2}\notag
\end{align}
Lemma \ref{utbytarhs} with $m = 0$ shows that this is estimated from above by
\[
  t_p|B|^{1/p-1/2}∑_{|J|=|I|}\Bigg(∫_ℝ\|D^JΦ(r^{-2}(-∆),r^{-2}S)Ta\|_{L^2(ℍ^d)}^2\db r\Bigg)^{1/2},
\]
and Lemma \ref{derivatuskshs} in turn gives the upper estimate
\[
  t_p|B|^{1/p-1/2}\Bigg(∫_ℝ\|(-∆)^{|I|/2}Φ(r^{-2}(-∆),r^{-2}S)Ta\|_{L^2(ℍ^d)}^2\db r\Bigg)^{1/2}.
\]
We then define $Ψ ∈ \Smooth([0,∞))$ from $Ψ(σ) = σ^N/(1 + σ^{2N})$ and make use of Plancherel's theorem to see that this expression equals
\begin{align*}
                 &t_p|B|^{1/p-1/2}\Bigg(∫_ℝ|λ|^d∑_{m∈ℕ^d}∫_ℝ|Φ(r^{-2}|λ|(d + 2|m|), r^{-2}iλ)|^2\db r\notag\\
  &\TERM ×\ \big(|λ|(d + 2|m|)\big)^{|I|}|φ_t(|λ|(d + 2|m|)|^2 \|\F a(λ)η_m^λ\|_{L^2(ℝ^d)}^2\de λ\Bigg)^{1/2}\notag\\ 
                  &\lesssim t_p|B|^{1/p-1/2}\Bigg(∫_ℝ|λ|^d∑_{m∈ℕ^d}∫_ℝ|Ψ(r^{-2}|λ|(d + 2|m|))|^2\db r\notag\\
                 &\TERM ×\ \big(|λ|(d + 2|m|)\big)^{|I|}|φ_t(|λ|(d + 2|m|)|^2 \|\F a(λ)η_m^λ\|_{L^2(ℝ^d)}^2\de λ\Bigg)^{1/2}\notag\\
                 &\lesssim t_p|B|^{1/p-1/2}\Bigg(∫_ℝ|λ|^d∑_{m∈ℕ^d} \|\F a(λ)η_m^λ\|_{L^2(ℝ^d)}^2\de λ\Bigg)^{1/2}\\
                   &= t_p|B|^{1/p-1/2}\|a\|_{L^2(ℍ^2)}.
\end{align*}
Finally, the first atomic property shows that this quantity is bounded from above by $t_p$.

To continue with the part outside of $B^*$, we claim that estimate \pref{verkan på atom} is satisfied if the following requirement hold: There are positive constants $ε$, $ε'$ such that we have for each integer $k$ that
\begin{align}
  \label{krav1}
  \Bigg\|\Bigg(∫_{2^k}^{2^{k+1}}|T_ra|^2\db r\Bigg)^{1/2}\Bigg\|_{L^p(B^{*c})} &\lesssim 2^{-ε(k+ℓ)}t_p \qquad\text{when $k ≥ -\floor ℓ$}\\
    \label{krav2}
  \Bigg\|\Bigg(∫_{2^k}^{2^{k+1}}|T_ra|^2\db r\Bigg)^{1/2}\Bigg\|_{L^p(B^{*c})} &\lesssim 2^{ε'(k+ℓ)}t_p \qquad\text{when $k < -\floor ℓ$}.
\end{align}
In other words, we would require rapid decay for the high respectively low frequency portions of the operator, relative to the inverted size of $B$. These two demands suffice, since if they hold, 
\begin{align*}
  \|Ta\|_{H^p(B^{*c})}^p &≤ ∑_{k∈ℤ}\,\Bigg\|\Bigg(∫_{2^k}^{2^{k+1}}|T_ra|^2\db r\Bigg)^{1/2}\Bigg\|_{L^p(B^{*c})}^p\\
  &\lesssim ∑_{k=-∞}^{-\floor ℓ-1} 2^{-εp(k+ℓ)}t_p^p + ∑_{k=-\floor ℓ}^∞ 2^{ε'p(k+ℓ)}t_p^p \lesssim t_p^p.
\end{align*}

\subsubsection*{Step 2: Translation to kernel estimates}
  The second step in the proof is to show how estimates \pref{krav1} and \pref{krav2} follow from certain $L^2$ estimates on the kernels of $T_r$.

In the upcoming analysis, we will find that the position space multiplier $|·|^2$ acting on $T_r$ results in an asymptotic growth $\jap t^2 2^{-2ck}$, where $c$ is a real constant that is different in two regions of frequency-time. To separate these regions, we take a nonnegative $χ^\mathup L ∈ \Smooth_c([0,∞))$ that equals $1$ close to $0$, set $χ^\mathup H = 1 - χ^\mathup L$ and thus form the low and high frequency-time operators
\[
  T_r = T^\mathup L_r + T^\mathup H_r := T_rχ^\mathup L\big(\jap t(-∆)^γ\big) + T_rχ^\mathup H\big(\jap t(-∆)^γ\big).
\]
However, since the treatment of both series of operators will be the same for the initial part of the analysis, we shall suppress this difference for now, and return to it at the stage when we fix $c$.

 To show the high frequency decay requirement \pref{krav1}, we shall use the expected asymptotic growth mentioned above. For any non-negative integer $M$, we may multiply and divide by a polynomial tailored from this property and use Hölder's inequality, to find that
\begin{align*}
  \hspace{1em}&\hspace{-1em}\Bigg\|\Bigg(∫_{2^k}^{2^{k+1}}|T_ra|^2\db r\Bigg)^{1/2}\Bigg\|_{L^p(B^{*c})} ≤\\
  & \big\|(1 + \jap t^{-2}2^{2ck}|·|^2)^{-M}\big\|_{L^{2p/(2-p)}(B^{*c})}\\
  &×\ \Bigg(∫_{2^k}^{2^{k+1}}\big\|(1 + \jap t^{-2}2^{2ck}|·|^2)^M T_ra\big\|_{L^2(B^{*c})}^2\db r\Bigg)^{1/2}\\[.4em]
 &\lesssim (\jap t2^{-ck})^{Q(1/p-1/2)} \big\|(1 + \jap t^{-2}2^{2ck}|·|^2)^M T_{\ud r}a\big\|_{L^2(B^{*c}×I_k)},
\end{align*}
where we write $I_k$ for the interval $[2^k, 2^{k+1}]$ equipped with the measure $\dd r/r$. Since $M$ is arbitrary, we may equivalently study the expression
\begin{align}
  &(\jap t2^{-ck})^{Q(1/p-1/2)}(\jap t^{-2}2^{2ck})^M \big\||·|^{2M} T_{\ud r}a\big\|_{L^2(B^{*c}×I_k)}\label{foersta T^J}\\
  &\lesssim t_p2^{-ckQ(1/p-1/2)}(\jap t^{-2}2^{2ck})^M \notag\\
  &\hspace{3em}×\ ∑_{|J| = |I|} ∑_{m=0}^{2M}2^{(m-2M)k}\big\|p_mD^JΦ(\ud r^{-2}(-∆), \ud r^{-2}S)Ta\big\|_{L^2(B^{*c}×I_k)}\notag\\
  &=: ∑_{|J| = |I|} ∑_{m=0}^{2M}t_p\jap t^{-2M}2^{k[-cQ(1/p-1/2)+2(c-1)M + m]}  \big\|p_mT^J_{\ud r}a\big\|_{L^2(B^{*c}×I_k)},\notag
\end{align}
where we have used Lemma \ref{utbytarhs} and defined
\[
  T^J_r = D^JΦ(r^{-2}(-∆), r^{-2}S)T.
\]
Write then $K^J_r = δ \circ T^J_r \circ P$ for the right convolution kernel of $T^J_r$. The summand of the expression above then equals
\begin{equation}
  \hspace{-1em}t_p\jap t^{-2M}2^{k[-cQ(1/p-1/2) + (c-1)2M + m]}  \Bigg\|p_m(\ud v)∫_BK^J_{\ud r}(w^{-1}\go\ud v)a(w)\de w\Bigg\|_{L^2(B^{*c}×I_k)}\hspace{-3em}\label{problem foer p = 1}
\end{equation}
and we may apply Minkowski's inequality to deduce that it is bounded from above by
\begin{equation}\label{dags foer polynomtrix}
  t_p\jap t^{-2M}2^{k[-cQ(1/p-1/2) + (c-1)2M + m]}  ∫_B|a(w)|\Big\|p_m(\ud v)K^J_{\ud r}(w^{-1}\go\ud v)\Big\|_{L^2(B^{*c}×I_k)}\de w.
\end{equation}
Now, for $v ∈ B^{*c}$ and $w ∈ B$, we have for some homogeneous polynomials $q_n$ and $π_{m-n}$ of degrees $n$ and $m-n$ that
\begin{align*}
  p_m(v) &= p_m(w \go (w^{-1}\go v)) = ∑_{n=0}^mq_n(w)π_{m-n}(w^{-1}\go v) \\
         &\lesssim ∑_{n=0}^m |q_n(w^{-1}\go v)π_{m-n}(w^{-1}\go v)|.
\end{align*}
For an upper estimate of the expression above, we may therefore make the substitution $p_m(v) → ∑_n|q_n(w^{-1}\go v)π_{m-n}(w^{-1}\go v)|$, but since each product $q_nπ_{m-n}$ is a homogeneous polynomial of degree $m$, we shall for brevity suppress this sum and write again $p_m(w^{-1}\go v)$ for any of these terms. With this slight abuse of notation, expression \pref{dags foer polynomtrix} is estimated from above by
\begin{align*}
  &t_p\jap t^{-2M}2^{k[-cQ(1/p-1/2) + (c-1)2M + m]} \\
  &\hspace{3em}×\ ∫_B|a(w)|\Big\|p_m(w^{-1}\go \ud v)K^J_{\ud r}(w^{-1}\go\ud v)\Big\|_{L^2(B^{*c}×I_k)}\de w\\
    &≤ t_p\jap t^{-2M}2^{k[-cQ(1/p-1/2) + (c-1)2M + m]}  ∫_B|a(w)|\Big\|p_m(\ud v)K^J_{\ud r}(\ud v)\Big\|_{L^2(H^d×I_k)}\de w.
\end{align*}
Finally, the first atomic property shows that this expression is bounded from above by 
\[
  t_p\jap t^{-2M}2^{k[-cQ(1/p-1/2) + (c-1)2M + m] - ℓQ(1/p-1)}  \big\|p_m K^J_{\ud r}\big\|_{L^2(ℍ^d×I_k)}.
\]
This should be compared to the right-hand side of the requested estimate \pref{krav1}. For the $ℓ$-dependence to match, we see that we can take $ε = Q(1/p - 1)$, which is positive for $p < 1$. In order for all other dependencies to cancel out, we require that, for all nonnegative $m ≤ 2M$,
\begin{equation}\label{stegtillkrav1}
  \big\|p_mK^J_{\ud r}\big\|_{L^2(ℍ^d×I_k)} ≤ C_M \jap t^{2M}2^{k[Q/2 + (c-1)Q(1/p-1/2)  - c2M + 2M - m]}.
\end{equation}

The next task is to derive a similar estimate that provides for the low frequency decay requirement \pref{krav2}. We start by making use of the second atomic property in order to subtract the Taylor polynomials of Theorem \ref{Taylors s}:
\begin{align*}
  T_r^Ja(v) &=  ∫_{ℍ^d}K^J_r(w^{-1} \go v)a(w)\de w \\
  &= ∫_{ℍ^d} \Big(K^J_r(w^{-1} \go v) - ∑_{j=0}^{\mathcal N} \frac1{j!} \tilde M_{w^{-1}}^jK^J_r(v)\Big)\,a(w)\de w.
\end{align*}
It then follows from Theorem \ref{Taylors s} and its proof that
\begin{align*}
  &K^J_r(w^{-1}\go v) - ∑_{j=0}^{\mathcal N} \frac1{j!} \tilde M_{w^{-1}}^jK^J_r(v) = \frac1{\mathcal N!}∫_0^1(1-θ)^{\mathcal N}_{\phantom θ}∂_θ^{\mathcal N+1}K^J_r(δ_θ(w^{-1}) \go v)\de θ\\
  &= ∫_0^1∑_{\substack{a+b≤\mathcal N+1\\b≤a}}\frac{C_{a,b}}{\mathcal N!}(1-θ)^{\mathcal N}_{\phantom θ}θ^{a-b}(\mathring sS)^a(\mathring z·\tilde E)^{\mathcal N+1-a-b}K^J_r(δ_θ(w^{-1}) \go v)\de θ,
\end{align*}
where we have put $(\mathring z, \mathring s) = w^{-1}$. We can therefore replace $K^J_r(w^{-1}\go v)$ in the previous reasoning with the expression above, thus finding that formula \pref{problem foer p = 1} satisfies
\begin{align*}
  &t_p\jap t^{-2M}2^{k[-cQ(1/p-1/2)+2(c-1)M + m]}  \Bigg\|p_m(\ud v)∫_BK^J_{\ud r}(w^{-1}\go\ud v)a(w)\de w\Bigg\|_{L^2(B^{*c}×I_k)}\\
  &=t_p\jap t^{-2M}2^{k[-cQ(1/p-1/2)+2(c-1)M + m]}  \Bigg\|p_m(\ud v)∫_B∫_0^1∑_{\substack{a+b≤\mathcal N+1\\b≤a}}\frac{C_{a,b}}{\mathcal N!}(1-θ)^{\mathcal N}_{\phantom θ}θ^{a-b}\\
  &\quad ×\ (\mathring sS)^a(\mathring z·\tilde E)^{\mathcal N+1-a-b}K^J_r(δ_θ(w^{-1}) \go v)\de θ\,a(w)\de w\Bigg\|_{L^2(B^{*c}×I_k)}\\
  &\lesssim t_p\jap t^{-2M}2^{k[-cQ(1/p-1/2)+2(c-1)M + m]} ∫_B|a(w)| \Bigg\|p_m(\ud v)∫_0^1∑_{\substack{a+b≤\mathcal N+1\\b≤a}}\frac{C_{a,b}}{\mathcal N!}\\
  &\quad ×\ (1-θ)^{\mathcal N}_{\phantom θ}θ^{a-b}(\mathring sS)^a(\mathring z·\tilde E)^{\mathcal N+1-a-b}K^J_r(δ_θ(w^{-1}) \go \ud v)\de θ\Bigg\|_{L^2(B^{*c}×I_k)}\de w.
\end{align*}
To estimate this expression, we would now like to put it in a form to which we could apply Lemma \ref{stora polynomderivatshs}, which in particular requires left-invariant vector fields. This is addressed by the following lemma.
\begin{lemma}
  For all multi-indices $I$, radial $g ∈ \Smooth(ℍ^d)$ and biradial $f ∈ \Continuous^{|I|}(ℍ^d)$,
  \[
    \|g\tilde D^If\|_{L^p(U)} \lesssim \|g D^If\|_{L^p(U)}
  \]
  for all $p ∈ (0,∞]$, and all measurable $U ⊂ ℍ^d$ with $U^{-1} = U$.
\end{lemma}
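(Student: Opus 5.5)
The plan is to turn right-invariant derivatives of a biradial function into left-invariant ones by a symmetry of $ℍ^d$ that fixes $f$ and $g$. The natural candidate is the linear map $A(x,y,s) = (-x,y,s)$. It is the composition of the parity operator $v\mapsto v^{-1}$, which is an anti-automorphism, with the automorphism $(x,y,s)\mapsto(x,-y,-s)$. One checks directly from the group law that $A(v\go w) = A(w)\go A(v)$. Moreover $A$ commutes with the Euclidean scalings $v\mapsto tv$, preserves Lebesgue measure, and preserves $|z|$, $s$ and the homogeneous norm $|v|$. Hence $f\circ A = f$ for biradial $f$, and $g\circ A = g$ for radial $g$.

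The key identity holds for arbitrary $F ∈ \Continuous^1(ℍ^d)$ and a coordinate direction $e_i$:
\[
  \tilde D_iF(A(w))\Big|_{F\to F\circ A}
  = ∂_tF\big(A(te_i\go w)\big)\big|_{t=0}
  = ∂_tF\big(A(w)\go tA(e_i)\big)\big|_{t=0}
  = ±D_iF(A(w)),
\]
since $A(e_i) = ±e_i$. In other words, $\tilde D_i(F\circ A) = ±(D_iF)\circ A$. Applying this repeatedly from the innermost derivative outwards gives, for any multi-index $I$,
\[
  \tilde D^I f = \tilde D^I(f\circ A) = ±(D^If)\circ A,
\]
with the order of the vector fields preserved and only a global sign appearing. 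Because $g = g\circ A$, we get $|g\tilde D^If| = |gD^If|\circ A$ pointwise. A change of variables with the measure-preserving map $A$ then yields
\[
  \|g\tilde D^If\|_{L^p(U)} = \|gD^If\|_{L^p(A(U))}
\]
for every $p ∈ (0,∞]$; for $p=∞$ this is just a supremum taken over $A(U)$.

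It remains to compare $A(U)$ with $U$, and I expect this to be the main obstacle. We have $A(U) = J(U^{-1})$ with $J(x,y,s) = (x,-y,-s)$. The hypothesis $U^{-1}=U$ therefore only gives $A(U) = J(U)$, and the displayed identity closes the argument exactly when $U$ is also $J$-invariant. My first attempt would be to search for a second symmetry that fixes $f$ and maps $J(U)$ back to $U$. Both $f\circ J$ and $f\circ P$ equal $\tilde f(|z|,-s)$, so the natural candidates only reproduce the same identity, and this route looks circular.

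I would therefore check, at the point of application in the proof of Theorem~\ref{allmaenna s}, that the sets $U$ used there are functions of the homogeneous norm, such as $U = B^{*c}$ with $B$ centred at the origin. These sets are invariant under both $v\mapsto v^{-1}$ and $A$, so the lemma follows with equality there. For completely general symmetric $U$, I would need an additional argument at this step, or else the statement would need $A$-invariance of $U$ as an extra hypothesis.
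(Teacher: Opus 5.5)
\textbf{The step you flagged cannot be closed, because the lemma is false as stated.} Your computation is correct as far as it goes. $A(x,y,s)=(-x,y,s)$ is a linear, measure-preserving anti-automorphism with $f\circ A=f$ for biradial $f$ and $g\circ A=g$ for radial $g$. Moreover $\tilde D^If=\pm(D^If)\circ A$, so $\|g\tilde D^If\|_{L^p(U)}=\|gD^If\|_{L^p(A(U))}$. However, the inequality fails for general $U$ with $U^{-1}=U$.

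Let $v_0$ be the point with $x_1=y_1=s=1$ and all other coordinates $0$. Let $f=(|z|^2-4s)\chi$, with $\chi$ a smooth compactly supported biradial cut-off equal to $1$ near $\pm v_0$, and take $g=1$, $I=(1)$. Near $\pm v_0$,
\[
  X_1f=2x_1-2y_1,\qquad \tilde X_1f=2x_1+2y_1 .
\]
With Euclidean balls, $U_\varepsilon=B_\varepsilon(v_0)\cup B_\varepsilon(-v_0)$ satisfies $U_\varepsilon^{-1}=U_\varepsilon$, since $v^{-1}=-v$. On $U_\varepsilon$ we have $|X_1f|<4\varepsilon$ while $|\tilde X_1f|>4-4\varepsilon$. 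Hence
\[
  \|\tilde X_1f\|_{L^p(U_\varepsilon)}\ge\frac{1-\varepsilon}{\varepsilon}\|X_1f\|_{L^p(U_\varepsilon)}\quad\text{for every } p\in(0,\infty].
\]
Stacking such pairs of balls at heights $s=\pm k$, with radii tending to $0$ and $f$ localised accordingly, even gives a single symmetric $U$ for which no constant works.

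This is your identity in action. $A(U_\varepsilon)$ is centred at the points with $x_1=-y_1=\mp1$, $s=\pm1$, where $|X_1f|=4$. So some extra hypothesis on $U$ is unavoidable, and $A(U)=U$ is the natural one. For symmetric $U$ it is equivalent to invariance under $(x,y,s)\mapsto(x,-y,-s)$. Under it your argument proves the lemma with equality, without using $U^{-1}=U$ at all.

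One correction concerning the application: after the left translation by $\delta_\theta(w)$, the domain is a translate of $B^{*c}$, not $B^{*c}$ itself, and that translate is neither symmetric nor $A$-invariant. You should first enlarge it to the whole group, which is $A$-invariant; the final bound there is over the whole group anyway.

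\textbf{Comparison with the paper's proof.} The paper uses the parity $P$ instead of $A$. After pushing the factors $S$ onto $f$, it writes $\tilde E^I=(-1)^{|I|}PE^IP$ and uses $\overline{Pf}=\G^{-1}[\overline{\G f}]$. This gives $|\tilde E^If_{\mathrm R}|=|E^If_{\mathrm R}|\circ P$, and likewise for $f_{\mathrm I}$, where $\G f_{\mathrm R}=\Re\G f$ and $\G f_{\mathrm I}=i\,\Im\G f$. Since $P$ preserves symmetric $U$ and radial $g$, this proves the inequality for $f_{\mathrm R}$ and $f_{\mathrm I}$ separately; that is where the hypothesis $U^{-1}=U$ comes from.

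The recombination $\|gE^If_{\mathrm R}\|+\|gE^If_{\mathrm I}\|\lesssim\|gE^If\|$ is never addressed, and it fails in the example above. There, for $\chi$ even in $s$, $f_{\mathrm R}=|z|^2\chi$ and $f_{\mathrm I}=-4s\chi$ have $X_1$-derivatives $2x_1$ and $-2y_1$, which cancel at $\pm v_0$.

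The two routes thus have complementary defects. $P$ respects $U$, but sends $f$ to $\overline f$ only when $\G f$ is real, and to $-\overline f$ only when it is imaginary. $A$ fixes every biradial $f$ exactly, but does not respect $U$. Yours is the cleaner of the two: it needs no Laguerre representation, and it treats $S$ directly because $A(e_{2d+1})=e_{2d+1}$. It also gives an exact identity under a hypothesis that is easy to verify.
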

\begin{proof}
  Firstly, since the biinvariant $S$ commutes with all components of $D$ and $\tilde D$, we can push any $S$ in $\tilde D^I$ up against $f$, forming $S^nf$, which is again a biradial function. It therefore suffices to show the estimate with $D$ replaced with $E$.

  We can write $\tilde E_j = -PE_jP$ for all $j$, with $Pf(v) = f(v^{-1})$ as before. Therefore $\tilde E^I = (-1)^{|I|}PE^IP$. From representation \pref G, we then find that $\overline{Pf} = \G ^{-1}[\overline{\G f}]$, and hence if we write $f = f_\mathrm R + f_\mathrm I := \G ^{-1}[\Re \G f] + \G ^{-1}[i\;\!\Im \G f]$,
\[
  \tilde E^If_\mathrm R = (-1)^{|I|}  \mathrm{Cc}^{|I|}[PE^If_\mathrm R], \quad \tilde E^If_\mathrm I =   \mathrm{Cc}^{|I|}[PE^If_\mathrm I],
\]
where $\mathrm{Cc}$ is the mapping that returns the complex conjugate of its argument.
\end{proof}
With this property set, we may hence replace $\tilde E$ with $E$ in the expression above the lemma. We also translate from the left with $δ_θ(w)$ in the $L^2$ norm and apply Jensen's inequality, and thus find that the $L^2$-norm above can be estimated from above by
\begin{align*}
  & \sqrt{∫_0^1\,\Bigg\|\,p_m(w^{-1} \go δ_θ(w) \go \ud v)∑_{\substack{a+b≤\mathcal N+1\\b≤a}}\big|(\mathring sS)^a(\mathring z·E)^{\mathcal N+1-a-b}K^J_{\ud r}(\ud v)\big|\Bigg\|_{L^2(B^{*c}×I_k)}^2\!\!\dd θ}  \\
  &\quad \lesssim ∑_{\substack{a+b≤\mathcal N+1\\b≤a}} |w|^{\mathcal N+1+a-b}∑_{|K|=\mathcal N+1+a-b}\big\|p_m(\ud v)D^KK^J_{\ud r}(\ud v)\big\|_{L^2(ℍ^d×I_k)}.
\end{align*}
Reinserting this into the reasoning above, we can deduce that formula \pref{problem foer p = 1} satisfies
\begin{align*}
  &\FOERSTA t_p\jap t^{-2M}2^{k[-cQ(1/p-1/2)+2(c-1)M + m]}  \big\|p_mT_{\ud r}^Ja\big\|_{L^2(B^{*c}×I_k)}\\[.6em]
  &\lesssim ∑_{\substack{a+b≤\mathcal N+1\\b≤a}}t_p\jap t^{-2M}2^{k[-cQ(1/p-1/2)+2(c-1)M + m]+ℓ[Q(1/p-1)+\mathcal N+1+a-b]}\\
  &\TERM×\ ∑_{|K|=\mathcal N+1+a-b}\big\|p_mD^KK^J_{\ud r}\big\|_{L^2(ℍ^d×I_k)}
\end{align*}
In order to show estimate \pref{krav2}, this expression should be estimated from above by $t_p2^{ε'(k+ℓ)}$, and we may set for each $a,b$ that $ε' = \mathcal N + 1 + a - b - Q(1/p - 1)$, which is always positive. It hence suffices to show that for any nonnegative integers $m,M$  with $m ≤ 2M$ and any multi-index $K$,
\begin{equation}\label{stegtillkrav2}
  \bigl\|p_mD^KK^J_{\ud r} \big\|_{L^2(ℍ^d×I_k)} \lesssim \jap t^{2M}2^{k[Q/2 + Q(c-1)(1/p - 1/2) + |K| - c2M + 2M - m]}.
\end{equation}
Notice that requirement \pref{stegtillkrav1} is a special case of this one, with the multi-index $K$ being the empty list.

\subsubsection*{Step 3: Proof of the kernel estimates}
We shall now show estimate \pref{stegtillkrav2}, hence also estimate \pref{stegtillkrav1}. Recall that we divided the operator through time and frequency into a high frequency-time part and one with low frequency-time. These were
\begin{align*}
  T^\mathup H &= φ_t(-∆)χ^\mathup H(\jap t(-∆)^γ)\\
  T^\mathup L &= φ_t(-∆)χ^\mathup L(\jap t(-∆)^γ),
\end{align*}
where $χ^\mathup H$ is supported away from $0$ and $χ^\mathup L$ is supported close to $0$. After we have applied Lemma \ref{utbytarhs}, the objects of study are hence the operators
\begin{align*}
  D^KD^JΦ(r^{-2}(-∆), r^{-2}S)φ_t(-∆)χ^\mathup H(\jap t(-∆)^γ)\\
  D^KD^JΦ(r^{-2}(-∆), r^{-2}S)φ_t(-∆)χ^\mathup L(\jap t(-∆)^γ),
\end{align*}
and in particular their convolution kernels, which we denote $D^KK^{\mathup HJ}_r$ and $D^KK^{\mathup LJ}_r$. We use here that the kernels $K_T$ of left-invariant operators $T$ satisfy $D^KTf = (f \star K_T )\star D^Kδ = f \star (K_T \star D^Kδ) = f \star D^KK_T$, so that we can treat $D^K$ and $D^J$ together.

We are going to use Lemma \ref{stora polynomderivatshs} to replace the mixed derivatives $D^KD^J$ with functions of the sub-Laplacian, and then apply the biradial Parceval's formula and Lemma \ref{nyckelhs}, and treat the two regions of frequency-time differently. We begin by computing the derivative estimates that we shall need. For the high frequency-time region, we see that\label{frekvensuppdelning}
  \begin{align}
  \hspace{2em}&\hspace{-2em}∂_σ^ℓ[σ^jφ_t(σ)χ^\mathup H(\jap tσ^γ)]=\label{hf-derivator}\\[.4em]
  &\ANDRA ∑_{\substack{i+k+n+m = ℓ\\i < ℓ,\, k ≤ j}}C_{i,k,n,m}(1 + σ)^μσ^{(γ-1)(n + m) + j - k - i}\jap t^{n+m}∂^mχ^\mathup H(\jap tσ^γ)\notag\\
  &\lesssim ∑_{\substack{i+k+n+m = ℓ\\i < ℓ,\, k ≤ j}}(\jap tσ^γ)^{i+k}(1 + σ)^μσ^{(γ-1)(n + m) + j - k - i}\jap t^{n+m}∂^mχ^\mathup H(\jap tσ^γ)\notag\\
  &= ∑_{\substack{i+k+n+m = ℓ\\i < ℓ,\, k ≤ j}}\jap t^{i+k+n+m}(1 + σ)^μσ^{(γ-1)(i + k + n + m) + j }∂^mχ^\mathup H(\jap tσ^γ)\notag\\
  &\lesssim \jap t^ℓ(1 + σ)^μσ^{j + ℓ(γ-1)}χ^\mathup H(\jap tσ^γ).\notag
  \end{align}
In the low frequency-time region, we likewise have
\begin{align}
  \hspace{2em}&\hspace{-2em}∂^ℓ_σ\big[σ^jφ_t(σ)χ^\mathup L(\jap tσ^γ)\big] = \label{lf-derivator}\\[.4em]
                     &\ANDRA ∑_{\substack{i+k+n+m = ℓ\\i < ℓ,\, k ≤ j}}C_{i,k,n,m}(1 + σ)^μσ^{(γ-1)(n + m) + j - i - k}\jap t^{n+m}∂^mχ^\mathup L(\jap tσ^γ)\notag\\
                     &= ∑_{\substack{i+k+n+m = ℓ\\i < ℓ,\, k ≤ j}}C_{i,k,n,m}(1 + σ)^μ(\jap tσ^γ)^{n + m}σ^{j - i  - k - m - n}∂^mχ^\mathup L(\jap tσ^γ)\notag\\
                     &\lesssim (1 + σ)^μσ^{j-ℓ}χ^\mathup L(\jap tσ^γ)\notag\\
                     &≤ (1 + σ)^{-|I|/2}σ^{j-ℓ}χ^\mathup L(\jap tσ^γ).\notag
\end{align}
We have not yet included the $Φ$ function in the calculations. To do this, notice firstly that since we want to compute $L^2$ norms, since $|λ| ≤ |λ|(d + 2n)$ on the frequency side and since the dependence on $S$ in $Φ(-∆,S)$ is polynomial, it suffices to  treat the case when $Φ(σ,λ) = Ψ(σ) = σ^N/(1 + σ^{2N})$. For this function, we have for all nonnegative $ℓ$ and real numbers $t$ that
\[
  σ^t\,∂^ℓ_σ\Bigg[\frac{(r^{-2}σ)^N}{1 + (r^{-2}σ)^{2N}}\Bigg] ≤ C_{t,ℓ}\, r^{2(t-ℓ)}\frac{(r^{-2}σ)^{N+t-ℓ}}{1 + (r^{-2}σ)^{2N}}.
\]
We shall choose $N$ sufficiently large for the power $N + t - ℓ$ and the difference $2N - (N + t - ℓ) = N + ℓ - t$ to both become as large as need be. As long as these powers are sufficiently large, the expression on the right hand side above – without the leftmost $r$ factor – retains all the properties we need from it, and this relation therefore allows us to trade any finite power of $σ$ for $r^2$, and vice-verse. In effect, we gain a power $σ^{-1} \simeq r^{-2}$ for each derivative acting on $Ψ(r^{-2}σ)$. In the high frequency region, such factors are estimated from above by $\jap tσ^{γ-1}$, which is not larger than the effect of derivatives acting on $φ_t$. In the low frequency region, factors of $σ^{-1}$ provide the same growth that we already have.

The derivative estimates above are therefore in effect also true in the presence of a factor $Ψ(r^{-2}σ)$, with the change that $Ψ$ is replaced with a function with similar properties. For simplicity, we shall use the symbol $Ψ^*$ for functions of the form $Ψ^*(σ) = σ^n/(1 + σ^{2N})$ with sufficiently large $n$ and $N$.

In order to simultaneously study all the terms given to us by Lemma \ref{stora polynomderivatshs}, we let $L$ denote the concatenation of multi-indices $K$ and $J$, and define the vectors
\begin{align*}
  [m/2] &= \big(\ceil{m/2},\ \ceil{m/2} - 1,\ \floor{m/2}\big)\\
  [m/2]' &= \big(\ceil{m/2},\ \ceil{m/2},\ \floor{m/2}\big)\\
  [|L|/2] &= \big(\ceil{|L|/2},\ \floor{|L|/2}, \ceil{|L|/2} - 1\big)\\
  ν &= (0,0,0)\\
  ν' &= (0,0,1),
\end{align*}
which under the requirement that $m + |L|$ is $(\text{even}, \text{odd}, \text{odd})$ satisfy
\[
  -[m/2] - [m/2]' + 2[|L|/2] + ν + ν' =  \big(|L|-m \big)(1,1,1)
\]
We denote the power of $is - \frac14|z|^2$ in the left part of the inner products of the right hand side terms of Lemma \ref{stora polynomderivatshs} by
\[
  ℓ = ℓ(j) = [m/2] - [|L|/2] + j,
\]
and likewise for $ℓ'$ in the right part, and notice that this number is never larger than $[m/2] ≤ [m/2]' ≤ M$. Then, writing $Ψ(r^{-2}\argd)φ_t = φ_t^\mathup H + φ_t^\mathup L$ with $φ_t^\mathup H(σ) = Ψ(r^{-2}σ)φ_t(σ)χ^\mathup H(\jap tσ^γ)$ and likewise for $φ_t^\mathup L$, and using the shorthand $σ = |λ|(d + 2n)$, we can make use of the derivative estimates \pref{hf-derivator} and \pref{lf-derivator} above, and find in the high frequency-time region that for each nonnegative integer $i ≤ ℓ$,
\begin{align*}
  &\FOERSTA\sup_{|ω|≤2m}\big|(nλ)^i∂^{ℓ+i}[\ud σ^{j+ν}φ_t^\mathup H]\big(|λ|(d + 2n + ω)\big)\big| \lesssim\\[-.2em]
  &\ANDRA σ^i \jap t^{ℓ+i}(1 + σ)^μσ^{j+ν+(ℓ+i)(γ-1)}χ^\mathup H(\jap tσ^γ)Ψ^*(r^{-2}σ)\\
  &\lesssim σ^{j+ν+i} \jap t^{ℓ+i}(1 + σ)^μσ^{(ℓ+i)(γ-1)}χ^\mathup H(\jap tσ^γ)Ψ^*(r^{-2}σ)(\jap tσ^γ)^{2M - ℓ - i}\\
  &= \jap t^{2M}σ^{γ2M-ℓ+j+ν}(1 + σ)^μχ^\mathup H(\jap tσ^γ)Ψ^*(r^{-2}σ)\\
  &= \jap t^{2M}σ^{(2γ-1)M - [m/2] + [|L|/2] + ν}(1 + σ)^μχ^\mathup H(\jap tσ^γ)Ψ^*(r^{-2}σ)\\
  &\lesssim \jap t^{2M}r^{2(μ - cM - [m/2] + [|L|/2] + ν)}Ψ^*(r^{-2}σ),
\end{align*}
where we in the last estimate have chosen $c = 1-2γ$. With this estimate at hand, we find, using Lemma \ref{stora polynomderivatshs}, Lemma \ref{nyckelhs}, \hyperlink{biragdial}{the biradial Parseval's formula}, the vector equation above and the fact $|L| = |K| + |J| = |K| + |I|$, that
\begin{align*}
  &\FOERSTA\big\|p_mD^KK^{\mathup HJ}_{\ud r}\big\|_{L^2(ℍ^d×I_k)}^2
  \lesssim \notag\\
  &∫_ℝ|λ|^d∑_{n=0}^∞n^{d-1}\jap t^{4M}2^{2k(2μ + (1-c)2M + |L| - m)}Ψ^*(2^{-2k}|λ|(d + 2n))^2\de λ\notag\\
  &\lesssim \jap t^{4M}2^{2k(Q/2 + 2μ + |L| - c2M + 2M - m)},
\end{align*}
which is in line with estimate \pref{stegtillkrav2}.

For the low frequency-time region, we have instead
\begin{align*}
  &\FOERSTA\sup_{|ω|≤2m}\big|(nλ)^i∂^{ℓ+i}[\ud σ^{j+ν}φ_t^\mathup L]\big(|λ|(d + 2n + ω)\big)\big|\\
  &\lesssim σ^i σ^{-|I|/2+j+ν-(ℓ+i)}χ^\mathup L(\jap tσ^γ)Ψ^*(r^{-2}σ)\\
  &= σ^{-|I|/2+j+ν-ℓ}χ^\mathup L(\jap tσ^γ)Ψ^*(r^{-2}σ)\\
  &= σ^{-|I|/2 + [|L|/2] - [m/2]+ ν }χ^\mathup L(\jap tσ^γ)Ψ^*(r^{-2}σ)\\
  &\lesssim r^{2(- |I|/2 + [|L|/2] - [m/2]+ ν)}Ψ^*(r^{-2}σ).
\end{align*}
This yields, likewise,
\begin{align*}
  \big\|p_mD^KK^{\mathup LJ}_{\ud r}\big\|_{L^2(ℍ^d×I_k)}^2
  &\lesssim ∫_ℝ|λ|^d∑_{n=0}^∞n^{d-1} 2^{2k(-|I|+|L|-m)}Ψ^*(2^{-2k}|λ|(d + 2n))^2\de λ\\
  &\lesssim 2^{2k(Q/2 + |K|-m)},
\end{align*}
which confirms estimate \pref{stegtillkrav2} for this term if we here set $c = 1$. Estimate \pref{stegtillkrav2} is then provided for, and the proof of Theorem \ref{allmaenna s} for $p < 1$ and empty $I'$ is with that completed.

\subsection{Boundedness on $H^1$ and $\BMO$}
Since $D^Iφ_t(-∆)D^{I'}$ is essentially adjoint to $(-1)^{|I|+|I'|}D^{I'}φ_t(-∆)D^I$, the requested estimate for $\BMO$ will follow by duality once the $H^1$ boundedness is known. Just as for $p < 1$, we will however firstly show this in the special case with empty $I'$, with the necessary additions for nonempty $I'$ dealt with in the upcoming subsection.

The method of the previous subsection works also when $p = 1$, with the exception of the proof of the high frequency estimate \pref{krav1}. The problem resides in the last line of estimates \pref{problem foer p = 1}: The decay in $ℓ$ vanishes, and since this matches the decay in $k$, the sum no longer converges for large $k$.

To come around this problem, we shall utilise that while the mentioned kernel method of estimating the $L^2$ norm of $|·|^{2M}Ta$ gives no $ℓ$-dependence, we can when $M = 0$ also estimate the norm directly with Plancherel's theorem (Lemma \ref{L2-multiplikator}), and this will give a dependence on $ℓ$. In order for this property to help us, we shall subdivide the norm of estimate \pref{krav1} along a certain widening cylinder in position-frequency space.

Let $β = γ - 1/4$, so that $γ - 1/2 < β < γ$, and hence both $γ - β$ and $1 - 2(γ - β)$ are positive. Denote by $b_{ℓ,r}$ the characteristic function of the origo-centred ball in $ℍ^d$ of radius
\(
 \jap t 2^{ℓ[1-2(γ-β)]}r^{2β}.
\)
In order to fulfil estimate \pref{krav1}, we shall find an $ε > 0$ such that
\begin{align*}
  &\Bigg\|\Bigg(∫_{2^k}^{2^{k+1}}|T_ra|^2\db r\Bigg)^{1/2}\Bigg\|_{L^1(B^{*c})} ≤\\
  &\Bigg\|\Bigg(∫_{2^k}^{2^{k+1}} b_{ℓ,r}|T_ra|^2\db r\Bigg)^{1/2}\Bigg\|_{L^1(B^{*c})}  + \Bigg\|\Bigg(∫_{2^k}^{2^{k+1}}\big(1 - b_{ℓ,r}\big)|T_ra|^2\db r\Bigg)^{1/2}\Bigg\|_{L^1(B^{*c})}\\
  &\lesssim 2^{-ε(k+ℓ)}t_1, \quad \text{when $k ≥ -\floor{ℓ}$}.
\end{align*}
Write also $B^γ_k$ for the origo-centred ball of radius $\jap t2^{ℓ[1-2(γ-β)]}2^{k2β}$. With the help of Cauchy–Schwarz' inequality, we then see that
\begin{align*}
  &\FOERSTA \Bigg\|\Bigg(∫_{2^k}^{2^{k+1}} b_{ℓ,r}|T_ra|^2\db r\Bigg)^{1/2}\Bigg\|_{L^1(B^{*c})} =\\
  &\Bigg\|\Bigg(∫_{2^k}^{2^{k+1}} b_{ℓ,r}r^{-γQ}\,r^{γQ}|T_ra|^2\db r\Bigg)^{1/2}\Bigg\|_{L^1(ℍ^d)}\\
  &\lesssim \|1\|_{L^2(B^γ_k)}2^{-kγQ}\big\|\ud r^{γQ}ψ(\ud r^{-2}(-∆))Ta\big\|_{L^2(ℍ^d×I_k)}.
\end{align*}
Lemma \ref{utbytarhs} and Lemma \ref{derivatuskshs} then show that this is estimated from above by
\begin{align*}
    &\jap t^{Q/2} 2^{ℓ[1 - 2(γ-β)]Q/2}2^{-k(γ-β)Q}\big\|\ud r^{γQ}D^IΦ(\ud r^{-2}(-∆),\ud r^{-2}S)φ_t(-∆)a\big\|_{L^2(ℍ^d×I_k)}\\[.4em]
  &\lesssim t_1 2^{-(γ-β)Q(k+ℓ)}2^{ℓQ/2}\big\|\ud r^{γQ}(-∆)^{|I|/2}Φ(\ud r^{-2}(-∆), \ud r^{-2}S)φ_t(-∆)a\big\|_{L^2(ℍ^d×I_k)}.
\end{align*}
We can then use the mentioned properties of $Φ$ to replace its operator with the simpler one with $Ψ^*$, thus finding the upper estimate
\[
  t_1 2^{-(γ-β)Q(k+ℓ)}2^{ℓQ/2}\big\|(-∆)^{|I|/2+γQ/2}Ψ^*(\ud r^{-2}(-∆))φ_t(-∆)a\big\|_{L^2(ℍ^d×I_k)},
\]
and Lemma \ref{L2-multiplikator} can then be used to in turn estimate this from above by
\[
  t_1 2^{-(γ-β)Q(k+ℓ)}2^{ℓQ/2}\|a\|_{L^2(ℍ^d×I_k)}.
\]
Finally, the first atomic property bounds this quantity from above by a constant times $t_12^{-(γ-β)Q(k+ℓ)}$, thereby showing that $(γ - β)Q$ is a candidate for $ε$.

For the second part, we shall again make use of estimate \pref{stegtillkrav1}. Here we note that for $p = 1$, it  reads
\[
  \big\|p_mK^J_{\ud r}\big\|_{L^2(ℍ^d×I_k)} ≤ C_M \jap t^{2M}2^{k[(1-2γ)(Q/2 - 2M) + 2M - m]},
\]
for the high frequency-time part of the operator, and for the low frequency-time part, the same estimate holds with $γ$ put to zero. We may therefore treat both parts by assuming only $γ ≥ 0$. We also note that estimates \pref{foersta T^J} and \pref{problem foer p = 1} show that
\[
  \big\||·|^{2M}T_{\ud r}a\|_{L^2(B^{*c}×I_k)} \lesssim 2^{k(m-2M)}∑_{|J|=|I|}∑_{m=0}^{2M}\big\|p_mK_{\ud r}^J\big\|_{L^2(ℍ^d×I_k)}.
\]
We then take a large integer $M$ and use Cauchy–Schwarz' inequality and the estimates above to find that
\begin{align*}
  &\FOERSTA \Bigg\|\Bigg(∫_{2^k}^{2^{k+1}}\big(1 - b_{ℓ,r}\big)|T_ra|^2\db r\Bigg)^{1/2}\Bigg\|_{L^1(B^{*c})} \lesssim\\
     &\ANDRA\big\||·|^{-2M}\big\|_{L^2(B^{γc}_k)}\big\||·|^{2M}T_{\ud r}a\|_{L^2(B^{*c}×I_k)}\\
                &\lesssim \big(\jap t 2^{ℓ[1-2(γ-β)]}2^{k2β}\big)^{Q/2-2M}2^{k(m-2M)}∑_{|J|=|I|}∑_{m=0}^{2M}\big\|p_mK_{\ud r}^J\big\|_{L^2(ℍ^d×I_k)}\\
                  &\lesssim t_1 2^{-[1-2(γ-β)](2M-Q/2)(k+ℓ)},
\end{align*}
so that $[1-2(γ-β)](2M-Q/2)$ is also a candidate for $ε$. Choosing the smaller of the candidates, we provide the requested estimate for the sum of the two terms.

\subsection{Nonempty $I'$}
To deal with the case when $I'$ is not the empty list, we let the smooth $χ^\mathup L : [0,∞)) → [0,1]$ equal $1$ close to the origin, let $χ^\mathup H = 1- χ^\mathup L$, define the function $g$ from $1/g(σ) = χ^\mathup L(σ) + χ^\mathup H(σ) σ^{-|I'|/2}$ and notice that the operator $\jap t^{-Q(1/p-1/2)}D^Iφ_t(-∆)g(-∆)$ is bounded on $H^p$, by the previous result for empty $I'$. Therefore
\begin{align*}
  \|D^Iφ_t(-∆)D^{I'}a\|_{H^p(ℍ^d)} &\lesssim \jap t^{Q(1/p-1/2)}\|χ^\mathup L(-∆)D^{I'}a\|_{H^p(ℍ^d)} \\
                                   &\TERM+\ \jap t^{Q(1/p-1/2)}\|χ^\mathup H(-∆)(-∆)^{-|I'|/2}D^{I'}a\|_{H^p(ℍ^d)}.
\end{align*}
It hence suffices to treat the two norms in the right-hand side above. Since the operators involved are left invariant, the first parts of the proofs for empty $I'$ can still be used, and the difference first appears when one has to deal with $L^2$-norm expressions with position multiplier; expressions of the form
\[
  \big\||·|^{2m}ψ(r^{-2}(-∆))φ(-∆)D^{I'}a\big\|_{L^2(ℍ^d)},
\]
such as in estimates \pref{foersta T_ra-normen}, with trivial multiplier, and in estimates \pref{foersta T^J}. The idea is then to transform the composition of operators in the right hand side above into something that can be dealt with using the previous techniques. More precisely, we shall give an argument that replaces the application of Lemma \ref{utbytarhs} in the proofs for empty $I'$. In the sequel we shall write $I$ instead of $I'$.

For the low frequency term, we firstly notice that identities \pref{kommutera Laplace och D} show that for any positive integer $M$, there is a polynomial $P$ such that
\[
  χ^\mathup L(-∆)D^I = χ^\mathup L(-∆)P(-∆, S)D^I(1 - ∆)^{-M}.
\]
Arguing like in the proof of Lemma \ref{utbytarhs}, one then finds that there are homogeneous polynomials $p_n,q_{2m-n}$ such that, with $A'_K$, $B'$ and $B_K$ as in the mentioned proof,
\begin{align*}
  &\big\||·|^{2m}ψ(r^{-2}(-∆))χ^\mathup L(-∆) D^Ia\|_{L^2(ℍ^d)} ≤\\
  &∑_{|K| = |I|}∑_{n=0}^m\bigg(\big\|r^{n-2m}p_nD^K(1 -∆)^{-M}A'_K(1 + (-r^{-2}∆)^{2N})^{-1}a\big\|_{L^2(ℍ^d)}\\[-.8em]
  &\hspace{12em}×\ \big\|r^{2m-n}q_{2m-n}χ^\mathup L(-∆)P(-∆, S)e^{r^{-2}∆}δ\big\|_{L^1(ℍ^d)} \\[.4em]
  &\hspace{2em} + \big\|r^{n-2m}p_nD^K(1 - ∆)^{-M}B'(1 - (-r^{-2}∆)^{2N})^{-1}a\big\|_{L^2(ℍ^d)}\\[-.2em]
    &\hspace{10em}×\ \big\|r^{2m-n}q_{2m-n}χ^\mathup L(-∆)P(-∆, S)e^{r^{-2}∆}B_Kδ\|_{L^1(ℍ^d)} \bigg).
\end{align*}
Just like identity \pref{multiplikatorutdragning2}, functions of $-∆$ and $S$ satisfy the general dilation property
\[
    φ(r^{-2}(-∆), r^{-2}S)δ = r^Qφ(-∆,S)δ \circ δ_r,
\]
and we can therefore conclude that
\begin{align*}
  &\big\|r^{2m-n} q_{2m-n}χ^\mathup L(-∆)P(-∆, S)e^{r^{-2}∆}B_Kδ\big\|_{L^1(ℍ^d)} \\
  &=\Bigg\|∑_{j=0}^NC'_{K,j}q_{2m-n}χ^\mathup L(-r^2∆)P(-r^2∆, r^2S)e^∆(-∆)^{N-j}S^jδ\Bigg\|_{L^1(ℍ^d)}
\end{align*}
Then we can multiply and divide with $(1 + |·|^2)^M$ for a large $M$, so that Cauchy–Schwarz' inequality shows that this norm is estimated from above by the sum
\[
 ∑_{j=0}^N\big\|(1 + |·|^2)^M(1 + |·|^{2(2m-n)})χ^\mathup L(-r^2∆)P(-r^2∆, r^2S)e^∆(-∆)^{N-j}S^jδ\big\|_{L^2(ℍ^d)},
\]
and we can do similarly for the first term with an $L^1$-norm above. This expression can then be treated with the \hyperlink{biradial}{biradial Plancherel's theorem} and Corollary \ref{utoekad nyckelhs}. Derivatives on the $χ^\mathup L P$-factor will result in a growth in $r^2$, but because of the factor $χ^\mathup L(r^2σ)$ in the Fourier expansion, these are estimated from above by $σ^{-1}$, and we can assume that $N$ is chosen sufficiently large for this to not be a problem. What remains in the low frequency norm is therefore the factor
\begin{align*}
  &\big\|2^{n-2m}p_nD^K(1 - ∆)^{-M}B'(1 - (-r^{-2}∆)^{2N})^{-1}a\big\|_{L^2(ℍ^d)} \\
  &= \big\|2^{n-2m}p_nD^K(1 - ∆)^{-M}Φ'(-r^{-2}∆, r^{-2}S)a\big\|_{L^2(ℍ^d)},
\end{align*}
following the notation of Lemma \ref{utbytarhs}, and similarly for the first term with $A'_K$. If we take $M ≥ |K|/2$, the rest of the proof for empty $I'$ can then be applied to treat this expression, with $μ = γ = t = 0$.

For the high frequency norm, we write firstly 
\[
  χ^\mathup H(-∆)(-∆)^{-|I|/2}D^I = χ^\mathup H(-∆)(-r^{-2}∆)^{-|I|/2}r^{-|I|}D^I.
\]
We then use the technique from the proof of Lemma \ref{utbytarhs} one more time, so that there are homogeneous polynomials $p_n, q_{2m-n}$ such that
\begin{align*}
  &\big\||·|^{2m}ψ(r^{-2}(-∆))χ^\mathup H(-∆)(-∆)^{-|I|/2} D^Ia\|_{L^2(ℍ^d)} ≤\\
  &∑_{|K| = |I|}∑_{n=0}^m\bigg(\big\|r^{n-2m}p_nr^{-|K|}D^KA'_K(1 + (-r^{-2}∆)^{2N})^{-1}a\big\|_{L^2(ℍ^d)}\\[-.8em]
  &\hspace{10em}×\ \big\|r^{2m-n}q_{2m-n}χ^\mathup H(-∆)(-r^{-2}∆)^{-|I|/2}e^{r^{-2}∆}δ\big\|_{L^1(ℍ^d)} \\[.4em]
  &\hspace{2em} + \big\|r^{n-2m}p_nr^{-|K|}D^KB'(1 - (-r^{-2}∆)^{2N})^{-1}a\big\|_{L^2(ℍ^d)}\\[-.4em]
  &\hspace{8em}×\ \big\|r^{2m-n}q_{2m-n}χ^\mathup H(-∆)(-r^{-2}∆)^{-|I|/2}e^{r^{-2}∆}B_Kδ\|_{L^1(ℍ^d)} \bigg).
\end{align*}
The $L^1$-norms can be treated with an argument similar to the low frequency norm. For the $L^2$-norms, the action of Lemma \ref{stora polynomderivatshs} will turn the operator $r^{-|K|}D^K$ into a part of the frequency concentrator operator, again since $N$ can be assumed to be arbitrarily large, and therefore the rest of the proof techniques can be applied to this norm, again with $μ = γ = t = 0$.

\subsection{Boundedness on Lebesgue spaces}
For the cases when $1 < p < ∞$, we invoke an interpolation argument. As pointed out by P.~C.~Kunstmann and M.~Uhl \cite[Fact \capfigures{3.2}]{KunstmannU}, the complex interpolation property of Hardy spaces on Euclidean space shown by S.~Hofmann, S.~Mayboroda and A.~McIntosh \cite[Lemma \capfigures{4.20}]{HofmannMM} can be extended to the setting of homogeneous groups, such as the Heisenberg group. With the common notation, this means that
\[
  [H^{p_0}(ℍ^d),H^{p_1}(ℍ^d)]_θ = H^p(ℍ^d)
\]
in particular in the range when $1/p = (1-θ)/p_0 + θ/p_1$, $1 ≤ p_0, p_1 ≤ 2$ and $0 < θ < 1$. The Lusin norm of the previous section is instrumental in the construction. For a  proof of the Gaussian estimates mentioned in \cite{KunstmannU}, the reader is referred to \cite[Theorem \capfigures{IV.4.2}]{VaropoulosSC}. 

With this property satisfied, we can use complex interpolation of an analytical family of operators on these Hardy spaces. The following is a special case of the extension of L.~Grafakos  \cite[Theorem \capfigures 3]{Grafakos} of the work of Y.~Sagher \cite{Sagher}. See e.\,g.\@\cite{Sagher} for the details on analytical families of operators. 
\begin{theorem}\label{interpoleringss}
  Let $1 ≤ p_0,p_1 ≤ 2$ and let $z \mapsto T_z$ be an analytical family of operators satisfying that for all $f ∈ \Schwartz(ℍ^d)$, there is a $C_f > 0$ and an $a < π$ such that for all $y ∈ ℝ$ and almost all $v ∈ ℍ^d$,
  \[
    \sup_{0 ≤ x ≤ 1} \log |T_{x + iy}f(v)| ≤ C_f e^{a|y|},
  \]
  and suppose that for all $f ∈ \Schwartz(ℍ^d)$, $\|T_{j+iy}f\|_{L^{p_j}(ℍ^d)} ≤ c_j(y)\|f\|_{H^{p_j}(ℍ^d)}$, $j=0,1$, holds for some $c_j(y)$ that satisfy $\log c_j(y) ≤ Be^{b|y|}$ for some $B > 0$, $b < π$. Then for all $x ∈ (0,1)$, there is an $A_x > 0$ such that for all $f ∈ \Schwartz(ℍ^d)$,
  \[
    \|T_xf\|_{L^p(ℍ^d)} ≤ A_x\|f\|_{L^p(ℍ^d)},
  \]
  where we have $1/p = (1-x)/p_0 + x/p_1$.
\end{theorem}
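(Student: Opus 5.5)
The plan is to run Stein's interpolation argument for analytic families, but on the Hardy space side. The complex interpolation identity $[H^{p_0}(ℍ^d),H^{p_1}(ℍ^d)]_x = H^p(ℍ^d)$ quoted above from \cite{KunstmannU,HofmannMM} will replace the explicit Riesz–Thorin factorisation $|f|^{p/p_z}\mathrm{sgn} f$. Since $1 ≤ p_0,p_1 ≤ 2$ and $0<x<1$, the exponent $p$ lies in $(1,2]$ whenever $p_0$ and $p_1$ are not both $1$, and then $H^p(ℍ^d) = L^p(ℍ^d)$ by definition. So it suffices to show $\|T_xf\|_{L^p} \lesssim \|f\|_{H^p}$ for Schwartz $f$.

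\textbf{Step 1 (duality).} Fix $f ∈ \Schwartz(ℍ^d)$ and a simple function $g$ with $\|g\|_{L^{p'}} = 1$. Write $\|T_xf\|_{L^p} = \sup_g |∫ T_xf\, g|$. Then put
\[
  g_z = |g|^{p'((1-z)/p_0' + z/p_1')}\,\overline{\mathrm{sgn}\, g},
\]
with the convention that the exponent is $0$ when $p_j' = ∞$. This gives $g_x = \overline{\mathrm{sgn}\, g}\,|g|$ up to the obvious normalisation, and $\|g_{j+iy}\|_{L^{p_j'}} ≤ 1$.

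\textbf{Step 2 (analytic family for $f$).} By the definition of the complex method, there is a function $F$, analytic in the open strip and bounded and continuous on the closed strip with values in $H^{p_0}+H^{p_1}$, such that $F(x) = f$ and $\sup_y \|F(j+iy)\|_{H^{p_j}} ≤ 2\|f\|_{[H^{p_0},H^{p_1}]_x} \lesssim \|f\|_{H^p}$. I would first replace $F$ by a function of the form $F(z) = ∑_{k=1}^K φ_k(z) f_k$, with $f_k ∈ \Schwartz(ℍ^d)$ and bounded analytic scalar functions $φ_k$, using the standard density of such functions in the Calderón space. This is possible because $\Schwartz$ is dense in each $H^{p_j}$. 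The approximation costs an arbitrarily small error in $\|F(x)-f\|_{H^p}$, and this error is absorbed at the end by a limiting argument on Schwartz functions.

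\textbf{Step 3 (three lines).} Define
\[
  G(z) = ∑_k φ_k(z)∫ T_z f_k\, g_z .
\]
The hypothesis $\sup_{0≤x≤1}\log|T_{x+iy}f_k(v)| ≤ C_{f_k}e^{a|y|}$, together with the boundedness of $φ_k$ and the finite support and simple structure of $g$, shows that $\log|G(z)| \lesssim e^{a|\Im z|}$ with $a<π$. So $G$ is admissible for Hirschman's three lines lemma. On the boundary lines, Hölder's inequality and the assumed bound give
\[
  |G(j+iy)| ≤ \|T_{j+iy}F(j+iy)\|_{L^{p_j}}\|g_{j+iy}\|_{L^{p_j'}} ≤ c_j(y)\,C\|f\|_{H^p},
\]
where I use linearity of $T_{j+iy}$ on the finite sum. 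Hirschman's lemma then yields
\[
  \log|G(x)| ≤ ∫ ω_0(x,y)\log\bigl(c_0(y)C\|f\|_{H^p}\bigr)\de y + ∫ ω_1(x,y)\log\bigl(c_1(y)C\|f\|_{H^p}\bigr)\de y ,
\]
with Poisson weights $ω_j$ that decay like $e^{-π|y|}$. These integrals converge because $\log c_j(y) ≤ Be^{b|y|}$ with $b<π$. This gives $|∫ T_xf\, g| ≤ A_x\|f\|_{H^p}$, and taking the supremum over $g$ proves the claim.

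The main obstacle I expect is Step 2. The abstract interpolation space gives no pointwise control of $F(z)$, while the growth hypothesis is stated for each fixed Schwartz function. So one must check carefully that the reduction to finite sums $∑φ_kf_k$ is legitimate in the Calderón space of this quasi-Banach/Banach pair. One must also check that the bound $c_j(y)$ still applies once $T_{j+iy}$ acts on these finite sums, which is fine by linearity. This is exactly the point where Grafakos's extension \cite{Grafakos} of Sagher's argument \cite{Sagher} is designed to help, and I would follow its proof, with $ℝ^n$ replaced by $ℍ^d$ and the Euclidean interpolation lemma replaced by the Heisenberg version cited above.
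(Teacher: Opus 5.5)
Your outline matches the paper's reduction: Hirschman's lemma paired against $g_z$, with $[H^{p_0},H^{p_1}]_x=H^p$ in place of the factorisation $|f|^{p/p_z}\mathrm{sgn}\,f$. The paper itself gives no argument; it quotes Grafakos's Theorem 3 together with the Kunstmann--Uhl extension of Hofmann--Mayboroda--McIntosh. The gap is in the one piece you carry out yourself, the end of Step 2. Write $A_j=H^{p_j}$ and $\|F\|_{\mathcal F}=\max_j\sup_y\|F(j+iy)\|_{A_j}$. The three-lines argument only yields $|\int T_xF_\varepsilon(x)\,g_x|\le A_x\|F_\varepsilon\|_{\mathcal F}$ for your finite sums $F_\varepsilon=\sum_k\varphi_kf_k$. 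So it controls $T_xf$ only if $F_\varepsilon(x)=f$ exactly, and when $p_0=1$ that fails for admissible $f$. Every $H^1$ function has integral zero, so $z\mapsto\sum_k\varphi_k(z)\int f_k$ is bounded, analytic and zero on $\mathrm{Re}\,z=0$, hence identically zero. Thus $F_\varepsilon(x)$ always has mean zero, whereas a Schwartz $f$ with $\int f\neq0$ is a legitimate element of $L^p=H^p$. Your ``limiting argument'' would then need $T_xF_\varepsilon(x)\to T_xf$ in some sense, for instance a.e.\ along a subsequence followed by Fatou. That is an a priori continuity of $T_x$, and no hypothesis supplies it: the only quantitative bounds are on the boundary lines, and the $H^1$ bound is vacuous for such $f$. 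Also, density of $\Schwartz$ in each $H^{p_j}$ separately is not enough to replace Calder\'on's coefficients by Schwartz functions. You need density in $A_0\cap A_1$ with the norm $\max_j\|\cdot\|_{A_j}$.

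Here is how to close it. For $f\in\Schwartz\cap A_0\cap A_1$ (all Schwartz $f$ when $p_0,p_1>1$, mean-zero ones otherwise) no limit is needed. By Stafney's lemma, for $a\in A_0\cap A_1$ the $[A_0,A_1]_x$-norm is the infimum of $\|F\|_{\mathcal F}$ over finite sums $F=\sum_k\varphi_ka_k$ with $a_k\in A_0\cap A_1$ and $F(x)=a$ exactly. To see this, subtract $e^{(z-x)^2}a$, divide by a conformal map $\beta$ of the strip onto the disc with $\beta(x)=0$, approximate the quotient in Calder\'on's class, and multiply back. Then replace the $a_k$ by nearby Schwartz $\tilde a_k$ and add $e^{(z-x)^2}\bigl(f-\sum_k\varphi_k(x)\tilde a_k\bigr)$. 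This correction is Schwartz, small in $A_0\cap A_1$, and restores the value $f$ at $x$; your Steps 1 and 3 then go through unchanged. For Schwartz $f$ with $\int f\neq0$ an extra continuity input is unavoidable in your scheme. In the paper's application it is available, since $T_x=\Phi_t^x(-\Delta)$ is bounded on $L^2$ by Lemma \ref{L2-multiplikator}. Take $\phi_0$ Schwartz with $\int\phi_0=1$. Then $f-(\int f)\,R^{-Q}\phi_0\circ\delta_{1/R}$ has mean zero and tends to $f$ in $L^2\cap L^p$ as $R\to\infty$, so Fatou concludes. For the abstract statement, this is exactly the part that must be taken from Grafakos's theorem, which is all the paper does.
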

To apply this theorem with $p_0 = 1$, $p_1 = 2$, suppose that $φ_t^*$ satisfies for each $t ≥ 0$ that
\[
  |∂^mφ_t^*(σ)| ≤ C_m\jap t^m(1 + σ)^{-|I|/2-|I'|/2}σ^{m(γ-1)}
\]
and form $T_z = Φ_t^z(-∆)$ with
\[
  Φ_t^z(σ) = φ_t^*(σ)(1+σ)^{-Qγ((1-z)/1 + z/2 -1/2)} = φ_t^*(σ)(1+σ)^{-Qγ(1-z)/2}.
\]
It is clear that all $φ_t$ of Theorem \ref{allmaenna s} with $p ∈ (1,2)$ are of the form $φ_t = Φ_t^x$ with $1/p = (1-x)/1 + x/2$, simply from choosing $φ_t^*(σ) := φ_t(σ)(1 + σ)^{Qγ(1-x)/2}$ and keeping in mind that $γ ≥ 0$. These operators form an analytical family, and we can take a large integer $M$ and use Young's inequality together with Lemma \ref{L2-multiplikator} and Lemma \ref{kaerna aer L2} to show that
\begin{align*}
  |T_zf(v)| &≤ \|(1-∆)^Mf \star T_z(1-∆)^{-M}δ\|_{L^∞(ℍ^d)}\\
  &≤ C_{f,M}\|Φ_t^z(-∆)(1-∆)^{-M}δ\|_{L^2(ℍ^d)}\\
  &≤ C_f\|Φ_t^z\|_{L^∞([0,∞))} ≤ C_f\sup_σ\,(1 + σ)^{-Qγ(1-x)/2} ≤ C_f,
\end{align*}
for $0 ≤ x ≤ 1$. We can find the bounds $c_j(y)$ by following the proof of the boundedness on $H^1$ and $L^2$. Doing so, one finds that the growth in $y$ originates only from derivatives on $Φ_t^z$; we have the polynomial growth of $y^m$ for $∂^mΦ_t^z$. Consequently, since we need only a finite number of derivatives on $Φ_t^z$ for the proof of the $H^1$ boundedness, and none for the $L^2$ one, the requirement $c_j(y) ≤ Be^{b|y|}$ is satisfied.

We can therefore apply the theorem and conclude the requested $L^p$ boundedness of $Φ_t^x$ for $p ∈ (1,2)$. The cases with $p ∈ (2,∞)$ follow from duality.

\section{Applications}\label{tillaempningsavdelning}

\begin{proof}[Proof of Theorem \ref{pd-s}]
With the general sub-Laplacian multipliers from Def\-inition \ref{def multiplikator}, we can construct the operator $e^{it(-∆)^ν}$ and deduce that it solves the Schrödinger system of equations.

From Theorem \ref{maottlig multiplikator-s} and the homomorphism property of sub-Laplacian multipliers, we know that Bessel potentials $(1 - ∆)^{r/2}$ of all real orders $r$ are invertible on $\Schwartz(ℍ^d)$. It hence suffices to show that $φ_t(-∆) = e^{it(-∆)^ν}(1 - ∆)^{-r/2}$ with $r = 2νQ|1/p - 1/2|$ is bounded on $H^p(ℍ^d)$ and $\BMO(ℍ^d)$.

The operator $φ_t(-∆)$ can be separated into low and high frequency-time parts, each satisfying the requirements of Theorem \ref{allmaenna s}. To be precise, let $χ^\mathup L : \Smooth_c([0,∞))$ equal $1$ close to $0$ and set $χ^\mathup H = 1 - χ^\mathup L$. Then $φ_t = φ_t^\mathup L + φ_t^\mathup H$ with $μ = -r/2$, $\jap t = 1 + t$ and
\begin{align*}
  φ_t^\mathup L(σ) &= e^{itσ^ν}(1 + σ)^μχ^\mathup L(\jap tσ^ν)\\
  φ_t^\mathup H(σ) &= e^{itσ^ν}(1 + σ)^μχ^\mathup H(\jap tσ^ν).
\end{align*}
The necessary derivative estimates of such functions were computed in detail in estimates \pref{hf-derivator} and \pref{lf-derivator}, if one sets $j = 0$, and from these we find that $φ_t^\mathup H$ satisfies the condition of Theorem \ref{allmaenna s} with $γ = ν$ and $|I| = |I'| = 0$ with the choice
\[
    μ = -νQ|1/p - 1/2| = -r/2.
\]
And even with this power fixed, $φ_t^\mathup L$ satisfies the condition of Theorem \ref{allmaenna s} with $γ = 0$, $μ = 0$ and $|I| = |I'| = 0$. The theorem can hence be applied also to the low frequency-time operator, and Theorem \ref{pd-s} follows.
\end{proof}

\begin{proof}[Proof of Theorem \ref{Besselpotentials}]
  Per the comment after Theorem \ref{allmaenna s}, we need only show the leftmost estimate
  \[
    \|(1 - ∆)^{n/2}f\|_{H^p(ℍ^d)} \lesssim ∑_{|I| ≤ n}\|D^If\|_{H^p(ℍ^d)}
  \]
  for integer $n$, and likewise for $\BMO$. This is trivial for even $n$. For the cases when $n$ is odd, we use that
  \[
    (1 - ∆)^{n/2} = ∑_{i=1}^{2d}(1 - ∆)^{-1/2}\Big(\textstyle\frac1{\sqrt{2d}} + E_i\Big)\Big(\textstyle\frac1{\sqrt{2d}} - E_i\Big)(1 - ∆)^{(n-1)/2},
  \]
  and therefore, since the boundedness of the operators $(1 - ∆)^{-1/2}(1/\sqrt{2d} + E_i)$ is shown by Theorem \ref{allmaenna s}, the requested estimate follows.
\end{proof}

\subsubsection*{Acknowledgements} I am greatly indebted to Isabelle Gallagher for many enlightening and encouraging discussions, which have been instrumental in my progress and understanding.

I am also deeply grateful to the Wenner-Gren Foundations, which have funded this research. The larger part of this project was carried out while I was working at the École normale supérieure in Paris.

\printbibliography

{\itshape\noindent
  Aksel Bergfeldt\\
  Institut de Mathématiques de Jussieu-Paris Rive Gauche\\
  Université Paris Cité, Bâtiment Sophie Germain, Boite Courrier 7012\\
  8 Place Aurélie Nemours\\
  75205 Paris – Cedex 13}\\[.4em]
E-mail: \textit{aksel.bergfeldt@imj-prg.fr}

\end{document}